\documentclass[11pt]{amsart}

\usepackage{amsmath,amssymb,amsthm,amsfonts,mathtools}
\usepackage[shortlabels]{enumitem}
\setlist{font=\normalfont}
\usepackage{microtype}
\usepackage{cite}
\usepackage[hidelinks]{hyperref}
\usepackage{url}
\newtheorem{theorem}{Theorem}[section]
\newtheorem{proposition}[theorem]{Proposition}
\newtheorem{lemma}[theorem]{Lemma}
\newtheorem{corollary}[theorem]{Corollary}

\theoremstyle{definition}
\newtheorem{definition}[theorem]{Definition}

\theoremstyle{remark}
\newtheorem{remark}[theorem]{Remark}

\newcommand{\F}{\mathbb F}

\newcommand{\HD}[1]{\mathcal D^{(#1)}}

\title{A complete classification of minimal value set polynomials}

\author{Herivelto Borges}
\address{Instituto de Matem\'atica e Estat\'istica, Universidade de S\~ao Paulo\\
Rua do Mat\~ao, 1010, S\~ao Paulo, SP 05508-090, Brazil}
\email{hborges@ime.usp.br}

\thanks{The first author was supported by CNPq (Brazil), grants 406377/2021-9 and 310194/2022-9, and by FAPESP (Brazil), grant 2022/15301-52.}

\author{Lucas Reis}
\address{Departamento de Matem\'atica, Universidade Federal de Minas Gerais, UFMG, Belo Horizonte, MG 30270-901, Brazil}
\email{lucasreismat@gmail.com}
\thanks{The second author was supported by CNPq (Brazil), grants 310583/2025-0 and 420721/2025-8, by FAPESP (Brazil), grant 25/21886-4 and by FAPEMIG (Brazil), grant APQ-02427-26.}

\begin{document}

\begin{abstract}
We give a complete classification of minimal value set polynomials (MVSPs),
settling a problem introduced by Carlitz, Lewis, Mills and Straus in 1961.
For MVSPs $F \in \mathbb{F}_q[x]$ with more than two values, earlier work showed that their value
sets $\left\{F(a): a \in \mathbb{F}_q\right\}$ can be reduced to the form \(\mathcal U^v=\{u^v:u\in \mathcal U\}\), where \(\mathcal U\) is a  vector space
over a subfield of \(\mathbb F_q\), and \(v\geq 1\) is a suitable integer. This classification proceeds through two parallel constructions,
according to whether $\mathcal U$ is itself a field or not, governed respectively by the cyclic multiplicative structure and by the additive linearized structure. In both cases, Hasse derivatives impose constraints on the root multiplicities
of a distinguished factor of $F$, leading to a factorization that preserves
the underlying structure in a polynomial of smaller degree and thus allows
the argument to be iterated.
\end{abstract}

\maketitle

\section{Introduction}

Let \(q\) be a prime power, and let \(F\in\F_q[x]\) be a nonconstant
polynomial of degree \(d<q\). Its \emph{value set}
\(V_F:=\{F(a):a\in\F_q\}\) satisfies
\begin{equation}\label{eq:range}
\left\lceil\frac{q}{\deg F}\right\rceil
\leq \#V_F\leq q.
\end{equation}
A fundamental problem is to determine which cardinalities occur in this range and how \(V_F\) constrains the structure of \(F\),
with the ultimate goal of classifying  the polynomials with a prescribed value-set size.

The problem of determining the number of values assumed by a polynomial
over a finite field goes back at least to the work of Carlitz and  Birch
and Swinnerton-Dyer's study of a problem of Chowla
\cite{Carlitz1955,BSD}. Subsequent work showed that these cardinalities
are highly constrained. Cohen proved, as a consequence of Chebotarev
density, that for every fixed positive integer \(d\) there is a finite
set \(T_d\subset\mathbb Q_{>0}\) such that for every prime power $q$ and every
\(F\in\F_q[x]\) of degree \(d\)
\[
\#V_F=c_Fq+O_d(\sqrt q)
\]
for some \(c_F\in T_d\) \cite{Cohen,MWW}. Thus, even asymptotically,
the proportion \(\#V_F/q\) can take values only near a finite set of
possibilities.

Different parts of this spectrum have been studied by several methods.
Wan, Shiue and Chen obtained general upper and lower bounds for
\(\#V_F\) in terms of polynomial invariants \cite{WSC}, while Guralnick
and Wan used permutation-group methods to obtain stronger restrictions
for non-permutation polynomials, including asymptotic bounds when the
degree is prime to the characteristic \cite{GW}. More refined problems,
in which one prescribes not only the value set but also the
multiplicities of its elements, have also been considered \cite{MRW}.
Near the lower end of the range, Gomez-Calderon and Madden gave a
complete list of polynomials in \(\F_q[x]\) of degree \(d<q^{1/4}\)
whose value set has size less than twice the minimum possible
\cite{Cald2}.

The two extremal values of the range \eqref{eq:range} give rise to two
distinguished classes. At the upper endpoint, \(\#V_F=q\), one obtains
permutation polynomials. The related theory of exceptional polynomials
has led to classification results involving arithmetic and geometric
monodromy and finite permutation groups. In particular, Guralnick and
Zieve classified polynomials with prescribed
\(\operatorname{PSL}_2\)-monodromy under a ramification hypothesis and,
together with the companion work of Guralnick, Rosenberg and Zieve,
completed the classification of indecomposable exceptional polynomials
whose degree is not a power of the characteristic \cite{GZ,GRZ}.

The polynomials attaining equality at the lower endpoint of \eqref{eq:range}, that is, the nonconstant polynomials \(F\in\F_q[x]\) for which
\[
\#V_F=\left\lceil\frac{q}{\deg F}\right\rceil,
\]
are called \emph{minimal value set polynomials}, or MVSPs. The primary goal of this paper is to complete their classification over arbitrary finite fields by settling the case of more than two values. 

These polynomials can be viewed as polynomial maps with exceptionally large rational fibers, in analogy with the highly split fibers used over number fields to construct curves with many rational points \cite{Elkies}. They are also connected with Frobenius nonclassical curves, Castle curves and certain recursive towers of function fields, with applications to finite geometry and algebraic-geometric coding theory \cite{Bor0,Bor1,Toledano,GPTU,MunueraSepulvedaTorresCastle}. One consequence of our classification is a complete solution to the problem of determining the Frobenius nonclassical components of curves of the form $F(x)=G(y)$ (see Section~\ref{sec:fnc}).

The systematic study of MVSPs was initiated by Carlitz, Lewis, Mills and Straus in 1961, in connection with a generalization of Waring's problem modulo a prime \cite{CLMS}. Mills subsequently developed a  structural theory and suggested that the families then known might already exhaust all possible MVSPs \cite{Mills}.

For a nonempty subset $S\subseteq\F_q$, write
\[
\mathcal P(S,q)
=
\{F\in\F_q[x]:F\text{ is an MVSP and }V_F=S\}.
\]
These two pioneering works determined the sets $\mathcal P(S,q)$ for $\#S\geq\sqrt q$ and established a complete classification over $\F_p$ and $\F_{p^2}$, where $p$ is prime. The case $\#S\leq 2$, already discussed in \cite{CLMS}, is essentially trivial and will not be treated here.

Despite these advances, the general classification problem remained open and was explicitly formulated in \cite[Problem~8.3.10]{Mullen}.

Substantial progress was obtained for several families. In 2013, Borges and Concei\c{c}\~ao described the sets $\mathcal P(\F_{q'},q)$ for subfields $\F_{q'}\subseteq\F_q$. They also constructed new families with other prescribed value sets, showing that the families suggested by Mills did not exhaust all MVSPs \cite{Bor2}. Further cases were subsequently settled, including the complete classification over fields of size $p^3$ \cite{BR21}.

A recent different approach, due to Deng, Xu and Zhu, uses arithmetic and geometric monodromy groups \cite{DXZ}. They study the Galois subclass of \emph{absolutely minimal value set polynomials} and replicate, via monodromy, classification results of Mills and of Borges and Concei\c{c}\~ao. This provides a group-theoretic explanation of those results, but does not determine $\mathcal P(S,q)$ in general. In the remaining range $\#S<\sqrt q$, a complete classification must also account for polynomials outside the Galois subclass.

More recently, a structural reformulation of the problem was
given in \cite{BR}. There, it was shown that every subset
$S\subseteq\F_q$ with $\#S>2$ and $\mathcal P(S,q)\ne\emptyset$
has the form
\[
a\cdot \mathcal U^v+b:=\{au^v+b:u\in  \mathcal U\},
\qquad a\in\F_q^\times,\quad b\in\F_q,
\]
where $\mathcal U\subseteq\F_q$ is a vector space over a subfield $\F_{q'}\subseteq \F_q$ with $1\in \mathcal U$ and $v$ divides $q'-1$. After an affine change
of values, the problem is, therefore, to describe
$\mathcal P(\mathcal U^v,q)$ for $\mathcal U,v$ as above. The space $\mathcal U$ provides
the additive structure, while the passage to $\mathcal U^v$ introduces
the multiplicative one. Determining the value sets, however, does not itself characterize the polynomials realizing them.

That work \cite{BR} also provided a complete description of
$\mathcal P(\mathcal U,q)$ and conjectured that the remaining
cases could be reduced to MVSPs with simpler value sets.
More precisely, every polynomial in
$\mathcal P(\mathcal U^v,q)$ was conjectured to have the form
$H^t$, where $H$ is another MVSP with a simpler value set
and $t$ is a suitable integer. The main difficulty is
therefore to translate the additive and multiplicative
structure of the value set into a corresponding
factorization of each $F\in\mathcal P(\mathcal U^v,q)$.

In this paper, we prove this conjecture
(Theorem~\ref{thm:conjecture}). As a consequence, we derive
the following explicit characterization of all MVSPs.

\begin{theorem}\label{thm:explicit}
Let $q=p^n$ be a prime power and $F\in\F_q[x]$
with $\#V_F>2$. Then $F$ is an MVSP if and only if,
up to affine changes of variables and values, it has the form
\[
F(x)=A(P(x))^v,
\]
where
\begin{enumerate}[{\upshape(i)}]
    \item \(P\in\F_q[x]\) is an MVSP whose value set is a
    subfield \(\F_{q'}\subseteq\F_q\);

    \item for some subfield \(\F_{p^k}\subseteq\F_{q'}\),
    \(A(x)=\prod_{w\in W}(x-w)\), where
    \(W\subseteq\F_{q'}\) is an \(\F_{p^k}\)-vector subspace;

    \item \(v\mid(p^k-1)\) and \(2v\deg A<q'\).
\end{enumerate}
In this case,
$\#V_F
=
1+\frac{q'/\deg A-1}{v}.$
\end{theorem}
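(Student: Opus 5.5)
Two directions are needed, and I would take them in turn, relying on the structural reformulation from \cite{BR} together with the factorization result (Theorem~\ref{thm:conjecture}) that the paper establishes later.

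\emph{Sufficiency.} Let $F=A(P)^v$ satisfy (i)--(iii). First compute the value set. Since $V_P=\F_{q'}$ and $A$ is the subspace polynomial of $W$, the map $A$ restricted to $\F_{q'}$ is $\F_{p^k}$-linear with kernel $W$. Hence $A(\F_{q'})=\mathcal U$ is an $\F_{p^k}$-subspace with $\#\mathcal U=q'/\deg A$. We have $v\mid p^k-1$, and $\mathcal U^\times$ is a union of $\F_{p^k}^\times$-cosets. It follows that $\mathcal U^v$ consists of $0$ together with $(\#\mathcal U-1)/v$ nonzero values, which gives the stated formula $\#V_F=1+\frac{q'/\deg A-1}{v}$.

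Next check minimality, i.e.\ $\#V_F=\lceil q/\deg F\rceil$ with $\deg F=v\deg A\deg P$. Because $P$ is an MVSP onto a field $\F_{q'}$ with $q'>2$, we have $\lceil q/\deg P\rceil=q'$. Write $q=q'\deg P-r$ with $0\le r<\deg P$. Then
\[
\frac{q}{v\deg A\deg P}=\frac{q'}{v\deg A}-\frac{r}{v\deg A\deg P}.
\]
We must show this quantity lies in $(N-1,N]$, where $N=1+\frac{q'/\deg A-1}{v}$. Equivalently, we need
\[
\frac{q'}{v\deg A}\le N+\frac{r}{v\deg A\deg P}\quad\text{and}\quad N-1<\frac{q'}{v\deg A}-\frac{r}{v\deg A\deg P}.
\]
The first inequality is immediate, since $\frac{q'}{v\deg A}=N-1+\frac1v$. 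The second reduces to $r/\deg P<1$, which holds. The hypothesis $2v\deg A<q'$ is used to ensure $\deg F<q$ and $\#V_F>2$. I expect a routine check of $\#V_F=\lceil q/\deg F\rceil$ via this decomposition to suffice.

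\emph{Necessity.} Let $F$ be an MVSP with $\#V_F>2$.
\begin{enumerate}[{\upshape(1)}]
\item By \cite{BR}, after an affine change of values we may assume $V_F=\mathcal U^v$, where $\mathcal U$ is an $\F_{p^k}$-space containing $1$ and $v\mid p^k-1$.
\item Theorem~\ref{thm:conjecture} then gives $F=H^v$ (up to the normalization), where $H$ is an MVSP with $V_H=\mathcal U$.
\item The description of $\mathcal P(\mathcal U,q)$ from \cite{BR} writes $H=A(P)$. Here $\F_{q'}$ is the field generated by $\mathcal U$, which I would take to be the smallest field containing $\mathcal U$ over which the construction works. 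Also $P\in\mathcal P(\F_{q'},q)$, as classified by Borges--Concei\c{c}\~ao, and $A$ is a subspace polynomial whose kernel $W$ satisfies $\F_{q'}/W\cong\mathcal U$ via $A$.
\item Finally, derive $2v\deg A<q'$ from $\#V_F>2$ and the value count.
\end{enumerate}

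The main obstacle is step (2), the factorization $F=H^v$. It is exactly the conjecture of \cite{BR}, and I would simply invoke Theorem~\ref{thm:conjecture}. Beyond that, the delicate part is matching the precise form of the classification of $\mathcal P(\mathcal U,q)$ in \cite{BR} with the shape $A(P)$, including $W\subseteq\F_{q'}$ and the compatibility of the subfield $\F_{p^k}$ with $v$.
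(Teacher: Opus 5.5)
\textbf{Where the proposal holds up.} Your sufficiency direction is correct and is essentially the paper's argument. You count $\#A(\F_{q'})=q'/\deg A$ using the $\F_{p^k}$-linear map $z\mapsto A(z)$ with kernel $W$. Each nonzero fiber of $z\mapsto z^v$ on $\mathcal U$ has exactly $v$ elements, because the $v$-th roots of unity lie in $\F_{p^k}^\times$. You then squeeze $q/\deg F$ into $(N-1,\,N-1+1/v]$. The lower bound actually needs $q'\deg P-q<\deg A\deg P$, which follows from $q'\deg P-q<\deg P$. Your necessity argument also matches the paper when $\mathcal U$ is not a field: apply Theorem~\ref{thm:conjecture}(i), then Theorem~\ref{thm:subspace} with Remark~\ref{rem:ad-div} to write $H=A(P)$, then read off $2v\deg A<q'$ from the value count.

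\textbf{The gap: step (2) when $\mathcal U$ is a field.} In that case Theorem~\ref{thm:conjecture} does not give $F=H^v$ with $V_H=\mathcal U$. Part (ii) instead gives $F=f^{(p^e-1)/r}$, where $r+1=\#\mathcal U^v$, $f\in\mathcal P(\F_{p^e},q)$, and $\F_{p^e}$ is the smallest subfield containing $\mathcal U^v$. This field can be strictly smaller than $\mathcal U$, and then both the exponent and the value field change.

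Here is a concrete failure. Take $q=9$ and $F=x^3+x\in\F_9[x]$, the trace to $\F_3$. Then $V_F=\F_3=\{0,\pm1\}=\mathcal U^{4}$ with $\mathcal U=\F_9$, $k=2$, $v=4$. This is an admissible representation in Lemma~\ref{lem:br-1}. Yet $\deg F=3$, so $F$ cannot be $H^4$. Your step (3) would then output $q'=9$, $A(x)=x$ and $F=P^4$, which is impossible.

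\textbf{How to repair it.} Do what the paper does and split into cases. When $\mathcal U$ is a field, take:
\begin{itemize}
\item $P=f\in\mathcal P(\F_{p^e},q)$;
\item $q'=p^e$ and $k=e$;
\item $W=\{0\}$, so $A(x)=x$;
\item $v=(p^e-1)/r$.
\end{itemize}
Then $2v<p^e$ follows from $\#V_F=(p^e-1)/v+1>2$. Alternatively, first renormalize the representation so that a field $\mathcal U$ is the smallest field containing $\mathcal U^v$; after that your uniform statement in step (2) becomes true. Either way this has to be said explicitly, because Lemma~\ref{lem:br-1} does not make that choice for you.
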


The MVSPs with subfield value sets have already been
explicitly classified (Theorem~\ref{thm:b-c}),
while the remaining ingredients in the theorem are
vector subspaces and power maps. Thus
Theorem~\ref{thm:explicit} provides a complete
description of all MVSPs over any given finite field.

Our proof does not rely on the existing classification of
MVSPs in the range $\# S\ge \sqrt q$. Its starting point is
Mills's partial decomposition, together with the
differential equation
\[
T(F)=\theta(x^q-x)F',
\]
where $\theta\in\F_q^\times$, and
$T(x)=\prod_{a\in V_F}(x-a)$ is the vanishing
polynomial of the value set of an MVSP $F\in\F_q[x]$.

The method for solving this differential equation is developed
in Section~\ref{sec:mills-equation}, where we introduce the notion of
\emph{covering systems} and establish a more general result
that includes the present setting as a special case.

\section{Explicit descriptions of  fundamental MVSPs}
We begin with MVSPs whose value set is a subfield of $\F_q$. The results of Borges and Concei\c{c}\~ao in~\cite{Bor2} yield the following description of these polynomials in terms of traces of monomials.

\begin{theorem}\label{thm:b-c}
Let $q=p^n$ be a prime power, fix $e|n$, and set $s=p^e$. If $s>2$, then $\mathcal P(\F_{s}, q)$ comprises the nonconstant polynomials $F\in \F_{q}[x]$ of degree $\le q-1$ such that 
$$F(x)\equiv \sum_{g\in \mathcal D}\mathrm{Tr}_{n/e}(a_gx^{g})\pmod {x^q-x},$$
where $a_g\in \F_q$, $\mathrm{Tr}_{n/e}(x)=\sum_{i=0}^{n/e-1}x^{s^i}$ denotes the trace map from $\F_{q}$ to $\F_s$ and 
$\mathcal D=\{\varepsilon_0 s^0+\cdots +\varepsilon_{n/e-1}s^{n/e-1}:\varepsilon_i=0, 1\}$ is the set of nonnegative integers less than $q$ whose $s$-adic expansions contain only the digits $0$ and $1$.
\end{theorem}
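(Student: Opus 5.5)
The plan is to prove both inclusions directly from the definition of an MVSP, using only degree bounds and the Frobenius action on exponents modulo $q-1$. The differential equation and Mills's decomposition are not needed. Put $m=n/e$, so that $q=s^m$, and set $d_0:=1+s+\cdots+s^{m-1}=(q-1)/(s-1)$. Note that $d_0$ is the largest element of $\mathcal D$, and that $d_0<q-1$ because $s>2$.

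\emph{Sufficiency.} Let $F$ be nonconstant of degree $\le q-1$ with $F\equiv\sum_{g\in\mathcal D}\mathrm{Tr}_{n/e}(a_gx^g)\pmod{x^q-x}$.
\begin{itemize}
\item Since $\deg F\le q-1$, $F$ equals this reduced form.
\item Each $\mathrm{Tr}_{n/e}(a_gx^g)$ reduces to monomials $x^{s^ig \bmod (q-1)}$. The residue $s^ig \bmod (q-1)$ is a cyclic rotation of the $s$-digits of $g$, so it again lies in $\mathcal D$. Hence $\deg F\le d_0$.
\item Every term takes values in $\F_s$, so $V_F\subseteq\F_s$.
\item By \eqref{eq:range}, $\#V_F\ge\lceil q/\deg F\rceil\ge\lceil q(s-1)/(q-1)\rceil=s$, since $q(s-1)/(q-1)>s-1$.
\end{itemize}
Therefore $V_F=\F_s$, and equality holds throughout, so $F$ is an MVSP.

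\emph{Necessity.} Let $F\in\mathcal P(\F_s,q)$ have degree $d\le q-1$.

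Step 1 (degree bound). From $\lceil q/d\rceil=s$ we get $q/d>s-1$, i.e. $d<q/(s-1)=d_0+1/(s-1)$. Since $d$ is an integer, $d\le d_0$.

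Step 2 (Frobenius invariance of coefficients). Since $F(a)\in\F_s$ for all $a\in\F_q$, we have $F^s\equiv F\pmod{x^q-x}$. Write $F=\sum_j c_jx^j$ with $j\le d_0<q-1$. Reducing $F^s$ modulo $x^q-x$ and comparing coefficients gives $c_{sj\bmod(q-1)}=c_j^{\,s}$ for $j\ge1$, and $c_0\in\F_s$. Two points need care here:
\begin{itemize}
\item Exponents $\equiv0 \pmod{q-1}$ could land on either $0$ or $q-1$. Since $x^{q-1}$ does not occur ($d_0<q-1$), no such collision happens.
\item Consequently, the support of $F$ is a union of orbits of $j\mapsto sj \bmod (q-1)$, i.e. of cyclic rotations of $s$-digit strings. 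On each orbit the coefficients form a Frobenius-compatible family.
\end{itemize}

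Step 3 (digit condition). Take $j$ in the support and suppose some $s$-digit of $j$ is $\ge2$. Some rotation of $j$, which is also in the support, has that digit in the top position, so it is $\ge2s^{m-1}$. But $2s^{m-1}>d_0$, because $2s^{m-1}(s-1)>s^m-1$ holds for $s\ge2$. This contradicts Step 1, so the support lies in $\mathcal D$.

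Step 4 (grouping into traces). Take an orbit of $g\in\mathcal D$ of length $\ell\mid m$, so $c_g\in\F_{s^\ell}$. Then
\[
\sum_{i<\ell}c_g^{s^i}x^{s^ig}\equiv\mathrm{Tr}_{n/e}(a_gx^g)
\]
for a suitable $a_g$ with $\mathrm{Tr}_{s^m/s^\ell}(a_g)=c_g$. Such an $a_g$ exists because the relative trace is surjective. The constant term is handled by $g=0$, since $\mathrm{Tr}_{n/e}(a_0)$ ranges over $\F_s$.

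The main obstacle is Step 2's bookkeeping at the exponent $q-1$ and the orbit/trace correspondence when the orbit is shorter than $m$. Both are resolved by $d_0<q-1$ and by the surjectivity of the relative trace.
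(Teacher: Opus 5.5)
Your proof is correct. The sufficiency half matches the paper's: the degree bound $\deg F\le (q-1)/(s-1)$ comes from rotation-invariance of $\mathcal D$, the values lie in $\F_s$, and the lower bound in \eqref{eq:range} does the rest.

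For necessity you take a different route.
\begin{itemize}
\item The paper gets the key digit condition (every exponent of $F$ lies in $\mathcal D$) by citing Lemma~4.6 of \cite{Bor2}. It then produces the trace form in one line: choose $\beta$ with $\mathrm{Tr}_{n/e}(\beta)=1$ and note $F\equiv\mathrm{Tr}_{n/e}(\beta F)=\sum_g\mathrm{Tr}_{n/e}(\beta c_gx^g)$.
\item You derive the digit condition from scratch. The congruence $F^s\equiv F\pmod{x^q-x}$ gives $c_{\sigma(j)}=c_j^s$ for all $1\le j\le q-2$, where $\sigma(j)=sj \bmod (q-1)$ permutes that range, so the support is a union of digit-rotation orbits. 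Combined with $\deg F\le (q-1)/(s-1)<2s^{m-1}$, this rules out any digit $\ge 2$.
\end{itemize}

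What your approach buys is a self-contained statement. It also shows that the digit condition follows from nothing more than the MVSP degree bound and $V_F\subseteq\F_s$.

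Your Step~4 (orbit lengths, relative traces) is more bookkeeping than needed. Once the support lies in $\mathcal D$, the paper's $\beta$-trick gives the representation immediately with $a_g=\beta c_g$.

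The caveat in Step~2 about exponents landing on $0$ versus $q-1$ is also moot. Since $\gcd(s,q-1)=1$, we have $sj\not\equiv 0\pmod{q-1}$ for $1\le j\le q-2$, so only $j=0$ reduces to the constant term.
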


\begin{proof}
Let $F\in \F_q[x]$ be a polynomial satisfying the above. The congruence above easily yields the inequalities
$\frac{q}{s}\le \deg(F)\le \frac{q-1}{s-1}$. In particular, $\# V_F\ge \left\lceil\frac{q}{\deg F}\right\rceil= s$. Moreover, since the image set of $\mathrm{Tr}_{n/e}(a_gx^{g})$ over $\F_q$ lies in $\F_{s}$, we obtain $V_F\subseteq \F_s$. Thus $F\in \mathcal P(\F_{p^e}, q)$.

Conversely, let $F\in\mathcal P(\F_s,q)$. By Lemma 4.6 in~\cite{Bor2}, the degree of every monomial occurring in $F$ belongs to $\mathcal D$. Thus we may write
$F(x)=\sum_{g\in\mathcal D}c_gx^g$, where $c_g\in\F_q$. Choose $\beta\in\F_q$ such that $\mathrm{Tr}_{n/e}(\beta)=1$.
Since $F(a)\in\F_s$ for every $a\in\F_q$, we have
$\mathrm{Tr}_{n/e}(\beta F(a))=F(a)$. Consequently,
$$
F(x)\equiv \mathrm{Tr}_{n/e}(\beta F(x))
=\sum_{g\in\mathcal D}\mathrm{Tr}_{n/e}(\beta c_gx^g)
\pmod{x^q-x}.
$$
Taking $a_g=\beta c_g$ gives the desired representation.
\end{proof}

The subfield case also determines the MVSPs whose value set is a vector subspace. More precisely, the following result
from~\cite{BR} expresses these polynomials as compositions of
a fixed additive polynomial with MVSPs whose value set is the
smallest subfield containing the prescribed subspace.

\begin{theorem}[{\cite[Theorem~2.4]{BR}}]\label{thm:subspace}
Let $q=p^n$ be a prime power, let $k|n$, and let $\mathcal U\subseteq \F_q$ be an $\F_{p^k}$-vector space with $\#\mathcal U> 2$ and $1\in \mathcal U$. Write $L_{\mathcal U}(x)=\prod_{u\in \mathcal U}(x-u)$. If $\F_{p^{km}}\subseteq \F_q$ is the smallest subfield of $\F_q$ containing $\mathcal U$, then there exists a unique $p^k$-additive polynomial $M_{\mathcal U}(x)$ such that $M_{\mathcal U}(L_{\mathcal U}(x))=x^{p^{km}}-x$. In this case, 
$$\mathcal P(\mathcal U, q)=\{M_{\mathcal U}(f(x)): f\in \mathcal P(\F_{p^{mk}}, q)\}.$$
\end{theorem}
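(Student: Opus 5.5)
The plan has three parts: build the additive polynomial $M_{\mathcal U}$, show that composing with it preserves the MVSP property in both directions, and then count degrees to match value sets.

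\emph{Existence and uniqueness of $M_{\mathcal U}$.} The polynomial $L_{\mathcal U}(x)=\prod_{u\in\mathcal U}(x-u)$ is a $p^k$-additive polynomial, since $\mathcal U$ is an $\F_{p^k}$-space. The field $\F_{p^{km}}$ is an $\F_{p^k}$-space containing $\mathcal U$, so $L_{\mathcal U}$ right-divides $x^{p^{km}}-x$ in the skew ring of $p^k$-additive polynomials (Ore's division algorithm). Write $x^{p^{km}}-x=M(L_{\mathcal U}(x))+R(x)$ with $\deg R<\deg L_{\mathcal U}$. Then $R$ is additive and vanishes on $\mathcal U$, since both $x^{p^{km}}-x$ and $L_{\mathcal U}$ do. A nonzero additive polynomial of degree less than $\#\mathcal U$ cannot vanish on all of $\mathcal U$ (its roots are simple when its linear coefficient is nonzero; in general one argues the same way in the skew ring), so $R=0$. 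Uniqueness follows by comparing degrees in $M_1(L_{\mathcal U})=M_2(L_{\mathcal U})$. One can also check that $M_{\mathcal U}$ has coefficients in $\F_q$.

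\emph{Inclusion $\supseteq$.} Let $f\in\mathcal P(\F_{p^{km}},q)$ and $F=M_{\mathcal U}(f)$. For $a\in\F_{p^{km}}$ we have $L_{\mathcal U}(M_{\mathcal U}(a))=M_{\mathcal U}(L_{\mathcal U}(a))$ up to the usual commutation, which needs a careful check. Cleaner: $M_{\mathcal U}(L_{\mathcal U}(y))=0$ for all $y\in\F_{p^{km}}$, and $L_{\mathcal U}$ maps $\F_{p^{km}}$ onto a space of size $p^{km}/\#\mathcal U$. Since $\deg M_{\mathcal U}=p^{km}/\#\mathcal U$, this image is exactly $\ker M_{\mathcal U}$. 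Hence $\dim\ker M_{\mathcal U}=km-\dim\mathcal U$, so $M_{\mathcal U}(\F_{p^{km}})$ has size $\#\mathcal U$. Moreover $M_{\mathcal U}(\F_{p^{km}})\supseteq \mathcal U$: for $u\in\mathcal U$, the relation $M_{\mathcal U}\circ L_{\mathcal U}=x^{p^{km}}-x$ implies $L_{\mathcal U}\circ M_{\mathcal U}=x^{p^{km}}-x$, because $x^{p^{km}}-x$ is central in the skew ring when $\F_{p^{km}}$ contains the coefficients. I need to verify that the coefficients lie in $\F_{p^{km}}$. Granting this, $M_{\mathcal U}(\F_{p^{km}})\subseteq\ker L_{\mathcal U}=\mathcal U$, and the sizes force equality. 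So $V_F=\mathcal U$. The degree satisfies $\deg F=\deg f\cdot p^{km}/\#\mathcal U$, so $\lceil q/\deg F\rceil=\lceil q\#\mathcal U/(p^{km}\deg f)\rceil\le \#\mathcal U$, using $\deg f\ge q/p^{km}$. Combined with \eqref{eq:range}, this shows $F$ is an MVSP.

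\emph{Inclusion $\subseteq$.} Let $F\in\mathcal P(\mathcal U,q)$. The goal is to write $F=M_{\mathcal U}(f)$ with $f$ taking values in $\F_{p^{km}}$. The idea is to use the differential equation $T(F)=\theta(x^q-x)F'$, where here $T=L_{\mathcal U}$. Because $L_{\mathcal U}$ is additive, $L_{\mathcal U}(F)$ has derivative $cF'$, with $c$ the linear coefficient of $L_{\mathcal U}$. I would first reduce modulo $x^q-x$: $L_{\mathcal U}(F(a))=0$ for all $a$, so $F$ maps into $\ker L_{\mathcal U}=M_{\mathcal U}(\F_{p^{km}})$. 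For each $a$, choose a preimage $g(a)\in\F_{p^{km}}$. The map $g$ is not canonical, and this is the main obstacle: one must choose $f$ of small degree with $M_{\mathcal U}(f)=F$ as polynomials, not merely as functions.

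For this, I would use the MVSP degree condition, $\deg F$ roughly $q/\#\mathcal U$, together with Mills' structure. The key step is to show that $F$, written in the $p$-adic monomial basis, has its monomials structured so that the $p^k$-adic "division" of $F$ by the leading part of $M_{\mathcal U}$ terminates. Concretely, I would attempt to solve $M_{\mathcal U}(f)=F$ recursively on coefficients, using that $M_{\mathcal U}$ has leading term $x^{p^{km}/\#\mathcal U}$. This works provided all exponents of $F$ appearing at the top are multiples of the right $p$-power, which should follow from the derivative equation by comparing the top-degree terms. Finally, $f$ takes values in $\F_{p^{km}}$ (after correcting by elements of $\ker M_{\mathcal U}$), and its degree is $\deg F\cdot\#\mathcal U/p^{km}$, so $f$ attains the minimum in \eqref{eq:range} and therefore lies in $\mathcal P(\F_{p^{km}},q)$.
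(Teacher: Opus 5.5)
\textbf{The gap is in the inclusion} $\mathcal P(\mathcal U,q)\subseteq\{M_{\mathcal U}(f)\}$, which is the real content of the theorem. The paper does not reprove it; it imports it from \cite[Theorem~2.4]{BR}. You correctly see that $F=M_{\mathcal U}(f)$ must hold as polynomials, but the proposed mechanism cannot deliver this.

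Write $s=p^k$ and $M_{\mathcal U}=\sum_{i=0}^{m'}b_ix^{s^i}$ with $s^{m'}=p^{km}/\#\mathcal U$. Since $M_{\mathcal U}$ is additive, $M_{\mathcal U}(\sum_j c_jx^j)=\sum_j\sum_i b_ic_j^{s^i}x^{js^i}$. So the equation $M_{\mathcal U}(f)=F$ splits along the exponent orbits $\{js^t:t\ge0\}$ with $s\nmid j$. On such an orbit, let $F_t,f_t$ be the coefficients of $x^{js^t}$; the equation reads $F_t=\sum_i b_if_{t-i}^{s^i}$.
\begin{itemize}
\item Solving from the bottom always works, since $b_0\neq0$ (because $a_0b_0=-1$). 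But the resulting $f_t$ must vanish for all large $t$.
\item Solving from the top, as in your recursion, needs every leading exponent met after the previous subtractions to be divisible by $s^{m'}$. Otherwise it leaves a remainder supported on $x^{j},\dots,x^{js^{m'-1}}$, and that remainder must be zero.
\end{itemize}
Either way, this is a condition tying together the low-order coefficients of $F$ on every orbit. Comparing top-degree terms of $L_{\mathcal U}(F)=\theta(x^q-x)F'$ essentially gives only $\#\mathcal U\cdot\deg F=q+\deg F'$, which says nothing of this kind.

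What is missing is an argument that uses the full polynomial identity to control every coefficient of $F$, and then turns that control into the vanishing of all these remainders. One possible start: differentiating gives $F''=0$ and $\theta=-a_0$; then apply a descent like that of Section~\ref{sec:mills-equation}, separating exponents modulo $s$ and extracting $s$-th roots repeatedly. As written, ``this should follow from the derivative equation'' amounts to assuming the theorem.

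\textbf{Two further steps need repair.}

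First, even granting $F=M_{\mathcal U}(f)$, the degree count does not make $f$ an MVSP. From $q/\deg F>\#\mathcal U-1$ you only get $q/\deg f>p^{km}(1-1/\#\mathcal U)$. This is weaker than the needed $q/\deg f>p^{km}-1$ unless $\mathcal U=\F_{p^{km}}$. Use the differential equation instead:
\begin{itemize}
\item $L_{\mathcal U}(F)=L_{\mathcal U}(M_{\mathcal U}(f))=f^{p^{km}}-f$ and $F'=b_0f'$, so $f^{p^{km}}-f=\theta b_0(x^q-x)f'$.
\item Theorem~\ref{MillsBor} then gives $f\in\mathcal P(\F_{p^{km}},q)$; it applies because $\#V_f\ge\#\mathcal U>2$.
\item The identity $L_{\mathcal U}\circ M_{\mathcal U}=x^{p^{km}}-x$ also shows that $M_{\mathcal U}(y)\in\mathcal U$ if and only if $y\in\F_{p^{km}}$. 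So no correction by $\ker M_{\mathcal U}$ is needed.
\end{itemize}

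Second, in the inclusion $\supseteq$, the inequality you prove, $\lceil q/\deg F\rceil\le\#\mathcal U=\#V_F$, is the half already given by \eqref{eq:range}. You need $q/\deg F>\#\mathcal U-1$. This follows from the other half of the MVSP condition on $f$:
\[q/\deg F=(\#\mathcal U/p^{km})(q/\deg f)>\#\mathcal U(1-p^{-km})\ge\#\mathcal U-1.\]

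\textbf{What is sound.} Existence and uniqueness of $M_{\mathcal U}$ are fine. The commutation also works:
\begin{itemize}
\item $L_{\mathcal U}\in\F_{p^{km}}[x]$ simply because $\mathcal U\subseteq\F_{p^{km}}$, so $M_{\mathcal U}$ has coefficients in $\F_{p^{km}}$.
\item There $c=x^{p^{km}}-x$ is central, and $(L_{\mathcal U}\circ M_{\mathcal U}-c)\circ L_{\mathcal U}=0$ with no zero divisors gives $L_{\mathcal U}\circ M_{\mathcal U}=c$. This is \cite[Lemma~3.3]{BR}.
\end{itemize}
Hence $V_F=M_{\mathcal U}(\F_{p^{km}})=\mathcal U$, as you claim.
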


\begin{remark}\label{rem:ad-div}
If $L_{\mathcal U}, M_{\mathcal U}$ are as in Theorem~\ref{thm:subspace}, Lemma 3.3 in~\cite{BR} implies that $M_{\mathcal U}(L_{\mathcal U}(x))=L_{\mathcal U}(M_{\mathcal U}(x))$. In particular, $M_{\mathcal U}(x)$ divides $x^{p^{km}}-x$. 
\end{remark}

We conclude this section by stating the conjecture proposed
in~\cite{BR} as a theorem, whose proof is given in Section~\ref{sec:proof}.

\begin{theorem}\label{thm:conjecture}
Let $q=p^{n}$, $k$ a divisor of $n$, and $v$ a divisor of $p^k-1$. Let $\mathcal U\subseteq \F_q$ be an $\F_{p^k}$-vector space of dimension $m$ such that $1\in\mathcal U$ and $\#\mathcal U^v=r+1>2$. Then the following hold:

\begin{enumerate}[\rm(i)]
    \item if $\mathcal U$ is not a field, then 
    $\mathcal P(\mathcal U^v, q)=\{f^v: f\in \mathcal P(\mathcal U, q)\}.$
    \item if $\mathcal U$ is a field, then 
    $$\mathcal P(\mathcal U^v, q)=\{f^{(p^e-1)/r}: f\in \mathcal P(\mathbb F_{p^e}, q)\},$$
    where $\F_{p^e}$ is the smallest subfield of $\F_q$ that contains $\mathcal U^v$.
\end{enumerate}
\end{theorem}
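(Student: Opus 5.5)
The plan is to prove the two inclusions separately. The inclusions $\supseteq$ should be routine; the real content is $\subseteq$, which I would get by solving Mills's differential equation $T(F)=\theta(x^q-x)F'$ and exploiting root multiplicities through Hasse derivatives.

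\textbf{The inclusions $\supseteq$.}
In case (i), take $f\in\mathcal P(\mathcal U,q)$. Then $V_{f^v}=\mathcal U^v$. Since $\#\mathcal U^v=1+(\#\mathcal U-1)/v$, and $v\mid p^k-1$ forces $\mathcal U\setminus\{0\}$ to be a union of $v$-element fibres of $u\mapsto u^v$, the equality $\#V_f=\lceil q/\deg f\rceil$ gives
\[
\#V_{f^v}=\lceil q/(v\deg f)\rceil .
\]
This step uses the bound $2v\deg f<q$ implicit in $r>1$. Case (ii) is identical with $\F_{p^e}$ and exponent $(p^e-1)/r$, after checking that $\mathcal U^v=\F_{p^e}^{(p^e-1)/r}$, i.e.\ that $\mathcal U^v$ is the set of $0$ and the $r$-th roots of unity.

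\textbf{Setting up $\subseteq$.}
Let $F\in\mathcal P(\mathcal U^v,q)$ and write
\[
T(x)=\prod_{a\in\mathcal U^v}(x-a)=x\,T_0(x),
\]
so that $T(x^v)=x\,L_{\mathcal U}(x)$ up to the obvious normalization. Mills's equation $T(F)=\theta(x^q-x)F'$ says every root of $F$ and of $T_0(F)$ is simple modulo the derivative structure. I would look at the distinguished factor $G=\gcd(F,\,x^q-x)$-part, i.e.\ the zeros of $F$, and compare multiplicities: at a root $\alpha\in\overline{\F}_q$ of $F$ of multiplicity $\mu$, the left side vanishes to order $\mu$ and the right side to order $\mu-1$ plus the order at $\alpha$ of $x^q-x$. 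From this one gets that roots of $F$ in $\F_q$ have multiplicity $\equiv$ constraint modulo $p$, and roots outside $\F_q$ do not occur in $F'$'s complement.

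\textbf{Case (i).}
I would apply Hasse derivatives $\HD{j}$ to the equation for $j<p^k$. Since $L_{\mathcal U}$ is $p^k$-additive, the values $T_0(F)$ behave like $v$-th powers of linearized expressions. The goal is to show that every multiplicity of $F$ is divisible by $v$, so $F=H^v$ up to a constant absorbed into the $v$-th power. Then $H$ takes values in $\mathcal U$ after adjusting by a $v$-th root of unity on each fibre, and one must argue that this choice can be made consistently, i.e.\ by a polynomial. This holds because $\mu(\F_q)$ acts and $H^v=F$ determines $H$ up to a global $v$-th root of unity, which can be multiplied into $\mathcal U$ since $\mathcal U$ is $\F_{p^k}$-linear. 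Degree counting then shows $H$ is an MVSP.

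\textbf{Case (ii).}
Here the cyclic structure replaces the additive one: $T=x(x^r-1)$, and one proves directly that multiplicities of $F$ are divisible by $t=(p^e-1)/r$, writing $F=f^t$ with $f^{p^e}=f$ on $\F_q$. The iterative factorization argument peels off factors of $v$ prime-by-prime, each time producing an MVSP of smaller degree with the same type of value set.

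\textbf{Main obstacle.}
I expect the hardest step to be showing that the multiplicity divisibility passes to the full factorization, i.e.\ that $F$ is a $v$-th power (respectively a $t$-th power) of a polynomial. The difficulty is that the Hasse-derivative constraints on $T_0(F)$ must be converted into constraints on $F$ itself. I would try first to do the induction on $\deg F$, peeling one prime factor $\ell$ of $v$ at a time.
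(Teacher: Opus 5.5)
\textbf{Verdict.} There is a genuine gap: the step that carries the whole theorem is stated as a goal but never proved.

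\textbf{What is fine.} Your inclusions $\supseteq$ are correct; this is Lemma~\ref{lem:br-v}. The final clean-up is also correct. Once $F=cH^v$, the constant $c$ is a $v$-th power because some nonzero value of $F$ lies in $\mathcal U^v$. Moreover $H$ takes values in $\mu_v\,\mathcal U=\mathcal U$, so a degree count makes $H$ an MVSP.

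\textbf{First missing idea: where the Hasse derivatives come from.} The statement that every root multiplicity of $F$ is divisible by $v$ (resp.\ $t$) is the entire content of the theorem, and you give no mechanism for it. Your ``main obstacle'', passing from divisible multiplicities to a $v$-th power, is trivial. Comparing orders at a root $\alpha$ of $F$ in $T(F)=\theta(x^q-x)F'$ only detects multiplicities modulo $p$; also, $F'$ does not have order $\mu-1$ at $\alpha$ when $p\mid\mu$. Applying $\HD{j}$ to that identity, or to $T_0(F)$, yields nothing until you know which rational function to differentiate.

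The paper first needs Mills's structure theorem: $F=L^{v_0}N^{p^{m_0k_0}}$ with $L=\gcd(F,x^q-x)=A^{p^{m_0k_0}}x+B^p$ squarefree. It can use this with $\gamma_0=0$ only after discarding $v=1$ and the case where $\mathcal U^v$ is a field (Remark~\ref{rem:field}, Lemma~\ref{lem:gamma}). In case (i) it also uses $v_0=v$ and reduces to $v=s-1$, $s=p^k$, via Lemma~\ref{lem:reduction}.

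Mills's equation then becomes a usable identity:
\begin{itemize}
\item in the field case, $(B_1^s+xA_1^s)N_1^r=A_1x^{s^{h-1}}+B_1$;
\item in the non-field case, after splitting $L$ by residues modulo $Q=s^m$, $Ax^{s^\ell}+C+\sum_i\Delta_iN^{s_i}=0$ with $\Delta_i\in\mathcal F^{(i)}$.
\end{itemize}
Only now do Hasse derivatives give information. Divide by $N^r$ (resp.\ $N$). For $2\le k<s$, $\HD{k}$ annihilates the parts of the form $Y^s+xZ^s$. A degree-separation argument then forces $\HD{k}(P/N^r)=0$. Since $\gcd(P,N)=1$, which uses that $L$ is squarefree, Lucas's theorem at a pole shows that roots of $N$ have multiplicity congruent to $0$ or $v$ modulo $s$.

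\textbf{Second missing idea: the descent.} Even this only gives $N=S^vN_1^s$ with $S$ squarefree. That is a congruence modulo $s$, not divisibility by $v$: a root of multiplicity $s$ passes the test, yet $v\nmid s$ for $v>1$.

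The paper supplies a descent that reproduces the structure. Multiply by $S$ and use $vr+1=s$; every coefficient then splits as $SW=W_0^s+xW_1^s$. Taking $s$-th roots turns one equation into a \emph{family} of equations of the same shape for $N_1$, with $x^{s^{\ell-1}}$ in place of $x^{s^\ell}$. The coprimality and degree conditions are inherited; this is the content of covering systems, Proposition~\ref{prop:recursive} and Theorem~\ref{thm:covering*}. Induction on this depth gives $N=cH^v$.

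\textbf{Why your proposed induction does not replace this.} Inducting on $\deg F$ by ``peeling one prime factor of $v$ at a time'' fails for three reasons:
\begin{itemize}
\item Proving that $F$ is an $\ell$-th power for a prime $\ell\mid v$ is no easier than the original problem.
\item The Hasse-derivative congruences are intrinsically modulo $s=p^k$, tied to $s-1$ rather than to primes dividing $v$.
\item The paper actually moves in the opposite direction, reducing case (i) to the largest exponent $v=s-1$.
\end{itemize}
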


\section{Preliminaries}
In this section we provide some preliminary results regarding MVSPs, Hasse derivatives and binomial coefficients modulo a prime.

\subsection{On MVSPs}
We recall some structural results on MVSPs that will be used
throughout the paper. We begin with Mills's theorem, which provides a special factorization for MVSPs. 

\begin{theorem}[Mills]\label{thm:mills}
Let $F\in \F_q[x]$ be an MVSP with value set $V_{F}=\{\gamma_0, \ldots, \gamma_r\}$, where $\# V_F=r+1>2$ and $\# (F^{-1}(\gamma_0)\cap \F_q)\le \# (F^{-1}(\gamma_i)\cap \F_q)$ for  $i=1,\ldots, r$. Set $L=\gcd(F-\gamma_0,x^q-x)$. Then there are positive integers $\mu, \kappa, \nu$  and polynomials $A, B, N\in \F_q[x]$, with $L\nmid N$, such that the following hold.
\begin{enumerate}[\rm(i)]
\item $\F_{p^{\kappa}} \subseteq \F_q$, $\nu$ divides $p^{\kappa}-1$, and $\nu r+1=p^{\mu\kappa}$.
\item $F=L^{\nu}N^{p^{\mu\kappa}}+\gamma_0$.
\item $L=A^{p^{\mu\kappa}}x+B^p$.
\end{enumerate}
\end{theorem}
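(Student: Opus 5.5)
The plan is to reprove Mills's factorization directly from the differential equation $T(F)=\theta(x^q-x)F'$ mentioned in the introduction. Here $T(x)=\prod_{i=0}^r(x-\gamma_i)$. The argument then reads off the structure of $F-\gamma_0$ from a comparison of root multiplicities, with the exponent $p^{\mu\kappa}$ coming from a $p$-adic analysis of $F'$.

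\emph{Step 1 (the equation).} Put $d=\deg F$. Minimality gives $rd<q\le (r+1)d$. Since $F(a)\in V_F$ for every $a\in\F_q$, we have $x^q-x\mid T(F)$, so $T(F)=(x^q-x)H$ with $\deg H=(r+1)d-q<d$. Differentiating gives $T'(F)F'=(x^q-x)H'-H$. Reducing modulo $x^q-x$ shows that $H\equiv -T'(F)F'$. Both sides have degree smaller than $q$ after the usual reduction, using $rd<q$ to bound $\deg T'(F)F'\le rd-1$. From this I expect to extract $H=\theta F'$ with $\theta\in\F_q^\times$, comparing degrees and leading terms and using that each $\gamma_i$ is actually attained. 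This is the routine part.

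\emph{Step 2 (multiplicities).} Write $F-\gamma_i=L_iR_i$, where $L_i=\gcd(F-\gamma_i,x^q-x)$, so that $L_i$ is squarefree and $R_i$ is coprime to $x^q-x$ only after removing repeated rational roots. The right-hand side $\theta(x^q-x)F'$ has every rational root with multiplicity exactly one more than in $F'$. Every non-rational root of $F'$ must be a root of some $F-\gamma_i$ of multiplicity one higher than in $F'$. A standard multiplicity count then shows the following. Write $m$ for the multiplicity of a root of $F-\gamma_i$. If $p\nmid m$, the root has multiplicity $m-1$ in $F'$. If $p\mid m$, its multiplicity in $F'$ is at least $m$. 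Comparing both sides of the equation forces the following at each root of $F-\gamma_i$: either the root is rational with $p\nmid m$, or $p\mid m$ and the excess is absorbed. Combining this with the total count $\sum_i \#(F^{-1}(\gamma_i)\cap\F_q)=q$ and $rd<q$, the fiber over $\gamma_0$ is the only one that can carry rational roots of multiplicity prime to $p$ that differ across roots. I would then show that all rational roots of $F-\gamma_0$ share a common multiplicity $\nu$, and that all other roots have multiplicity divisible by a common power $p^{\mu\kappa}$. This gives (ii): $F=L^\nu N^{p^{\mu\kappa}}+\gamma_0$ with $L\nmid N$.

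\emph{Step 3 (arithmetic of the exponents and the shape of $L$).} I would substitute (ii) into the equation and compute $F'=\nu L^{\nu-1}L'N^{p^{\mu\kappa}}$. Comparing with $T(F)/(x^q-x)$, whose factor at $\gamma_0$ is $L^{\nu}N^{p^{\mu\kappa}}/L$, forces $L'$ times the remaining factor $\prod_{i\ge1}(F-\gamma_i)/(\text{rational part})$ to be a $p^{\mu\kappa}$-th power. From this I would extract $L'=A^{p^{\mu\kappa}}$, and integration then gives $L=A^{p^{\mu\kappa}}x+B^p$, which is (iii). Degree counting, $\deg F=\nu\deg L+p^{\mu\kappa}\deg N$ together with $r\deg F<q$ and the fiber sizes, should then yield $\nu r+1=p^{\mu\kappa}$. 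Finally, $\nu\mid p^\kappa-1$ and $\F_{p^\kappa}\subseteq\F_q$ should follow from the fact that the $\gamma_i-\gamma_0$ are, up to scaling, the $\nu$-th roots structure of a field of size $p^{\kappa}$, read off from $T(x+\gamma_0)$ being essentially $x^{p^{\mu\kappa}}-cx$ composed with $x^\nu$.

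\emph{Main obstacle.} I expect Step 2 together with the extraction of a single exponent $p^{\mu\kappa}$ in Step 3 to be the hardest part. The difficulty is showing that the $p$-divisibility of the multiplicities is uniform, and that it is a power of $p^\kappa$ rather than merely of $p$. Here I would first try Hasse derivatives $\HD{j}$ of both sides at roots of $F-\gamma_i$, to pin down the exact $p$-adic valuations. After that I would use the comparison $T(x+\gamma_0)=x\prod_{i\ge1}(x-(\gamma_i-\gamma_0))$ with the identity at the level of $F-\gamma_0$ to force the additive/multiplicative form $x^{p^{\mu\kappa}}$-type of $T$.
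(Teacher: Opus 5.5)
\paragraph{Assessment.} There is a genuine gap. The paper does not prove this statement; it quotes it from Mills. The equation $T(F)=\theta(x^q-x)F'$ you start from appears in the paper only as the cited Theorem~\ref{MillsBor}, whose normalization of $\theta$ refers to the ordering of Theorem~\ref{thm:mills}. So you must derive that equation from scratch, and this is where your plan first breaks.

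\paragraph{Step 1.} Your bound $\deg T'(F)F'\le rd-1$ is false. The natural bound is $rd+d-1\ge q-1$, and it is reached in simple cases: for $q=7$, $F=x^2$, $T=x^4-x$ one gets $T'(F)F'=x^7-2x$. What differentiation really gives is $H\equiv -T'(F)F'\pmod{x^q-x}$. Since $\deg H,\deg F'<q$, the identity $H=\theta F'$ is \emph{equivalent} to
\[
T'(F(a))=-\theta\qquad\text{for every }a\in\F_q\text{ with }F'(a)\ne0,
\]
that is, $T'$ must take a single value on all values attained at simple rational points. No comparison of degrees and leading coefficients yields this; it is genuine structural information. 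For $V_F=\mathcal C_r\cup\{0\}$ one has $T'=r$ on $\mathcal C_r$ and $T'(0)=-1$. So unless $r\equiv-1\pmod p$, the claim says that simple rational points cannot occur both over $0$ and over $\mathcal C_r$. That is already part of the multiplicity information in (ii).

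\paragraph{Steps 2 and 3.} These assert the conclusions rather than derive them. Grant the equation. Comparing lowest-order terms at a rational root of $F-\gamma_i$ of multiplicity $m$ gives only $p\nmid m$ and $T'(\gamma_i)=-\theta m$, i.e.\ it determines $m$ modulo $p$. This does not make the rational roots over $\gamma_i$, $i\ge1$, simple, and it does not give one common multiplicity $\nu$ over $\gamma_0$. Nothing in your sketch explains why the value with the \emph{smallest} rational fibre is the distinguished one.

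There is also an error in Step 2. At a non-rational root, the equation forces the multiplicity in $F'$ to \emph{equal} the multiplicity in $F-\gamma_i$, not to be one less; this is what gives $p\mid m$.

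A uniform exponent $p^{\mu\kappa}$, the relation $\nu r+1=p^{\mu\kappa}$, and $L'=A^{p^{\mu\kappa}}$ all require a real higher-order argument. An example would be showing that $\HD{k}$ of a suitable function vanishes for $2\le k<p^{\mu\kappa}$, in the spirit of Lemma~\ref{lem:0-1}. You only name such an argument.

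\paragraph{Your route to (i).} This is wrong. For $V_F=\mathcal U^\nu$ with $\gamma_0=0$ one has $T(x^\nu)=x^{\nu-1}L_{\mathcal U}(x)$. Here $\mathcal U$ is a vector space over a subfield $\F_{p^k}$ and need not be a field. Hence $T(x^\nu)/x^{\nu-1}$ is a general $p^k$-linearized polynomial, not a binomial $x^{p^{\mu\kappa}}-cx$. Moreover, $\F_{p^{\mu\kappa}}$ need not lie in $\F_q$: with $q=p^6$, a $2$-dimensional $\F_{p^2}$-subspace $\mathcal U\ni1$ of $\F_{p^6}$ gives $p^{\mu\kappa}=p^4$. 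Nor can you import this shape of $V_F$ from Lemma~\ref{lem:br-1}, which in the literature is derived using Mills's theorem. Doing so would be circular.
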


The following result  is proved in  \cite[Theorem 3.1]{Bor2} and  \cite[Lemma 2.4]{Bor1}.

\begin{theorem}\label{MillsBor}
Let $F \in \F_q[x]$ be a polynomial with value set $V_{F}=\{\gamma_0, \ldots, \gamma_r\}$, where $r>1$.
Then $F$ is an MVSP if and only if there is a monic polynomial $T \in \F_q [x]$ and $\theta \in \F_q^\times$ such that
\begin{equation}\label{MVSP}
T(F)=\theta(x^q-x)F^{\prime}.
\end{equation}
Furthermore, the polynomial $T$ is uniquely determined as $T=\prod\limits_{i=0}^{r}(x-\gamma_i)$, and if $\gamma_0, \ldots, \gamma_r$ are ordered as in Theorem
\ref{thm:mills}, then $\theta=-T^{\prime}\left(\gamma_{i}\right)$ for all $\gamma_{i} \in V_{F}\backslash \{\gamma_0\}$.
\end{theorem}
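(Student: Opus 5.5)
We need to prove Theorem MillsBor: $F$ is an MVSP iff $T(F)=\theta(x^q-x)F'$ for some monic $T$ and $\theta\neq 0$. We also need uniqueness of $T$ and the formula for $\theta$.

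We treat the two directions separately. Throughout, $d=\deg F<q$.

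\emph{Sufficiency.} Suppose $T(F)=\theta(x^q-x)F'$ with $T$ monic and $\theta\ne0$.
\begin{enumerate}[1.]
\item Evaluating at $a\in\F_q$ gives $T(F(a))=0$, so $V_F$ is contained in the root set of $T$.
\item The right side is nonzero, so $F'\ne0$. Comparing degrees gives $\deg T\cdot d=q+\deg F'\le q+d-1$, hence $\deg T\le (q-1)/d+1$.
\item We must bound $\#V_F$ by $\lceil q/d\rceil$. For this, write $T=\prod_{\gamma}(x-\gamma)^{e_\gamma}$. Then $T(F)=\prod_\gamma (F-\gamma)^{e_\gamma}$, and this product is divisible by $x^q-x$.
\item Each root $a\in\F_q$ of $F-\gamma$ is a root of $x^q-x$ of multiplicity one on the right. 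It is also a root of $F'$ of multiplicity $m_a-1$, where $m_a$ is its multiplicity in $F-\gamma$. On the left its multiplicity is $e_\gamma m_a$. Hence $e_\gamma m_a=1+(m_a-1)$ when $p\nmid m_a$; in general the left multiplicity is at most $1+\operatorname{mult}_a F'$.
\item Counting degrees gives $\sum_{\gamma\in V_F}\deg(F-\gamma)\ge\cdots$. This should force each $e_\gamma=1$ for $\gamma\in V_F$, and force $T$ to have no roots outside $V_F$: a root $\gamma\notin V_F$ contributes a factor $(F-\gamma)^{e}$ coprime to $x^q-x$, which must divide $F'$, and this is impossible on degree grounds.
\item So $T=\prod_{\gamma\in V_F}(x-\gamma)$, and $\#V_F=\deg T\le (q+d-1)/d$. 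This gives $\#V_F\le\lceil q/d\rceil$, and combined with the lower bound in \eqref{eq:range} we get equality.
\end{enumerate}

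\emph{Necessity.} Let $F$ be an MVSP, $r>1$, ordered as in Theorem~\ref{thm:mills}, and set $T=\prod_i(x-\gamma_i)$.
\begin{enumerate}[1.]
\item Every $a\in\F_q$ is a root of $T(F)$, so $x^q-x$ divides $T(F)$.
\item Write $T(F)=(x^q-x)G$. Differentiating gives $T'(F)F'=-G+(x^q-x)G'$, so $G\equiv -T'(F)F'\pmod{x^q-x}$.
\item The goal is to show $G=\theta F'$, which has degree $\le d-1$. First compute the degree: $\deg G=(r+1)d-q$. The MVSP condition $r+1=\lceil q/d\rceil$ gives $(r+1)d-q<d$, so $\deg G<d$.
\item Mills' factorization (Theorem~\ref{thm:mills}) shows $F-\gamma_0=L^\nu N^{p^{\mu\kappa}}$. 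Hence $F'=\nu L^{\nu-1}L'N^{p^{\mu\kappa}}$.
\item Since $F'$ has degree $<d$ as well, we want to compare $G$ with $-T'(F)F'$ at enough points.
\begin{itemize}
\item For $a$ with $F(a)=\gamma_i$, we have $T'(F(a))=T'(\gamma_i)$. This gives $G(a)=-T'(\gamma_i)F'(a)$ on $\F_q$.
\item The key claim is that $T'(\gamma_i)$ takes a common value $-\theta$ for all $i\ne0$. Then $G-\theta F'$ vanishes on $\F_q\setminus F^{-1}(\gamma_0)$.
\item On $F^{-1}(\gamma_0)$, the vanishing should follow from $L\mid F'$ via Mills (when $\nu>1$), together with a corresponding divisibility for $G$ from the factor $(F-\gamma_0)$.
\end{itemize}
\item Then $G-\theta F'$ has degree $<d$ but vanishes at at least $q-\#F^{-1}(\gamma_0)\ge q-q/(r+1)\ge d$ points (counted with multiplicity). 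Hence $G=\theta F'$.
\end{enumerate}

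\emph{Main obstacle.} The hard step is proving that $T'(\gamma_i)$ is constant on $V_F\setminus\{\gamma_0\}$. Here we use Theorem~\ref{thm:mills}(i),(ii): the values satisfy $\gamma_i-\gamma_0=\lambda\cdot(\text{an element } \zeta_i)$, where the $\zeta_i$ range over the values of $L^\nu N^{p^{\mu\kappa}}$. Mills' structure shows these form the set $\{0\}\cup c\cdot\mu_\nu'$-type configuration, so that $T(x+\gamma_0)=x\,(x^{r\nu'}\ldots)$ behaves like $x^{p^{\mu\kappa}}-cx$ composed with a power map. Using $\nu r+1=p^{\mu\kappa}$ we would write $T(x+\gamma_0)^\nu$-related identity as $x\,R(x)$ with $R(x)=\prod_{i\ne0}(x-\delta_i)$ and $\delta_i^\nu$ satisfying an additive equation. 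Then we would compute $T'(\gamma_i)=\delta_i R'(\delta_i)$ directly and check that it is independent of $i$.

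I expect this step, extracting the precise shape of $V_F$ from Mills' theorem, to require the most care. If it proves too delicate, the fallback is to cite \cite[Theorem 3.1]{Bor2}, where it is established.

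Uniqueness of $T$ follows from the sufficiency argument: any such $T$ must equal $\prod_{\gamma\in V_F}(x-\gamma)$. The formula $\theta=-T'(\gamma_i)$ then follows from the evaluation in the fifth step of the necessity direction, at any $a$ with $F'(a)\ne0$ and $F(a)\ne\gamma_0$.
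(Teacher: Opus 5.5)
The paper does not prove this statement; it quotes it from \cite[Theorem~3.1]{Bor2} and \cite[Lemma~2.4]{Bor1}. Read as a self-contained proof, your proposal gets the easy parts right but does not prove the core of the necessity direction.

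\textbf{What works.} The sound parts can be shortened. Sufficiency and uniqueness of $T$ follow at once from your step~2:
\[
\#V_F\le\#\{\text{roots of }T\}\le\deg T\le\lfloor (q+d-1)/d\rfloor=\lceil q/d\rceil\le\#V_F .
\]
So $F$ is an MVSP, and $T$ is squarefree with root set exactly $V_F$. The unfinished multiplicity count in steps~4--5 is therefore unnecessary. Your reduction of necessity is also correct. You have $G\equiv -T'(F)F'\pmod{x^q-x}$ with $\deg G<d$. If $T'(\gamma_i)=-\theta$ for all $i\neq 0$, then $G-\theta F'$ vanishes on $\F_q\setminus F^{-1}(\gamma_0)$. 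That set has at least $qr/(r+1)\ge q/r>d$ points, since $r\ge 2$. So the bullet about $F^{-1}(\gamma_0)$ can be dropped; it would fail for $\nu=1$ anyway. Also, $\theta\neq 0$ is automatic because $T$ is squarefree.

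\textbf{The gap.} The missing piece is the ``key claim'' that $T'$ is constant on $V_F\setminus\{\gamma_0\}$, which is the entire content of necessity.
\begin{itemize}
\item Theorem~\ref{thm:mills} as quoted describes only the single polynomial $F-\gamma_0=L^\nu N^{p^{\mu\kappa}}$ and the relation $\nu r+1=p^{\mu\kappa}$. It says nothing about how $\gamma_1,\dots,\gamma_r$ are arranged.
\item That the $\gamma_i-\gamma_0$ are values of $L^\nu N^{p^{\mu\kappa}}$ is a tautology. The configuration you then describe (a coset of roots of unity, with $T(x+\gamma_0)$ tied to $x^{p^{\mu\kappa}}-cx$) is asserted, not derived. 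It is also false in general: value sets $a\cdot\mathcal U^v+b$ with $\mathcal U$ not a field are not of that form.
\item What would give constancy is the shape in Lemma~\ref{lem:br-1}. For $V=\mathcal U^v$ one has $T(x^v)=x^{v-1}L_{\mathcal U}(x)$, and differentiating at $u\in\mathcal U\setminus\{0\}$ gives $T'(u^v)=L_{\mathcal U}'(0)/v$. But that lemma is a much deeper result imported from \cite{BR}, where it is obtained with the help of \eqref{MVSP} itself. Using it here is circular.
\item Your fallback, citing \cite[Theorem~3.1]{Bor2}, cites the statement being proved.
\end{itemize}

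\textbf{What actually remains.} Compare the partial fractions of $F'/T(F)$ and $1/(\theta(x^q-x))$. This shows you must prove exactly two things. First, every root of $T(F)$ outside $\F_q$ has multiplicity divisible by $p$. Second, every $a\in\F_q\setminus F^{-1}(\gamma_0)$ is a root of $F-F(a)$ of multiplicity $\equiv 1\pmod p$. Theorem~\ref{thm:mills} controls multiplicities only on the roots of $F-\gamma_0$, so an additional idea is needed for the other fibers.
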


We next recall which subsets of $\F_q$ with more than two
elements occur as value sets of MVSPs. This characterization
explains the role of powers of vector subspaces in
Theorem~\ref{thm:conjecture}.

\begin{lemma}[{\cite[Theorem~2.2]{BR}}]\label{lem:br-1}
For each $S\subseteq \F_q$ with $\# S>2$, we have $\mathcal P(S, q)\ne \emptyset$ if and only if 
$S=a\cdot \mathcal U^{v}+b$ for some $a\in\F_q^\times$, $b\in\F_q$, an $\F_{p^k}$-vector subspace $\mathcal U\subseteq\F_q$ containing $1$, and a positive divisor $v$ of $p^k-1$, where $\F_{p^k}\subseteq\F_q$. 
\end{lemma}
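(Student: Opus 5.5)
The statement is Lemma~\ref{lem:br-1}, quoted from \cite[Theorem~2.2]{BR}. My plan treats the two directions separately: sufficiency by an explicit construction, and necessity by combining Mills's factorization (Theorem~\ref{thm:mills}) with the differential equation of Theorem~\ref{MillsBor}.

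\emph{Sufficiency.} Assume $S=a\cdot\mathcal U^v+b$. Since an MVSP stays an MVSP under an affine change of values, I may take $a=1$ and $b=0$. Let $\F_{p^{km}}$ be the smallest subfield containing $\mathcal U$ and choose $f\in\mathcal P(\F_{p^{km}},q)$; such $f$ exist by Theorem~\ref{thm:b-c}, for instance a trace map. Then $G=M_{\mathcal U}(f)$ lies in $\mathcal P(\mathcal U,q)$ by Theorem~\ref{thm:subspace}, and I claim $G^v$ is an MVSP with value set $\mathcal U^v$. The count goes as follows.
\begin{itemize}
\item Since $v\mid p^k-1$ and $\mathcal U$ is $\F_{p^k}$-stable, multiplication by the $v$-th roots of unity preserves $\mathcal U$. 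Hence $\#\mathcal U^v=1+(\#\mathcal U-1)/v$.
\item From $\deg G=\lceil q/\#\mathcal U\rceil$ and $\deg G^v=v\deg G$, check that $\lceil q/(v\deg G)\rceil=1+(\#\mathcal U-1)/v$.
\end{itemize}
The second check uses that $\#\mathcal U$ divides $q$ and that $\deg G$ is essentially $q/\#\mathcal U$ up to the slack permitted by Theorem~\ref{thm:b-c}. This gives an explicit construction, but I expect the bookkeeping with ceilings to need some care.

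\emph{Necessity.} Let $F$ be an MVSP with $\#V_F=r+1>2$. By Theorem~\ref{thm:mills}, after translating so that $\gamma_0=0$, we have $F=L^\nu N^{p^{\mu\kappa}}$ with $\nu r+1=p^{\mu\kappa}$. The plan has three steps.
\begin{enumerate}
\item Show that the nonzero values $\gamma_1,\dots,\gamma_r$, after scaling by some $a^{-1}$, become $\nu$-th powers of the nonzero elements of a set $\mathcal U$ with $\#\mathcal U=p^{\mu\kappa}$.
\item To do this, compare multiplicities in $T(F)=\theta(x^q-x)F'$. Each nonzero $\gamma_i$ has preimage count governed by $\theta=-T'(\gamma_i)$. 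Take $H$ to be a suitable $\nu$-th root of $F$ built from $L$ and $N$, defined on the set where $F\neq 0$, so that $H$ takes exactly $p^{\mu\kappa}$ values including $0$.
\item Prove that the value set of $H$ is additively closed over $\F_{p^k}$, by showing that the polynomial $\prod_{u}(x-u)$ vanishing on it is $p^k$-linearized. The route is to substitute $T(x)=x\,\tilde T(x^\nu)$ into the equation, which yields a Mills-type equation for $H$ whose vanishing polynomial is forced to have only $p$-power-degree terms.
\end{enumerate}

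The main obstacle is step~3, establishing additivity of the value set of $H$. Degree considerations force the vanishing polynomial $T_H$ to satisfy $T_H(H)=c(x^q-x)H'$. My first attempt would be to differentiate this identity and use $H'$ together with the relation $x^q-x=\prod(x-a)$ to show that $T_H'$ is constant. A polynomial of degree $p^{\mu\kappa}$ with constant derivative whose roots are simple and include $0$ need not be linearized in general, though. If that fails, I would instead apply Hasse derivatives $\HD{p^j}$ to the identity and argue that all non-$p$-power coefficients of $T_H$ vanish.
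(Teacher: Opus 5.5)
This lemma is not proved in the paper. It is imported from \cite[Theorem~2.2]{BR}, so there is no in-paper argument to compare with, and I judge your proposal on its own.

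\textbf{Sufficiency.} This direction is correct. It is the same ceiling computation the paper uses in Lemma~\ref{lem:reduction} and in the first half of the proof of Theorem~\ref{thm:explicit} (equivalently, Lemma~\ref{lem:br-v}). Only the formulation needs fixing. What you know is not $\deg G=\lceil q/\#\mathcal U\rceil$ but $Q-1<q/\deg G\le Q$, with $Q=\#\mathcal U$. This gives $h<q/(v\deg G)\le h+1/v$ for $h=(Q-1)/v$, hence $\lceil q/\deg G^v\rceil=h+1=\#\mathcal U^v$. Also note that $p^{km}\ge\#\mathcal U\ge\#\mathcal U^v>2$, which is what Theorems~\ref{thm:b-c} and~\ref{thm:subspace} require.

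\textbf{Necessity: the $\nu$-th root and Step~1.} This direction has a genuine gap. It lists goals but supplies no mechanism, and its central device is not available.
\begin{enumerate}[(a)]
\item A polynomial $H$ with $H^\nu=F$ exists only if $N$ is a constant times a $\nu$-th power. That is precisely Theorem~\ref{thm:conjecture}, the main result of this paper. Its proof starts from $V_F=\mathcal U^v$ and uses this throughout: Lemma~\ref{lem:gamma}, the identity $T(x^{s-1})/x^{s-2}=L_{\mathcal U}(x)$, and the equalities $m_0k_0=mk$, $v_0=v$. Invoking it here is therefore circular.
\item If instead $H$ is only a function on $\{a\in\F_q:F(a)\ne0\}$, it exists only after Step~1. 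Its image depends on the choice of roots; the correct object is the full preimage $\{u\in\F_q:au^\nu\in V_F\}$. It also has no derivative, so neither a Mills-type equation for $H$ nor Hasse derivatives make sense.
\item Step~1 itself is never argued. The relation $\theta=-T'(\gamma_i)$ only says that $T'$ takes the same value at every $\gamma_i\ne\gamma_0$. You do not explain how this would place all $\gamma_i-\gamma_0$ in a single coset of $(\F_q^\times)^\nu$.
\end{enumerate}

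\textbf{Necessity: Step~3.} This step carries the whole content of the lemma, and it is left open. Its starting point needs no $H$. With $\gamma_0=0$, write $T(y)=yT_1(y)$ and $\Phi(z)=zT_1(z^\nu)$.
\begin{itemize}
\item $\Phi$ has degree $\nu r+1=p^{\mu\kappa}$. Its roots are $0$ and the $\nu r$ distinct $\nu$-th roots in $\overline{\F}_q$ of $\gamma_1,\dots,\gamma_r$.
\item At each nonzero root $u$, $\Phi'(u)=\nu T'(u^\nu)=-\nu\theta$.
\item Since $\deg\Phi'\le p^{\mu\kappa}-2$, $\Phi'$ is constant.
\end{itemize}
As you observe yourself, this only yields $\Phi(z)=-\nu\theta z+R(z^p)$. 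What must be proved is that the root set of $\Phi$ is an additive group, i.e.\ that $\Phi$ is $p$-linearized. Step~1 would then follow: the Frobenius $u\mapsto u^q$ acts on that root set as $u\mapsto\zeta_uu$ with $\zeta_u$ in the group of $\nu$-th roots of unity, and additivity forces $\zeta_u$ to be constant. Your fallback of applying $\HD{p^j}$ names neither the identity to be differentiated nor why the resulting terms would separate. As it stands, the ``only if'' direction is unproved.
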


For value sets of the form $\mathcal U^v$, the following
result identifies the distinguished value $\gamma_0$
in Mills's theorem under suitable assumptions.

\begin{lemma}[{\cite[Lemma~6.1]{BR}}]\label{lem:gamma}
Let $\mathcal U, v$, and $k$ be as in Theorem~\ref{thm:conjecture}. Suppose that $F\in \mathcal P(\mathcal U^v, q)$, and let $\gamma_0 \in V_F=\mathcal U^v$ be as in Theorem~\ref{thm:mills}. If $v>1$ and $\mathcal U^v$ is not a subfield of $\F_q$, then $\gamma_0=0$.
\end{lemma}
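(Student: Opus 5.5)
The plan is to argue by contradiction. Assume $\gamma_0\neq 0$ and derive two symmetries of the value set $S:=V_F=\mathcal U^v$. Together they would force $S$ to be a subfield of $\F_q$, contradicting the hypothesis. The first symmetry is free: since $v\mid p^k-1$ and $\mathcal U$ is an $\F_{p^k}$-space, $S$ is stable under multiplication by the group $\mu_v\subseteq\F_{p^k}^\times$ of $v$-th roots of unity. That is, $S$ is invariant under the "rotations" $y\mapsto \xi y$ about $0$, for $\xi\in\mu_v$, and $v>1$ makes this group nontrivial.

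The second symmetry should be a rotational or additive structure of $S$ centred at $\gamma_0$, extracted from Theorem~\ref{thm:mills} and Theorem~\ref{MillsBor}. Put $Q=p^{\mu\kappa}$ and write $F-\gamma_0=L^{\nu}N^{Q}$ with $L=A^{Q}x+B^{p}$. Then $F'=\nu L^{\nu-1}A^{Q}N^{Q}$. Substitute this into $T(F)=\theta(x^q-x)F'$ and set $\widetilde T(y):=T(y+\gamma_0)$, a polynomial in $y$. Using $\nu r+1=Q$ and comparing both sides as polynomials in $L^{\nu}$ and $N^{Q}$, I aim to show that $\widetilde T(y)=y\,R(y^{\nu})$. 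Here $R(z)$ should be, up to scaling, of the form $\ell(z)/z$ for a $p^{\kappa}$-additive-type polynomial $\ell$, the analogue of Mills' description of the value set. Concretely, the target is
\[
S-\gamma_0=\{w^{\nu}c:\ w\in W\}
\]
for some $\F_{p^{\kappa}}$-subspace $W$ and some $c\in\F_q^\times$. In particular $S-\gamma_0$ is stable under multiplication by $\mu_\nu\subseteq\F_{p^\kappa}^\times$.

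This extraction is the step I expect to be the main obstacle. The comparison of the two sides of the differential equation must be carried out carefully enough to recover the full "power of a subspace" shape about $\gamma_0$, not merely a divisibility. My first attempt would be to reduce both sides modulo $L$ and modulo $N$ separately. If that fails, I would instead invoke Lemma~\ref{lem:br-1} applied to the translated polynomial $F-\gamma_0$, which is again an MVSP. Together with the fact that $0=\gamma_0-\gamma_0$ is the value of smallest fibre, this should give the same conclusion.

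With both symmetries available, I split into two cases. If $\nu>1$, compose the rotation $y\mapsto\zeta(y-\gamma_0)+\gamma_0$ (with $\zeta\in\mu_\nu$) with a rotation about $0$ by $\xi\in\mu_v$ chosen so that the product $\xi\zeta$ is $1$, or else iterate the two rotations. Since the centres differ ($\gamma_0\neq0$), the resulting affine map is a nontrivial translation $y\mapsto y+t$ with $t\neq0$ that preserves $S$. Hence $S$ is a union of cosets of the additive group generated by such $t$. If $\nu=1$, then $S-\gamma_0$ is itself an additive subgroup, so again $S$ is a coset of a subgroup containing $-\gamma_0\neq0$. In either case $S$ is closed under a nontrivial additive group $G$ and $0\in S=\mathcal U^v$. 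Combining this with closure under $\mu_v$ and the counting identity $\#S=1+(\#\mathcal U-1)/v$, I would show that $S$ is closed under addition and under multiplication, hence a subfield. The key tool is that $S\setminus\{0\}$ is a union of $\mu_v$-orbits, while its size is a power of $p$ by the translation invariance. This contradicts the assumption that $\mathcal U^v$ is not a subfield, so $\gamma_0=0$.
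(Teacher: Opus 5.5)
\textbf{Verdict: your proposal has a genuine gap at its first step.} The paper imports this lemma from \cite{BR} without proof, so I am judging your argument on its own.

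\textbf{The rotation symmetries you rely on need not exist.} $\mathcal U^v$ is stable under multiplication by $c^v$ for $c\in\F_{p^k}^\times$. That is the subgroup of order $(p^k-1)/v$, not $\mu_v$; stability under $\mu_v$ would need $v^2\mid p^k-1$. This subgroup is trivial when $v=p^k-1$, which is exactly the case the paper reduces to. Likewise, about $\gamma_0$ you could at best hope for $(\F_{p^\kappa}^\times)^\nu$, not $\mu_\nu$.

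Here is a concrete instance. Take $q=27$, $p^k=3$, $v=2$, and $\mathcal U=\F_3+\F_3\omega$ with $\omega\notin\F_3$.
\begin{enumerate}[(a)]
\item The set $S=\mathcal U^2$ has $5$ elements, all squares, and $M_{\mathcal U}(x)^2\in\mathcal P(S,27)$ by Theorem~\ref{thm:subspace} and Lemma~\ref{lem:br-v}.
\item Any $\xi$ with $\xi S=S$ has order dividing $\gcd(4,26)=2$. Since $-1$ is a nonsquare in $\F_{27}$, $S$ has no nontrivial rotation about $0$.
\item Mills' relation $4\nu+1=3^{\mu\kappa}$ with $\kappa\mid 3$ forces $\kappa=1$ and $\nu=2$, so $(\F_3^\times)^2$ is trivial as well.
\end{enumerate}
So neither rotation exists and no translation can be produced. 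Your argument yields nothing here, although the lemma has content in this case.

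\textbf{The remaining steps are also unsupported.} Applying Lemma~\ref{lem:br-1} to $F-\gamma_0$ only gives $S-\gamma_0=a\,(\mathcal U')^{v'}+b$ with $b$ unknown. Nothing ties the centre of that representation to the value with the smallest fibre, and that tie is precisely what the lemma asserts, so this fallback is circular. In the endgame, invariance under an additive group $G$ gives only $\#G\mid\#S$. It does not give that $\#S$ is a power of $p$, let alone that $S$ is a field.

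\textbf{What Mills' factorization does give directly is an exponent condition, and it partly works.}
\begin{enumerate}[(a)]
\item Write $T(y+\gamma_0)=y\sum_j u_jy^j$. Cancel $L^{\nu-1}N^Q$ in Eq.~\eqref{MVSP} and differentiate $L\sum_j u_j(L^\nu N^Q)^j=\theta\nu(x^q-x)A^Q$. This gives $u_j(1+\nu j)=0$ for $j\ge1$.
\item In particular $u_{r-1}=0$, because $1+\nu(r-1)\equiv-\nu\not\equiv0\pmod p$.
\item On the other side, write $L_{\mathcal U}(y)=\sum_i a_iy^{p^{ki}}$. From $T(y^v)=y^{v-1}L_{\mathcal U}(y)$ you get $T(y)=\sum_i a_iy^{1+(p^{ki}-1)/v}$, whose $y^r$-coefficient vanishes for $r\ge2$.
\item Comparing $y^r$-coefficients yields $(r+1)\gamma_0=0$.
\end{enumerate}
This proves the lemma when $v\not\equiv1\pmod p$, including the example above. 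The case $v\equiv 1\pmod p$, where $p\mid r+1$, needs finer information that your proposal does not supply.
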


In the next two lemmas, we provide connections between certain MVSPs and their powers.

\begin{lemma}[{\cite[Lemma~2.4]{BR}}]\label{lem:br-v}
    Let $\mathcal U$ and $v$ be as in Theorem~\ref{thm:conjecture}, and let $F\in\F_q[x]$. Then 
    $F\in \mathcal P(\mathcal U, q)$ if and only if $F^v\in\mathcal P(\mathcal U^v, q)$.
\end{lemma}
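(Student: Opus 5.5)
The plan is a direct counting argument. It rests on one identity,
\[
\#\mathcal U^v=1+\frac{\#\mathcal U-1}{v},
\]
together with the trivial lower bound \eqref{eq:range} applied to both $F$ and $F^v$. No structural result such as Theorem~\ref{thm:mills} or Theorem~\ref{MillsBor} should be needed.

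\emph{Step 1 (fibres of the power map).} Since $v\mid p^k-1$, all $v$-th roots of unity lie in $\F_{p^k}^\times$. Because $\mathcal U$ is an $\F_{p^k}$-space, $\mathcal U\setminus\{0\}$ is stable under multiplication by these roots. For $u,w\in\F_q^\times$ we have $u^v=w^v$ if and only if $w=\zeta u$ with $\zeta^v=1$. Hence the map $u\mapsto u^v$ on $\mathcal U\setminus\{0\}$ has all fibres of size exactly $v$, which gives the displayed identity. For the same reason, if $c\in\F_q$ satisfies $c^v\in\mathcal U^v$, then $c\in\mathcal U$. Indeed, either $c=0$, or $c^v=u^v$ for some $u\in\mathcal U$, in which case $c=\zeta u\in\mathcal U$. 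Write $N=\#\mathcal U^v=r+1>2$ and $d=\deg F$.

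\emph{Step 2 (forward direction).} Let $F\in\mathcal P(\mathcal U,q)$, so $\#\mathcal U=\lceil q/d\rceil$. Then $\#\mathcal U-1<q/d$, and also $V_{F^v}=(V_F)^v=\mathcal U^v$.

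First I check that $\deg F^v=vd<q$, so that $F^v$ is eligible to be an MVSP. Since $N>2$, Step 1 gives $\#\mathcal U-1\ge 2v$. Therefore
\[
vd<\frac{vq}{\#\mathcal U-1}\le \frac q2 .
\]
Next, $N-1=(\#\mathcal U-1)/v<q/(vd)$, so $N\le\lceil q/(vd)\rceil$. Combined with \eqref{eq:range} for $F^v$, this forces $N=\lceil q/(vd)\rceil$. Hence $F^v\in\mathcal P(\mathcal U^v,q)$.

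\emph{Step 3 (converse).} Let $F^v\in\mathcal P(\mathcal U^v,q)$; by definition $vd<q$, so $d<q$ as well. For each $a\in\F_q$ we have $F(a)^v\in\mathcal U^v$, so Step 1 gives $F(a)\in\mathcal U$. Thus $V_F\subseteq\mathcal U$.

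The MVSP equation for $F^v$ reads $N=\lceil q/(vd)\rceil$. This gives $N-1<q/(vd)$, that is, $\#\mathcal U-1<q/d$, and hence $\#\mathcal U\le\lceil q/d\rceil$. Combining with \eqref{eq:range} for $F$ yields
\[
\#\mathcal U\le\left\lceil q/d\right\rceil\le\#V_F\le\#\mathcal U .
\]
All inequalities are therefore equalities. So $V_F=\mathcal U$ and $F$ is an MVSP, i.e.\ $F\in\mathcal P(\mathcal U,q)$.

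The only delicate point is the degree condition $vd<q$ in the forward direction, which is required by the definition of MVSP. It is handled by the hypothesis $\#\mathcal U^v>2$, as shown in Step 2. Everything else is the fibre count of Step 1.
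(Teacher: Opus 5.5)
Your proof is correct: the fibre count for $u\mapsto u^v$ on $\mathcal U\setminus\{0\}$, the lifting $c^v\in\mathcal U^v\Rightarrow c\in\mathcal U$, and the ceiling estimates in both directions (including your explicit check that $\deg F^v<q$) all hold. The paper gives no proof of its own here and simply cites \cite[Lemma~2.4]{BR}, but it uses the same fibre count and ceiling comparison in the proof of Lemma~\ref{lem:reduction} and in the sufficiency part of the proof of Theorem~\ref{thm:explicit}, so your route is essentially the expected one.
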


\begin{lemma}\label{lem:reduction}
    Let $\mathcal U, v$ and $k$ be as in Theorem~\ref{thm:conjecture}. Let $w$ be a positive integer. 
    If $vw$ divides $p^k-1$, $\# \mathcal U^{vw}>2$, and 
    $$\mathcal P(\mathcal U^{vw}, q)=\{f^{vw}: f\in \mathcal P( \mathcal U, q)\},$$
    then $\mathcal P(\mathcal U^{v}, q)=\{f^{v}: f\in \mathcal P(\mathcal U, q)\}$.
\end{lemma}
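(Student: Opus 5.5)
The inclusion $\{f^v: f\in\mathcal P(\mathcal U,q)\}\subseteq \mathcal P(\mathcal U^v,q)$ is exactly one direction of Lemma~\ref{lem:br-v}, so the substance is the reverse inclusion. Let $F\in\mathcal P(\mathcal U^v,q)$. I want to produce $f\in\mathcal P(\mathcal U,q)$ with $f^v=F$.

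First, set $G=F^w$. Since the value set of $F$ is $\mathcal U^v$, the value set of $G$ is $(\mathcal U^v)^w=\mathcal U^{vw}$, which has more than two elements. Next I check that $G$ is an MVSP, using Theorem~\ref{MillsBor}. Write $T(x)=\prod_{a\in\mathcal U^v}(x-a)$, so that $T(F)=\theta(x^q-x)F'$. The set $\mathcal U^v$ is stable under multiplication by $\F_{p^k}^{\times v}$, so the map $y\mapsto y^w$ is exactly $w$-to-$1$ on $\mathcal U^v\setminus\{0\}$. Consequently $T(x)=x\,\widetilde T(x^w)$ for a monic $\widetilde T$ whose roots are the values of $\mathcal U^{vw}$. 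Since $G'=wF^{w-1}F'$, we obtain
$$x\widetilde T(x)\big|_{x=G}=F^{w}\,\widetilde T(F^w)=F^{w-1}T(F)=F^{w-1}\theta(x^q-x)F'=\theta w^{-1}(x^q-x)G'.$$
Here $w$ is invertible in $\F_q$ because $w\mid p^k-1$. This says $\widetilde T_0(G)=\theta'(x^q-x)G'$ with $\widetilde T_0=x\widetilde T$ monic. Whether $\widetilde T_0$ vanishes exactly on $\mathcal U^{vw}$ needs checking: $0\in\mathcal U$, and $\widetilde T$ may also have the root $0$ with multiplicity. Alternatively I can verify minimality directly via degrees, since $\#\mathcal U^{vw}=(\#\mathcal U^v-1)/w+1$ and $\deg G=w\deg F$. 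I would use this degree count, $\lceil q/(w\deg F)\rceil$ versus $(r)/w+1$ with $r=\#\mathcal U^v-1$, as the cleaner route, exploiting that for an MVSP with $r+1>2$ values one has $\deg F=\lceil q/(r+1)\rceil$-compatible relations from Mills's theorem, namely $\deg F\cdot r\approx q$ exactly as in $(\deg F)(r)=q-\#L$-type counts.

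By hypothesis, then, $G=F^w=f^{vw}$ for some $f\in\mathcal P(\mathcal U,q)$. Hence $F^w=(f^v)^w$ in $\F_q[x]$, and since $\F_q[x]$ is a UFD with both sides taken in an algebraic closure, $F=\zeta f^v$ for some $w$-th root of unity $\zeta\in\F_q$. The remaining step is to absorb $\zeta$. Because $1\in\mathcal U$ and $\mathcal U$ is an $\F_{p^k}$-space, the values of $F$ include $1$ (indeed the value set is $\mathcal U^v\ni1$). The value set of $\zeta f^v$ is $\zeta\,\mathcal U^v$, so $\zeta\mathcal U^v=\mathcal U^v$. Since $\zeta^{w}=1$ and $vw\mid p^k-1$, we get $\zeta\in\F_{p^k}^{\times v}$, say $\zeta=c^v$ with $c\in\F_{p^k}^\times$. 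Then $F=(cf)^v$, and $cf\in\mathcal P(\mathcal U,q)$ because $c\,\mathcal U=\mathcal U$ and scaling preserves minimality.

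The main obstacle is the middle step, showing that $F^w$ is again an MVSP. I expect the differential-equation computation above to be the most robust way, with care taken about the root $0$ and the exact count of fibers of $y\mapsto y^w$ on $\mathcal U^v$. If that proves delicate, I would fall back on the fiber count: MVSP fibers of $F$ over the nonzero values all have equal size, so the fibers of $G$ over its nonzero values have size $w$ times as large, and the inequality $\#V_G\ge\lceil q/\deg G\rceil$ then becomes an equality.
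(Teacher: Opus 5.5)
Your proof is correct and follows the paper's outline: show $F^w\in\mathcal P(\mathcal U^{vw},q)$, apply the hypothesis to get $F=\zeta f^v$ with $\zeta^w=1$, and absorb $\zeta$. The absorption is exactly the paper's: since $vw\mid p^k-1$, the order-$w$ subgroup of the cyclic group $\F_{p^k}^\times$ lies in $(\F_{p^k}^\times)^v$, so $\zeta=c^v$ and $f$ is replaced by $cf$. Your observation $\zeta\,\mathcal U^v=\mathcal U^v$ is true but not needed.

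The real difference is how you show $G=F^w$ is an MVSP.
\begin{itemize}
\item You use the ``if'' direction of Theorem~\ref{MillsBor}: factor $T(x)=x\widetilde T(x^w)$ and derive $(x\widetilde T)(G)=\theta w^{-1}(x^q-x)G'$. This is already complete, because that direction only requires \emph{some} monic polynomial and a nonzero constant. So your worry about the root $0$ is moot. In fact $\deg T=1+w\deg\widetilde T$ shows that the roots of $\widetilde T$ are exactly the \emph{nonzero} elements of $\mathcal U^{vw}$, not all of them as you wrote. Hence $x\widetilde T$ is precisely the vanishing polynomial of $\mathcal U^{vw}$.
\item The paper uses only the definition of an MVSP. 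Put $r=\#\mathcal U^v-1=w(\#\mathcal U^{vw}-1)$. From $r<q/\deg F\le r+1$ one gets $r/w<q/\deg G\le (r+1)/w\le r/w+1$, so $\lceil q/\deg G\rceil=r/w+1=\#\mathcal U^{vw}$.
\end{itemize}
The paper's version is the degree count you only gestured at. It needs nothing from Mills's theorem, so drop the remarks about ``$\deg F\cdot r\approx q$''.

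Both routes rest on the same fact: the $w$-th roots of unity act freely on $\mathcal U^v\setminus\{0\}$. The paper's is a self-contained one-line inequality. Yours invokes a much deeper characterization, but it shows explicitly how the equation $T(F)=\theta(x^q-x)F'$ transforms under $F\mapsto F^w$.

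Discard your fallback. It is not true in general that an MVSP has equal fibres over its nonzero values. For odd $s$ and $a\in\F_{s^3}^\times$, the nondegenerate ternary quadratic form $\mathrm{Tr}_{\F_{s^3}/\F_s}(ax^{1+s})$ lies in $\mathcal P(\F_s,s^3)$ and has fibres of both sizes $s^2+s$ and $s^2-s$ over nonzero values. In any case, that claim is not needed.
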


\begin{proof}
Set $\ell=\#\mathcal U^{w}-1$ and $h=\#\mathcal U^{vw}-1=\frac{\ell}{v}$. If $f\in \mathcal P(\mathcal U^{w}, q)$, then 
$$\ell+1=\left\lceil\frac{q}{\deg f}\right\rceil\iff \ell< \frac{q}{\deg f}\le \ell+1.$$ In particular, 
$h< \frac{q}{\deg f^{v}}\le h+1$ and we have $V_{f^{v}}=\mathcal U^{vw}$, that is, $f^v\in \mathcal P(\mathcal U^{vw}, q)$.
Therefore,  
$$\{f^v: f\in \mathcal P(\mathcal U^{w}, q)\}\subseteq \mathcal P(\mathcal U^{vw}, q).$$
Applying the preceding argument with $w=1$ gives the inclusion
$\{f^v: f\in \mathcal P(\mathcal U, q)\}\subseteq \mathcal P(\mathcal U^{v}, q)$. Conversely, fix $F\in \mathcal P(\mathcal U^v, q)$. From the first part of the proof, we obtain $F^{w}\in \mathcal P(\mathcal U^{vw}, q)$. By hypothesis, there exists $G\in \mathcal P(\mathcal U,q)$ such that $F^w=G^{vw}$. Hence $F=c_0G^v$, where $c_0^w=1$. Since $vw$ divides $p^k-1$ and $\F_{p^{k}}^\times$ is cyclic, it follows that $c_0=c^v$ for some $c\in \F_{p^k}^\times$. In particular, $F=(c\cdot G)^v$. Now, since $\mathcal U$ is an $\F_{p^k}$-vector space, we have $c\cdot \mathcal U=\mathcal U$. In particular, $G\in \mathcal P(\mathcal U,q)$ implies $c\cdot G\in \mathcal P(\mathcal U,q)$. Thus $F=(c\cdot G)^v$ with $c\cdot G\in\mathcal P(\mathcal U,q)$.
\end{proof}

\begin{remark}\label{rem:field}
Let $\mathcal U$ and $v$ be as in Theorem~\ref{thm:conjecture}. According to \cite[Remark~2.5]{BR}, Theorem~\ref{thm:conjecture} holds if $\mathcal U^v$ is a field. 
\end{remark}

\subsection{Hasse derivatives}

We recall basic properties of Hasse derivatives and conclude this
subsection with a lemma on their effect on zeros and poles of rational
functions. For the standard properties and further details on Hasse
derivatives, we refer to \cite[Chapter~5]{HKT}.

Let $K$ be a field, and let $K((x))$ denote the field of formal Laurent series
in the indeterminate $x$.
For $u\in\mathbb Z$ and $k\ge1$, we use the generalized binomial
coefficient
\[
\binom{u}{k}:=\frac{u(u-1)\cdots(u-k+1)}{k!}\in\mathbb Z,
\]
with $\binom{u}{0}=1$.

The following lemma records consequences of Lucas' theorem
\cite[Lemma~A.6]{HKT}; we omit its proof. For negative upper indices,
we use the fact
that, for each integer $a\ge1$ and $0\le k<p^a$, the residue of
$\binom{u}{k}$ modulo $p$ depends only on $u\bmod p^a$.

\begin{lemma}\label{lem:binomial}
Let $p$ be a prime and $s=p^e$, where $e\ge1$.
\begin{enumerate}[(i)]
    \item For $u\in\mathbb Z$ and $k\ge0$ with $s\nmid k$,
    we have $\binom{su}{k}\equiv0\pmod p$.

    \item Let $u\in\mathbb Z$. Then
    $\binom{u}{k}\equiv0\pmod p$ for $2\le k<s$
    if and only if $u\equiv0,1\pmod s$.

    \item If $M\ge2$ is an integer such that
    $\binom{-M}{k}\equiv0\pmod p$ for $2\le k\le M$,
    then $M=p^h-1$ for some integer $h\ge1$.
\end{enumerate}
\end{lemma}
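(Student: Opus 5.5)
For parts (i) and (ii) we apply Lucas' theorem directly, after first reducing negative upper indices to nonnegative ones. The fact quoted before the statement is what makes this reduction work: for $0\le k<p^a$, the residue of $\binom{u}{k}$ modulo $p$ depends only on $u \bmod p^a$. So, given $u\in\mathbb Z$ and $k$, we fix $a$ large enough that $p^a>k$ (and $a\ge e$). We then replace the upper index by its least nonnegative residue modulo $p^a$.

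For (i), this residue is still a multiple of $s$. Indeed, $su\bmod p^a=s\cdot(u\bmod p^{a-e})$ because $s\mid p^a$. Its last $e$ base-$p$ digits therefore vanish. Since $s\nmid k$, some digit of $k$ in those positions is nonzero. Lucas' theorem then produces a factor $\binom{0}{k_i}=0$.

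For (ii), we use $a=e$, since $k<s$. Thus $\binom{u}{k}\equiv\binom{u'}{k}$ with $u'=u\bmod s\in[0,s)$. If $u'\in\{0,1\}$, then $\binom{u'}{k}=0$ as integers for $k\ge2$. Conversely, if $u'\ge2$, we take $k=u'\in[2,s)$, which gives $\binom{u'}{u'}=1\not\equiv0$.

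Part (iii) is the only step that needs some care. Let $h\ge1$ satisfy $p^{h-1}\le M<p^h$; such $h$ exists because $M\ge2$. We choose $a>h$, so that $p^a>M$, and set $N=p^a-M$. Then $\binom{-M}{k}\equiv\binom{N}{k}\pmod p$ for all $k\le M$. Next we write
\[
N=(p^a-p^h)+R,\qquad R=p^h-M\in[1,\,p^h-p^{h-1}].
\]
Since $R<p^h$, the lowest $h$ base-$p$ digits of $N$ are exactly the digits of $R$. If $R=1$, then $M=p^h-1$ and we are done.

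Otherwise $R\ge2$, and we will exhibit some $k$ with $2\le k\le M$ whose base-$p$ digits are dominated by those of $R$. For such $k$, Lucas' theorem gives $\binom{N}{k}\not\equiv0$, a contradiction. Let $t$ be the position of the lowest nonzero digit $r_t$ of $R$; note $t\le h-1$. We split into three cases.
\begin{enumerate}[(a)]
\item If $t\ge1$, take $k=p^t$. Then $2\le p^t\le p^{h-1}\le M$.
\item If $t=0$ and $r_0\ge2$, take $k=2$. Then $2\le M$, and the digit $2$ is dominated by $r_0$.
\item If $t=0$ and $r_0=1$, then $R\ge2$ forces a nonzero digit in some position $t'$ with $1\le t'\le h-1$. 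Take $k=p^{t'}$. Then $2\le k\le p^{h-1}\le M$.
\end{enumerate}
In each case $\binom{N}{k}\equiv\prod_i\binom{N_i}{k_i}\not\equiv 0$, contradicting the hypothesis. Hence $R=1$, and $M=p^h-1$.

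The main obstacle is this last step: choosing a test index $k$ that is simultaneously at least $2$, at most $M$, and digit-dominated by $p^a-M$. Bounding $k$ by $p^{h-1}\le M$ handles all three cases uniformly.
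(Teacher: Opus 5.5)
Your proof is correct and takes the route the paper indicates; the paper omits the details. You first use the periodicity of $\binom{u}{k}$ modulo $p$ in $u \bmod p^a$ to reduce to nonnegative upper indices, then apply Lucas' theorem, and your digit-dominated test indices $k\in\{2,p^t,p^{t'}\}$ supply the omitted argument for (iii). The only unstated point is in case (b): there $r_0\ge2$ forces $p\ge3$, so $k=2$ really is a single base-$p$ digit dominated by $r_0$.
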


\begin{definition}\label{def:hasse}
For each $k\ge0$, the $k$-th Hasse derivative is the map
$\HD{k}:K((x))\to K((x))$ defined by
\[
\HD{k}\!\left(\sum_{u\ge u_0}a_u x^u\right)
=
\sum_{u\ge u_0}\binom{u}{k}a_u x^{u-k}.
\]
\end{definition}

Hasse derivatives satisfy the following product rule
\cite[Lemma~5.72, Eq.~(5.23)]{HKT}.

\begin{lemma}\label{lem:hasse-product}
For $f,g\in K((x))$ and $k\ge0$, we have
\begin{equation}\label{eq:hasse-product}
\HD{k}(fg)=\sum_{i=0}^k\HD{i}f\cdot\HD{k-i}g.
\end{equation}
\end{lemma}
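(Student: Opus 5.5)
The product rule \eqref{eq:hasse-product} is bilinear and $K$-linear in each argument. I would therefore reduce it to monomials and then to a binomial identity. Both sides of \eqref{eq:hasse-product} are $K$-bilinear in $(f,g)$, and $\HD{k}$ acts coefficientwise by Definition~\ref{def:hasse}. Once the monomial case is known, the general case follows by expanding $f=\sum_{u\ge u_0}a_ux^u$ and $g=\sum_{w\ge w_0}b_wx^w$.

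For the monomial case, take $f=x^u$ and $g=x^w$ with $u,w\in\mathbb Z$. The left side of \eqref{eq:hasse-product} is $\binom{u+w}{k}x^{u+w-k}$, and the right side is $\sum_{i=0}^k\binom{u}{i}\binom{w}{k-i}x^{u+w-k}$. So the claim is exactly the Vandermonde identity $\binom{u+w}{k}=\sum_{i}\binom{u}{i}\binom{w}{k-i}$ for arbitrary integers $u,w$, with the generalized binomial coefficients defined above. I would prove it as a polynomial identity. Both sides are polynomials in $(u,w)$ with rational coefficients, and they agree for all nonnegative integers $u,w$ by counting $k$-subsets of a disjoint union. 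A polynomial in two variables vanishing on $\mathbb N^2$ is zero, so the identity holds for all $u,w\in\mathbb Z$ in $\mathbb Z$, and hence after mapping into $K$.

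The only step needing care is passing from monomials to Laurent series, since the sums are infinite. Write $fg=\sum_{m}c_mx^m$ with $c_m=\sum_{u+w=m}a_ub_w$; this is a finite sum because $u\ge u_0$ and $w\ge w_0$. The coefficient of $x^{m-k}$ in $\HD{k}(fg)$ is $\binom{m}{k}c_m$. The coefficient of $x^{m-k}$ on the right side is $\sum_{u+w=m}a_ub_w\sum_i\binom{u}{i}\binom{w}{k-i}$, again a finite sum. The two agree by Vandermonde. All rearrangements are legitimate, because each coefficient involves only finitely many terms and the exponents are bounded below.
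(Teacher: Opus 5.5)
Your proof is correct. The paper does not prove this lemma itself; it quotes the product rule from \cite[Lemma~5.72, Eq.~(5.23)]{HKT}. Your argument therefore supplies a self-contained proof where the paper relies on a reference.

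The essential points are all handled properly:
\begin{enumerate}[(a)]
\item You reduce the statement to the Vandermonde identity $\binom{u+w}{k}=\sum_{i=0}^k\binom{u}{i}\binom{w}{k-i}$.
\item You extend that identity from $u,w\ge0$ to all $u,w\in\mathbb Z$ by the polynomial identity principle in $\mathbb{Q}[u,w]$. This is exactly what is needed, since Laurent series involve negative exponents and the subset-counting argument alone does not cover them.
\item You compare coefficients one at a time, which is legitimate because each coefficient of a Cauchy product in $K((x))$ is a finite sum.
\end{enumerate}
Your first paragraph's appeal to bilinearity would not by itself justify passing from monomials to infinite sums. Your last paragraph does this step correctly, so there is no gap.

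A different standard route is to show that the Taylor map $f\mapsto\sum_{k\ge0}\HD{k}(f)\,t^k$ (formally $f(x)\mapsto f(x+t)$) is a ring homomorphism $K((x))\to K((x))[[t]]$, and then read off the coefficient of $t^k$. On $K((x))$ this requires expanding $(x+t)^u$ for negative $u$, which rests on the same generalized binomial identity. Your version isolates that ingredient directly and elementarily.
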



By the product rule, Hasse derivatives preserve $K(x)\subseteq K((x))$.
Their restrictions to $K(x)$ satisfy the following properties, which
follow directly from Definition~\ref{def:hasse},
Lemma~\ref{lem:binomial}(i), and Lemma~\ref{lem:hasse-product}.

\begin{remark}\label{rem:01}
Suppose $\operatorname{char}K=p>0$, and let $s=p^e$,
where $e\ge1$.
\begin{enumerate}[(i)]
    \item For $F\in K(x)$, we have
    $\HD{k}(F^s)=0$ for $1\le k<s$.

    \item For $A,B\in K(x)$, we have
    $\HD{k}(A^sx+B^s)=0$ for $2\le k<s$.
\end{enumerate}
\end{remark}







\begin{remark}\label{rem:vanish}
For a nonzero rational function $f\in K(x)$, let $v_\alpha(f)$
denote its order at $\alpha\in K\cup\{\infty\}$.
We set $v_\alpha(0)=+\infty$.

Using the product rule and the binomial expansion for polynomials,
one can check that Hasse derivatives commute with translation on
rational functions:
$\HD{k}(f(x+\alpha))=(\HD{k}f)(x+\alpha)$ for $\alpha\in K$.
Thus, for nonzero $f\in K(x)$, applying Definition~\ref{def:hasse}
to the Laurent expansion of $f(x+\alpha)$ at zero and replacing
$x$ by $x-\alpha$, we obtain
\[
f=\sum_{j\ge m}c_j(x-\alpha)^j
\quad\Longrightarrow\quad
\HD{k}f=\sum_{j\ge m}\binom jk c_j(x-\alpha)^{j-k}
\quad\text{in }K((x-\alpha)),
\]
where $m=v_\alpha(f)$ and $c_m\ne0$. Consequently,
\[
v_\alpha(\HD{k}f)\ge v_\alpha(f)-k.
\]
Moreover, $\HD{k} f$ is regular at $\alpha$ whenever $f$ is,
since $\binom{j}{k}=0$ for $0\le j<k$.

At infinity, write $f=g/h$ with $g,h\in K[x]$ nonzero.
By Definition~\ref{def:hasse},
$v_\infty(\HD{i}a)\ge i-\deg a$ for every nonzero polynomial $a$.
We proceed by induction on $k$, the case $k=0$ being immediate.
For $k\ge1$, Eq.~\eqref{eq:hasse-product} gives
\[
h\HD{k}f
=\HD{k}g-\sum_{i=1}^k\HD{i}h\,\HD{k-i}f.
\]
By induction, each term in the sum has valuation at least
$(i-\deg h)+(v_\infty(f)+k-i)=k-\deg g$.
The same bound holds for $\HD{k}g$. Dividing by $h$, we obtain
\[
v_\infty(\HD{k}f)\ge k-\deg g+\deg h=v_\infty(f)+k.
\]

Suppose $\operatorname{char}K=p>0$.
If $f$ has a pole of order $M\ge1$ at a finite point
and $\HD{k} f=0$, then $\binom{-M}{k}\equiv0\pmod p$.
If instead $f$ has a pole of order $M$ at infinity
and $\HD{k} f=0$, then $\binom{M}{k}\equiv0\pmod p$.
\end{remark}

We use the convention that the zero polynomial has degree $-\infty$.
For a nonzero polynomial $N\in K[x]$, we write $\operatorname{rad}(N)$
for the product of its distinct monic irreducible factors.
The preceding estimates yield the following bound.






\begin{lemma}\label{lem:hasse}
Let $N\in K[x]\setminus\{0\}$, and set $n=\deg N$,
$R=\operatorname{rad}(N)$, and $\rho=\deg R$.
Let $d,r$ be nonnegative integers, and let $Z\in K[x]$
satisfy $\deg Z\le d$.
Then, for all integers $k\ge0$ and $r_0$ with $0\le r_0\le r$,
\begin{equation}\label{eq:cleared-Hasse}
H_k(Z):=N^rR^k\HD{k}\!\left(\frac{Z}{N^{r_0}}\right)
\in K[x],
\end{equation}
and $\deg H_k(Z)\le d+(r-r_0)n+(\rho-1)k$.
\end{lemma}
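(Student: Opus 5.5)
We prove the two claims of Lemma~\ref{lem:hasse} simultaneously by induction on $k$, using the product rule of Lemma~\ref{lem:hasse-product} for polynomiality and the valuation estimates of Remark~\ref{rem:vanish} for the degree bound. In fact a direct valuation argument is cleaner than an explicit induction, so we organize it as follows. Set $f=Z/N^{r_0}$; we may assume $Z\neq 0$, since otherwise $H_k(Z)=0$ and everything is trivial. The function $H_k(Z)=N^rR^k\HD{k}(f)$ is a rational function, and its poles can only occur at roots of $N$ (in an algebraic closure) or at infinity. This is because $f$ is regular away from the roots of $N$, so $\HD{k}f$ is regular there by Remark~\ref{rem:vanish}. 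To see that $H_k(Z)\in K[x]$, it therefore suffices to check regularity at each root $\alpha$ of $N$. We may pass to $\overline K$, where Hasse derivatives are compatible with base extension, since membership of $H_k(Z)$ in $K[x]$ follows once it lies in $\overline K[x]\cap K(x)$.

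\textbf{Finite points.} Let $\alpha$ be a root of $N$ of multiplicity $m\ge1$. Then $v_\alpha(R)=1$ and $v_\alpha(f)\ge -r_0m$. Remark~\ref{rem:vanish} gives $v_\alpha(\HD{k}f)\ge -r_0m-k$. Hence
\[
v_\alpha(H_k(Z))\ge rm+k-r_0m-k=(r-r_0)m\ge0,
\]
using $r_0\le r$. So $H_k(Z)$ has no finite poles and is a polynomial. Here it matters that the valuation bound is applied at each $\alpha$ via the translation-invariance recorded in Remark~\ref{rem:vanish}, which is valid over $\overline K$.

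\textbf{Degree.} For a polynomial, $\deg H=-v_\infty(H)$. At infinity, Remark~\ref{rem:vanish} gives $v_\infty(\HD{k}f)\ge v_\infty(f)+k=r_0n-\deg Z+k$. Therefore
\[
v_\infty(H_k(Z))\ge -rn-\rho k+r_0n-\deg Z+k,
\]
so
\[
\deg H_k(Z)\le \deg Z+(r-r_0)n+(\rho-1)k\le d+(r-r_0)n+(\rho-1)k.
\]
If $H_k(Z)=0$, the bound holds trivially under the convention $\deg 0=-\infty$.

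\textbf{Main obstacle.} The only delicate point is justifying the local bound $v_\alpha(\HD{k}f)\ge v_\alpha(f)-k$ at roots $\alpha$ not lying in $K$. Remark~\ref{rem:vanish} states it for $\alpha\in K$, so we must check that $\HD{k}$ commutes with the inclusion $K(x)\subseteq\overline K(x)$. This follows from Definition~\ref{def:hasse}, since both sides are computed by the same Laurent coefficients.

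If one prefers to avoid extending scalars, there is an alternative route: an induction on $k$ via $N^{r_0}\HD{k}f=\HD{k}Z-\sum_{i\ge1}\HD{i}(N^{r_0})\HD{k-i}f$, together with the fact that $R^i\HD{i}(N^{r_0})/N^{r_0}$ is a polynomial. That fact holds because $\HD{i}(P^{e})$ is divisible by $P^{e-i}$ for each irreducible $P$. This is the step I would verify first if the extension argument is deemed insufficient.
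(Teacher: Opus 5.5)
Your proof is correct and is essentially the paper's argument: you bound $v_\alpha(H_k(Z))\ge (r-r_0)v_\alpha(N)\ge 0$ at each root $\alpha\in\overline K$ of $N$ using $v_\alpha(\HD{k}f)\ge v_\alpha(f)-k$, and you bound $v_\infty(H_k(Z))$ using $v_\infty(\HD{k}f)\ge v_\infty(f)+k$. The one small difference is that the paper uses only $v_\alpha(R)\ge 1$, whereas you assert $v_\alpha(R)=1$; your equality can fail over an imperfect $K$ (e.g.\ $N=x^p-t$ over $\F_p(t)$), but your estimate needs only the inequality, so your conclusion is unaffected.
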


\begin{proof}
If $H_k(Z)=0$, the result is immediate.
Assume that $H_k(Z)\ne0$, and set $f=Z/N^{r_0}$. For each root $\lambda\in\overline{K}$ of $N$, we have
$v_\lambda(R)\ge1$, and Remark~\ref{rem:vanish} gives
\[
\begin{aligned}
v_\lambda(H_k(Z))
&=r\,v_\lambda(N)+k\,v_\lambda(R)+v_\lambda(\HD{k} f)\\
&\ge (r-r_0)v_\lambda(N)+k\bigl(v_\lambda(R)-1\bigr)\\
&\ge0.
\end{aligned}
\]
Away from the roots of $N$, $f$ and its Hasse derivatives
have no finite poles.
Thus $H_k(Z)$ has no finite poles and belongs to $K[x]$. 

The bound on $\deg H_k(Z)$ follows directly from
$\deg H_k(Z)=-v_\infty(H_k(Z))$,
$v_\infty(\HD{k} f)\ge v_\infty(f)+k$, and
$v_\infty(f)=r_0n-\deg Z$.
\end{proof}




\section{A general approach to Mills' equation}\label{sec:mills-equation}

In earlier works on the complete characterization of MVSPs $F\in\F_{p^i}[x]$ for $i=3,4$, the proofs essentially reduced to the case $V_F=\mathcal C_r\cup\{0\}$, where $\mathcal C_r$ is the multiplicative subgroup of order $r$ and $r\mid p-1$. In this case, Theorems~\ref{thm:mills} and~\ref{MillsBor} give $F=(A^px+B^p)^{(p-1)/r}N^p$ together with
\begin{equation}\label{eq:model}
    (A^px+B^p)N^r=Ax^{p^\ell}+B,
    \qquad \ell=2,3.
\end{equation}
The remaining task is to prove that $N=H^v$ for some $H\in\F_q[x]$, where $v=(p-1)/r$. For these values of $\ell$, the available degree bounds allow one to apply the polynomial $ABC$ theorem to obtain $N=S^vN_1^p$; degree and arithmetic restrictions then force $N_1$ to be constant.

More generally, MVSPs $F\in\F_q[x]$ with value set $V_F=\mathcal C_r\cup\{0\}$ lead to equations of the form
\begin{equation}\label{eq:model*}
    (A^sx+B^s)N^r=Ax^{s^\ell}+B,
    \qquad s=p^e,\quad r\mid s-1.
\end{equation}
For arbitrary $\ell$ and $q$, two difficulties arise. The polynomial $ABC$ theorem alone may not yield the initial factorization $N=S^vN_1^s$, where $v=(s-1)/r$. Moreover, even when this factorization is available, $N_1$ need not be constant. Thus we need both a way to obtain the factorization and a framework in which the argument can be repeated for $N_1$.

Suppose that $N=S^vN_1^s$, with $S$ squarefree. Since $vr=s-1$, multiplying Eq.~\eqref{eq:model*} by $S$ gives
$$
    \big((SA)^sx+(SB)^s\big)N_1^{sr}=SAx^{s^\ell}+SB.
$$
Under suitable degree bounds, comparison of exponents modulo $s$ shows that $SA=A_0^s+xA_1^s$ and $SB=B_0^s+xB_1^s$. Substituting these expressions, separating the terms with exponents congruent to $0$ and $1$ modulo $s$, and taking $s$-th roots, we obtain
\[
\begin{aligned}
    (B_0^s+xB_1^s)N_1^r&=A_0x^{s^{\ell-1}}+B_0,\\
    (A_0^s+xA_1^s)N_1^r&=A_1x^{s^{\ell-1}}+B_1.
\end{aligned}
\]
Thus one equation produces two coupled equations for $N_1$, with $\ell$ replaced by $\ell-1$. Unlike Eq.~\eqref{eq:model*}, each new equation uses different coefficient pairs on its two sides, but all coefficients belong to the common family $\{A_0,A_1,B_0,B_1\}$. The structure to preserve under descent is therefore that of a family of equations.

Our goal is to formalize this procedure in a setting that allows iteration. Hasse derivatives of suitable rational functions yield congruence restrictions on root multiplicities, giving $N=S^vN_1^s$ with $S$ squarefree. We then show that the resulting family of equations for $N_1$ satisfies the same structural and degree conditions. Iteration forces the residual factor to become constant, yielding $N=cH^v$.

Hereafter, we use the following
hypotheses and notation:
\begin{itemize}
    \item $s=p^e>2$ is a power of a prime $p$;
    \item $r\ge 2$ is a divisor of $s-1$ and $e=\mathrm{ord}_rp$;
    \item $v=\frac{s-1}{r}$.
\end{itemize}

We now introduce the families of equations used in the descent.
\begin{definition}\label{def:type2}
Let $\ell$ be a positive integer and  $N\in\F_q[x]$ be nonzero. A {\em covering system} for $(N,r)$ of {\em depth} $\ell$ is a family $\mathcal F=\{P_1,Q_1,\ldots,P_t,Q_t\}$ of polynomials over $\F_q$ satisfying the following conditions:
\begin{enumerate}[(i)]
    \item For each  $a$ with $1\le a\le t$, there is a polynomial identity
    $$
        E_a:\; (Y_a^s+xZ_a^s)N^r
        =P_ax^{s^\ell}+Q_a,
    $$
    where $Y_a,Z_a\in\mathcal F$.

    \item For every root $\lambda$ of $N$, there is an index $a$ with $1\le a\le t$ such that $P_a(\lambda)\ne0$.

    \item If $d$ is the largest degree among the polynomials in $\mathcal F$ and $n=\deg N$, then
    $$
        d+rn<s^\ell.
    $$
\end{enumerate}
\end{definition}
Our goal is to prove the following statement.

\begin{center}
    {\em If $(N,r)$ admits a covering system of depth $\ell\ge1$, then $N=cH^v$ for some $c\in\F_q^\times$ and $H\in\F_q[x]$.}
\end{center}

The proof has two steps: assuming that $(N,r)$ admits a covering system, we first obtain a factorization $N=S^vN_1^s$ with $S$ squarefree, and then show that $(N_1,r)$ admits a covering system of depth $\ell-1$ when $\ell>1$. The following lemma establishes the factorization.

\begin{lemma}\label{lem:0-1}
Let $N\in\F_q[x]$ be nonconstant and suppose that $(N,r)$ admits a covering system of depth $\ell\ge1$. Then every root of $N$ has multiplicity congruent to $0$ or $v$ modulo $s$. Equivalently, $N=S^vN_1^s$ for some $S,N_1\in\F_q[x]$ with $S$ monic and squarefree.
\end{lemma}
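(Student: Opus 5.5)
The plan is to fix a root $\lambda\in\overline{\F}_q$ of $N$ of multiplicity $m\ge1$ and to choose, by Definition~\ref{def:type2}(ii), an index $a$ with $P_a(\lambda)\neq0$. Dividing $E_a$ by $N^r$ gives
\[
Y_a^s+xZ_a^s=\frac{P_a}{N^r}\,x^{s^\ell}+\frac{Q_a}{N^r}
\]
in $\F_q(x)$. We then apply $\HD{k}$ for $2\le k<s$. By Remark~\ref{rem:01}(ii) the left-hand side is annihilated. On the right, $x^{s^\ell}$ is an $s^\ell$-th power, so $\HD{i}(x^{s^\ell})=0$ for $1\le i<s^\ell$. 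Since $k<s\le s^\ell$, the product rule (Lemma~\ref{lem:hasse-product}) therefore gives
\[
x^{s^\ell}\,\HD{k}\!\Bigl(\frac{P_a}{N^r}\Bigr)+\HD{k}\!\Bigl(\frac{Q_a}{N^r}\Bigr)=0 .
\]

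Next we clear denominators with Lemma~\ref{lem:hasse}, taking $r_0=r$ and $Z\in\{P_a,Q_a\}$. This yields the polynomial identity
\[
x^{s^\ell}H_k(P_a)=-H_k(Q_a),\qquad \deg H_k(Q_a)\le d+(\rho-1)k .
\]
If we know that $d+(\rho-1)k<s^\ell$, then comparing degrees forces $H_k(P_a)=H_k(Q_a)=0$, and hence $\HD{k}(P_a/N^r)=0$ for all $2\le k<s$. This degree comparison is the step I expect to be the main obstacle. Condition~(iii) only provides $d+rn<s^\ell$, while a crude bound gives $(\rho-1)k\le(n-1)(s-1)$, which may exceed $rn$. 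I would first try to sharpen the estimate by working with the exact pole orders. The denominator of $\HD{k}(Q_a/N^r)$ has valuation $rv_\mu(N)+k$ at each root $\mu$, and these are not all of full size; combining this with the valuation at infinity in Remark~\ref{rem:vanish} should bring the bound down to $\deg Q_a+rn-k$ order of magnitude. Failing that, I would make an induction on $k$ in which the vanishing of the lower derivatives already constrains the multiplicities, improving the estimate for the next value of $k$.

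Granting the vanishing, the local analysis is straightforward. Because $P_a(\lambda)\ne0$, the function $P_a/N^r$ has a pole of exact order $M=rm\ge2$ at $\lambda$. By Remark~\ref{rem:vanish}, the leading Laurent coefficient of $\HD{k}(P_a/N^r)$ at $\lambda$ is $\binom{-M}{k}c_{-M}$ with $c_{-M}\neq0$. Hence $\binom{-rm}{k}\equiv0\pmod p$ for $2\le k<s$, and Lemma~\ref{lem:binomial}(ii) gives $-rm\equiv0$ or $1\pmod s$. In the first case $\gcd(r,s)=1$ gives $m\equiv0\pmod s$. In the second case $rm\equiv-1\equiv rv\pmod s$, since $rv=s-1$, and cancelling $r$ gives $m\equiv v\pmod s$.

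For the factorization, let $S$ be the product of the monic irreducible factors of $N$ over $\F_q$ whose multiplicity is $\equiv v\pmod s$. Then $S$ is monic and squarefree, because conjugate roots share multiplicities. Every root of $N/S^v$ has multiplicity divisible by $s$. Since $\F_q$ is perfect, $N/S^v=N_1^s$ for some $N_1\in\F_q[x]$, with the leading constant absorbed into $N_1$ as an $s$-th root. The converse direction of the equivalence is immediate: the roots of $S^vN_1^s$ have multiplicity $v$ or $v+js$ if they divide $S$, and $js$ otherwise.
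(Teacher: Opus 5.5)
Your setup, the clearing of denominators via Lemma~\ref{lem:hasse}, the local computation at $\lambda$ with Lemma~\ref{lem:binomial}(ii), and the final factorization all agree with the paper. The gap is the step you flagged yourself. You never establish $d+k(\rho-1)<s^\ell$ for $2\le k<s$, and without it nothing forces $\HD{k}(P_a/N^r)=0$.

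Your first remedy cannot work. Suppose $f=\HD{k}(P_a/N^r)\neq0$. The identity $x^{s^\ell}f=-\HD{k}(Q_a/N^r)$ and Remark~\ref{rem:vanish} give $f$ a zero of order at least $s^\ell+rn-d+k$ at infinity. Its poles have total order at most $\sum_\mu(r\,v_\mu(N)+k)=rn+k\rho$. So zero/pole counting excludes $f\ne0$ only when $s^\ell>d+k(\rho-1)$, which is the same inequality as before. At a root $\mu$ with $P_a(\mu)\neq0$, the pole order of $f$ drops below $r\,v_\mu(N)+k$ only if $\binom{-r v_\mu(N)}{k}\equiv0\pmod p$, and that is exactly what you are trying to prove. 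So you cannot avoid bounding $\rho$, and condition (iii) alone does not bound it. For example, if $v>1$ and $N$ is squarefree of degree $n\ge2$, then $(s-1)(\rho-1)=(s-1)(n-1)\ge rn$, so the needed inequality at $k=s-1$ does not follow from $d+rn<s^\ell$.

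Your second remedy points the right way but omits the mechanism that makes it work: Lemma~\ref{lem:binomial}(iii) combined with $e=\mathrm{ord}_rp$. The argument runs as follows.
\begin{enumerate}[(a)]
\item Let $j$ be the smallest multiplicity of a root of $N$, and suppose $j<v$, so that $2\le rj<s$.
\item Since $j\rho\le n$, for $2\le k\le rj$ you get $d+k(\rho-1)<d+rj\rho\le d+rn<s^\ell$. Your argument then goes through for these $k$ and yields $\binom{-rj}{k}\equiv0\pmod p$ for all $2\le k\le rj$, at a root of multiplicity $j$.
\item Lemma~\ref{lem:binomial}(iii) forces $rj=p^u-1$. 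Since $r\mid p^u-1$ and $e=\mathrm{ord}_rp$, we get $e\mid u$, hence $rj\ge s-1=rv$, a contradiction.
\item So every root has multiplicity at least $v$, and $\rho\le n/v$. Then $k(\rho-1)<(s-1)\rho=rv\rho\le rn$ for every $k<s$.
\end{enumerate}
This gives the degree separation on the full range $2\le k<s$, and the rest of your proof then applies unchanged.
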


\begin{proof}
Let $\mathcal F=\{P_1,Q_1,\ldots,P_t,Q_t\}$ be a covering system for $(N,r)$ of depth $\ell\ge1$. For each index $a$ with $1\le a\le t$, we have
\begin{equation}\label{eq:Ea}
    E_a:\; (Y_a^s+xZ_a^s)N^r=P_ax^{s^\ell}+Q_a,
\end{equation}
where $Y_a,Z_a\in\mathcal F$. Let $n=\deg N$ and let $d$ be the largest degree among the polynomials in $\mathcal F$. By definition, $d+rn<s^\ell$.

By Remark~\ref{rem:01}(ii), $\HD{k}(Y_a^s)=\HD{k}(xZ_a^s)=0$ for $2\le k<s$. Dividing Eq.~\eqref{eq:Ea} by $N^r$ and applying the $k$-th Hasse derivative, we obtain, by Remark~\ref{rem:01}(i) and the product rule~\eqref{eq:hasse-product},
\begin{equation}\label{eq:comb}
    \HD{k}(P_a/N^r)x^{s^\ell}+\HD{k}(Q_a/N^r)=0,
    \qquad 2\le k<s.
\end{equation}

Set $R=\mathrm{rad}(N)$ and $\rho=\deg R$. For $Z=P_a,Q_a$, Lemma~\ref{lem:hasse} shows that $H_k(Z):=N^rR^k\HD{k}(Z/N^r)$ is a polynomial of degree at most $d+k(\rho-1)$. Eq.~\eqref{eq:comb} therefore gives
\begin{equation}\label{eq:comb2}
    H_k(P_a)x^{s^\ell}+H_k(Q_a)=0,
    \qquad 2\le k<s.
\end{equation}

Let $\lambda$ be a root of $N$ of multiplicity $h$, and let $K$ be an integer with $2\le K<s$. Suppose that $d+k(\rho-1)<s^\ell$ for $2\le k\le K$. By the covering condition, we may choose an index $b$ such that $P_b(\lambda)\ne0$. By the degree bound above, the two terms in Eq.~\eqref{eq:comb2} have disjoint degree ranges, so $H_k(P_b)=H_k(Q_b)=0$. Hence $\HD{k}(P_b/N^r)=0$ for $2\le k\le K$. The function $P_b/N^r$ has a pole of order $rh$ at $\lambda$, so Remark~\ref{rem:vanish} gives $\binom{-rh}{k}\equiv0\pmod p$ for $2\le k\le K$.

We now prove the required restrictions on root multiplicities.
\begin{enumerate}[(a)]
\item Every root of $N$ has multiplicity at least $v$.

Let $j$ be the smallest multiplicity among the roots of $N$, and suppose that $j<v$. Then $j\rho\le n$ and $M:=rj$ satisfies $2\le M<s$. For $2\le k\le M$, we have
$$
    d+k(\rho-1)<d+rj\rho\le d+rn<s^\ell.
$$
The preceding argument with $h=j$ and $K=M$ gives $\binom{-M}{k}\equiv0\pmod p$ for $2\le k\le M$. By Lemma~\ref{lem:binomial}(iii), $M=p^u-1$ for some integer $u\ge1$. Since $r\mid M$ and $e=\mathrm{ord}_rp$, we have $e\mid u$. Hence $rj=M\ge p^e-1=rv$, contradicting $j<v$.

\item Every root of $N$ has multiplicity congruent to $0$ or $v$ modulo $s$.

By part~(a), $\rho\le n/v$. For $2\le k<s$, we therefore have
$$
    d+k(\rho-1)<d+(s-1)\rho\le d+rn<s^\ell.
$$
For any root of multiplicity $h$, the same argument with $K=s-1$ gives $\binom{-rh}{k}\equiv0\pmod p$ for $2\le k<s$. By Lemma~\ref{lem:binomial}(ii), $-rh\equiv0$ or $1\pmod s$. Since $rv=s-1$ and $\gcd(r,s)=1$, we conclude that $h\equiv0$ or $v\pmod s$.
\end{enumerate}

Taking $S$ as the monic squarefree product of the irreducible factors of $N$ with multiplicity congruent to $v$ modulo $s$ gives $N=S^vN_1^s$.
\end{proof}

The previous lemma enables us to recursively construct covering systems.

\begin{proposition}\label{prop:recursive}
     Let $N=S^{v}N_1^{s}$ be a nonzero polynomial over $\F_q$ such that $S$ is monic squarefree. If ${\ell}>1$ and $(N, r)$ admits a covering system of depth $\ell$, then $(N_1, r)$ admits a covering system of depth $\ell-1$.
\end{proposition}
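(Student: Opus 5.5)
The plan is to carry out the descent sketched before Definition~\ref{def:type2}, applying it simultaneously to every equation of a covering system. Let $\mathcal F=\{P_1,Q_1,\ldots,P_t,Q_t\}$ be a covering system for $(N,r)$ of depth $\ell>1$, with equations $E_a$. Set $n=\deg N$ and let $d$ be the maximal degree in $\mathcal F$.

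\emph{Step 1: every element of $S\cdot\mathcal F$ has exponents only $\equiv 0,1 \pmod s$.} Since $vr=s-1$, we have $SN^r=S^{s}N_1^{sr}$. Multiplying $E_a$ by $S$ gives
\[
\bigl((SY_a)^s+x(SZ_a)^s\bigr)N_1^{sr}=SP_a\,x^{s^\ell}+SQ_a .
\]
On the left, every monomial has exponent congruent to $0$ or $1$ modulo $s$. Next, $v\deg S\le n$ gives $\deg S\le n\le rn$, so $\deg(SQ_a)\le d+rn<s^\ell$. Hence the two terms on the right occupy disjoint degree ranges. Because $s^\ell\equiv 0\pmod s$, it follows that both $SP_a$ and $SQ_a$ involve only exponents $\equiv0,1\pmod s$. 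Every element of $\mathcal F$ is some $P_a$ or $Q_a$, so every $X\in\mathcal F$ satisfies
\[
SX=X_0^s+xX_1^s
\]
for unique polynomials $X_0,X_1$. In particular this applies to $Y_a$ and $Z_a$.

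\emph{Step 2: split each equation into two.} Write $SP_a=A_{a0}^s+xA_{a1}^s$ and $SQ_a=B_{a0}^s+xB_{a1}^s$, and substitute the decompositions of $SY_a$ and $SZ_a$. The left side of the displayed equation becomes
\[
\bigl(Y_{a0}^{s^2}+x^sY_{a1}^{s^2}+xZ_{a0}^{s^2}+x^{s+1}Z_{a1}^{s^2}\bigr)N_1^{sr}.
\]
Separate both sides according to the residue of the exponents modulo $s$. Take $s$-th roots of the residue-$0$ part, and of the residue-$1$ part after dividing by $x$. This yields
\[
(Y_{a0}^s+xY_{a1}^s)N_1^r=A_{a0}x^{s^{\ell-1}}+B_{a0},\qquad
(Z_{a0}^s+xZ_{a1}^s)N_1^r=A_{a1}x^{s^{\ell-1}}+B_{a1}.
\]
The new family is $\mathcal F'=\{X_0,X_1: X\in\mathcal F\}$, paired as $(A_{a0},B_{a0})$ and $(A_{a1},B_{a1})$ for $1\le a\le t$. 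The coefficients $Y_{a0},Y_{a1},Z_{a0},Z_{a1}$ on the left lie in $\mathcal F'$ because $Y_a,Z_a\in\mathcal F$. So condition (i) of Definition~\ref{def:type2} holds with depth $\ell-1$.

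\emph{Step 3: degree condition.} Each element of $\mathcal F'$ has degree at most $(d+\deg S)/s$, and $\deg N_1=(n-v\deg S)/s$. Using $rv=s-1$,
\[
d'+rn_1\le \frac{d+rn-(s-2)\deg S}{s}<s^{\ell-1}.
\]

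\emph{Step 4: covering condition (the main obstacle).} Let $\lambda$ be a root of $N_1$; it is also a root of $N$, so some $P_a$ has $P_a(\lambda)\ne0$. We must show that $A_{a0}(\lambda)$ and $A_{a1}(\lambda)$ are not both zero.

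If $S(\lambda)\neq0$, then $(SP_a)(\lambda)\ne0$, and this forces one of $A_{a0}(\lambda)$, $A_{a1}(\lambda)$ to be nonzero.

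The delicate case is $S(\lambda)=0$. Since $S$ is squarefree and $P_a(\lambda)\neq 0$, the polynomial $SP_a$ vanishes at $\lambda$ to order exactly $1$. On the other hand, if $A_{a0}(\lambda)=A_{a1}(\lambda)=0$, then $A_{a0}^s+xA_{a1}^s$ vanishes at $\lambda$ to order at least $s\ge3$. This is a contradiction, so condition (ii) holds as well.

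Steps 1--4 together show that $\mathcal F'$ is a covering system for $(N_1,r)$ of depth $\ell-1$.
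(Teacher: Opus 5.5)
Your proof is correct and follows the paper's argument essentially step for step. You multiply each $E_a$ by $S$, use the disjoint degree ranges to get $SW=W_0^s+xW_1^s$ for every $W\in\mathcal F$, split by residues modulo $s$ and take $s$-th roots, and then verify the covering and degree conditions exactly as the paper does; your case split into $S(\lambda)\neq 0$ and $S(\lambda)=0$ in Step 4 is just an unfolded form of the paper's single multiplicity argument.
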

\begin{proof}
Let $\mathcal F=\{P_1,Q_1,\ldots,P_t,Q_t\}$ be a covering system for $(N,r)$ of depth $\ell$. We construct a covering system for $(N_1,r)$ of depth $\ell-1$.

For each index $a$ with $1\le a\le t$, we have
\begin{equation}\label{eq:Eb}
    (Y_a^s+xZ_a^s)N^r=P_ax^{s^\ell}+Q_a,
\end{equation}
where $Y_a,Z_a\in\mathcal F$. Let $n=\deg N$, $\delta=\deg S$, and let $d$ be the largest degree among the polynomials in $\mathcal F$. Then $d+rn<s^\ell$.

Multiplying Eq.~\eqref{eq:Eb} by $S$ and using $N=S^vN_1^s$ and $vr+1=s$, we obtain
\begin{equation}\label{eq:ps}
    \big((SY_a)^s+x(SZ_a)^s\big)N_1^{rs}
    =SP_ax^{s^\ell}+SQ_a.
\end{equation}
Every monomial on the left has degree congruent to $0$ or $1$ modulo $s$. Since $\deg(SP_a),\deg(SQ_a)\le d+\delta\le d+rn<s^\ell$, the two terms on the right have disjoint degree ranges. Hence every monomial of $SP_a$ and $SQ_a$ has degree congruent to $0$ or $1$ modulo $s$. Thus, for every $W\in\mathcal F$, there are unique polynomials $W_0,W_1\in\F_q[x]$ such that $SW=W_0^s+xW_1^s$.

Substituting these decompositions into Eq.~\eqref{eq:ps}, separating the exponents congruent to $0$ and $1$ modulo $s$, and taking $s$-th roots, we obtain
\[
\begin{aligned}
    (Y_{a,0}^s+xY_{a,1}^s)N_1^r
        &=P_{a,0}x^{s^{\ell-1}}+Q_{a,0},\\
    (Z_{a,0}^s+xZ_{a,1}^s)N_1^r
        &=P_{a,1}x^{s^{\ell-1}}+Q_{a,1}.
\end{aligned}
\]
We claim that $\mathcal G:=\{P_{a,0},Q_{a,0},P_{a,1},Q_{a,1}:1\le a\le t\}$ is a covering system for $(N_1,r)$ of depth $\ell-1$.

\begin{enumerate}[(i)]
\item The displayed equations have the required form. Since $Y_a,Z_a\in\mathcal F$, the polynomials $Y_{a,0},Y_{a,1},Z_{a,0},Z_{a,1}$ belong to $\mathcal G$.

\item Let $\lambda$ be a root of $N_1$. Then $\lambda$ is also a root of $N$, so there is an index $b$ such that $P_b(\lambda)\ne0$. If $P_{b,0}(\lambda)=P_{b,1}(\lambda)=0$, the identity $SP_b=P_{b,0}^s+xP_{b,1}^s$ shows that $SP_b$ vanishes at $\lambda$ with multiplicity at least $s$. This is impossible because $S$ is squarefree and $P_b(\lambda)\ne0$. Hence at least one of $P_{b,0},P_{b,1}$ is nonzero at $\lambda$.

\item Let $d_1$ be the largest degree among the polynomials in $\mathcal G$ and let $n_1=\deg N_1$. The terms in $SW=W_0^s+xW_1^s$ have distinct degree residues modulo $s$, so $sd_1\le d+\delta$. Also, $n=v\delta+sn_1$. Therefore,
$$
    d_1+rn_1
    \le\frac{d+\delta+r(n-v\delta)}s
    =\frac{d+rn-(s-2)\delta}s
    \le\frac{d+rn}s
    <s^{\ell-1}.
$$
\end{enumerate}

Thus $\mathcal G$ is a covering system for $(N_1,r)$ of depth $\ell-1$.
\end{proof}

Combining the previous results, we obtain the following theorem.

 \begin{theorem}\label{thm:covering}
Let $N\in \F_q[x]$ be nonzero and suppose that $(N, r)$ admits a covering system of depth $\ell\ge 1$. Then there exist $H\in \F_q[x]$ and $c\in \F_q$ such that
 $$N=cH^{v}.$$
 \end{theorem}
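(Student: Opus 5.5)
We argue by induction on the depth $\ell$, combining Lemma~\ref{lem:0-1} with Proposition~\ref{prop:recursive}. If $N$ is constant, take $H=1$ and $c=N$; so assume $N$ is nonconstant. Lemma~\ref{lem:0-1} gives a factorization $N=S^vN_1^s$ with $S$ monic and squarefree. Since $s=vr+1$, we have $s\equiv 1\pmod v$, so $N_1^s=N_1\cdot (N_1^{r})^v$. It therefore suffices to show that $N_1=c_1H_1^v$ for some constant $c_1$ and polynomial $H_1$. Then
\[
N=c_1^s\,(S\,H_1^{s}N_1^{r}/c_1^{r})^v
\]
up to adjusting the constant. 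More simply, $N_1^s=c_1^sH_1^{sv}$, so $N=c_1^s(SH_1^s)^v$.

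In the inductive step $\ell>1$, Proposition~\ref{prop:recursive} gives $(N_1,r)$ a covering system of depth $\ell-1$. The induction hypothesis then yields $N_1=c_1H_1^v$, and we conclude as above with $H=SH_1^s$ and $c=c_1^s$.

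The base case $\ell=1$ is the main point. Here we want $N_1$ to be constant, using the degree condition $d+rn<s$. We have $n=v\deg S+s\deg N_1$. If $N_1$ were nonconstant, then $rn\ge rs\deg N_1\ge rs>s$, which contradicts $d+rn<s$. Hence $N_1$ is a nonzero constant $c_1$, and $N=c_1^sS^v$.

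The only step that needs care is checking that these degree bounds suffice; no further descent is required. One could equally run the base case through Proposition~\ref{prop:recursive} formally, but the direct degree argument already closes it. Nonzeroness of $N_1$ follows from $N\neq0$.
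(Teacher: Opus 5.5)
Your proof is correct and is the paper's argument: Lemma~\ref{lem:0-1} gives $N=S^vN_1^s$. For $\ell=1$ the degree condition $d+rn<s$ forces $N_1$ to be constant, and for $\ell>1$ Proposition~\ref{prop:recursive} and induction give $N_1=c_1H_1^v$, hence $N=c_1^s(SH_1^s)^v$.

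Your first displayed formula for $N$ is wrong: it carries $H_1^{sv}$, whereas the correct expression is $N=c_1(SH_1N_1^r)^v$, which contains only $H_1^v$ before substituting for $N_1^r$. The ``more simply'' identity you give next is correct and is all that is needed.
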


\begin{proof}
We argue by induction on $\ell$. The result is immediate if $N$ is constant, so assume otherwise. By Lemma~\ref{lem:0-1}, write $N=S^vN_1^s$, with $S$ monic and squarefree.

If $\ell=1$, the degree condition gives $\deg N<s$, so $\deg N_1\le\deg N/s<1$. Thus $N_1$ is constant, and the result follows.

If $\ell>1$, Proposition~\ref{prop:recursive} gives a covering system for $(N_1,r)$ of depth $\ell-1$. By induction, $N_1=cH^v$ for some $c\in\F_q^\times$ and $H\in\F_q[x]$. Hence $N=c^s(SH^s)^v$, as required.
\end{proof}

\subsection{On general covering systems}
In this subsection, we seek to extend the concept of covering systems to a new setting, which will be particularly relevant when treating case (i) of Theorem~\ref{thm:conjecture}.

Hereafter, we fix the following
hypotheses and notation:
\begin{itemize}
    \item $s=p^e>2$ is a power of a prime $p$;
    \item for each integer $i\ge1$, let $s_i=\frac{s^i-1}{s-1}$ and 
$$\mathcal D_i:=\left\{\sum_{j=0}^{i-1}\varepsilon_js^j:\varepsilon_j\in \{0, 1\}\right\};$$
\item for each integer $i\ge1$ and each nonempty family $\mathcal F$ of polynomials over $\F_q$, let
$$
    \mathcal F^{(i)}
    =\langle\{x^gf^{s^i}:g\in\mathcal D_i,\ f\in\mathcal F\}\rangle_{\F_q}
$$
be the $\F_q$-vector space generated by these polynomials.
\end{itemize}

We introduce the following definition.

\begin{definition}\label{def:type2*}
Let $\ell$ be a positive integer and $N\in\F_q[x]$ be nonzero. A {\em general covering system} for $N$ of {\em depth} $\ell$ is a family $\mathcal F=\{P_1,Q_1,\ldots,P_t,Q_t\}$ of polynomials over $\F_q$ satisfying the following conditions:
\begin{enumerate}[(i)]
    \item For each index $a$ with $1\le a\le t$, there is an identity
$$
    E_a:\; P_ax^{s^\ell}+Q_a+\sum_{i\ge1}Z_{a,i}N^{s_i}=0,
$$
where the sum is finite and $Z_{a,i}\in\mathcal F^{(i)}$.
\item For every root $\lambda$ of $N$, there is an index $a$ with $1\le a\le t$ such that $P_a(\lambda)\ne0$;
\item Let $d$ be the maximum of the degrees of the polynomials in $\mathcal F$ and $n=\deg N$. Then
$$
    d+(s-1)n<s^\ell.
$$
\end{enumerate}
\end{definition}

\begin{remark}\label{rem:03}
For every $g\in\mathcal D_i$, write $g=\varepsilon+sg'$, where $\varepsilon\in\{0,1\}$ and $g'\ge0$ is an integer. Then $g',g'+s^{i-1}\in\mathcal D_i$. Moreover, for every $V\in\F_q(x)$, we have $x^gV^s=A^sx+B^s$ for some $A,B\in\F_q(x)$. By Remark~\ref{rem:01}(ii),
$$
    \HD{k}(x^gV^s)=0,\qquad 2\le k<s.
$$
\end{remark}

We will prove that every polynomial admitting a general covering system has the form $cH^{s-1}$. We begin with an analogue of Lemma~\ref{lem:0-1}.

\begin{lemma}\label{lem:0-1*}
   Let $N\in\F_q[x]$ be a nonconstant polynomial that admits a general covering system of depth $\ell\ge1$. Then every root $\lambda$ of $N$ has multiplicity congruent to $0$ or $-1$ modulo $s$. Equivalently, $N=S^{s-1}N_1^{s}$ for some $S, N_1\in \F_q[x]$ with $S$ monic and squarefree.   
\end{lemma}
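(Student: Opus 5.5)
The plan is to imitate the proof of Lemma~\ref{lem:0-1}, with $r$ replaced by $s-1$ (so $v=1$), and to use Remark~\ref{rem:03} in place of Remark~\ref{rem:01}(ii).

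\textbf{Step 1: a Hasse-derivative identity.} Fix an identity $E_a$. I would divide it by $N^{s-1}=N^{s_1\cdot(s-1)}$, which is the natural normalization. Note that $s_i(s-1)=s^i-1$, so
\[
N^{s_i}/N^{s-1}=N^{s_i-s+1}.
\]
More conveniently, I would divide $E_a$ by $N^{s-1}$ only after rewriting each term. Since $s_i=1+s\,s_{i-1}$, we have $N^{s_i}=N\cdot (N^{s_{i-1}})^s$. Each generator $x^gf^{s^i}$ of $\mathcal F^{(i)}$ can be written as $x^g V^s$, with $g\in\mathcal D_i$ and $V=f^{s^{i-1}}$. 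Hence $Z_{a,i}N^{s_i}=N\cdot W$, where $W$ is a sum of terms of the form $x^g\,(\text{rational})^s$.

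Dividing $E_a$ by $N$ then gives
\[
\frac{P_a}{N}x^{s^\ell}+\frac{Q_a}{N}+\sum_i(\text{terms }x^gV^s)=0.
\]
By Remark~\ref{rem:03}, each term $x^gV^s$ is killed by $\HD{k}$ for $2\le k<s$. Since $\HD{k}(x^{s^\ell})=0$ for $1\le k<s$, we obtain
\[
\HD{k}(P_a/N)\,x^{s^\ell}+\HD{k}(Q_a/N)=0,\qquad 2\le k<s.
\]

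\textbf{Step 2: clearing denominators and separating degrees.} I would clear denominators with Lemma~\ref{lem:hasse}, taking $r=r_0=1$. This gives polynomials $H_k(P_a),H_k(Q_a)$ of degree at most $d+k(\rho-1)$. For $k\le s-1$ we have $\rho\le n$, so
\[
d+k(\rho-1)<d+(s-1)n<s^\ell .
\]
This is exactly condition (iii) of Definition~\ref{def:type2*}. Consequently the two summands $H_k(P_a)x^{s^\ell}$ and $H_k(Q_a)$ occupy disjoint degree ranges, and both must vanish.

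\textbf{Step 3: the multiplicity condition.} Let $\lambda$ be a root of $N$ of multiplicity $h$. By condition (ii), I can pick an index $b$ with $P_b(\lambda)\ne0$. Then $P_b/N$ has a pole of order $h$ at $\lambda$, and $\HD{k}(P_b/N)=0$ for $2\le k<s$. Remark~\ref{rem:vanish} gives
\[
\binom{-h}{k}\equiv0\pmod p,\qquad 2\le k<s.
\]
Lemma~\ref{lem:binomial}(ii) then yields $-h\equiv0$ or $1\pmod s$, that is, $h\equiv0$ or $-1\pmod s$. Taking $S$ to be the product of the irreducible factors whose multiplicity is $\equiv-1$ gives $N=S^{s-1}N_1^s$.

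\textbf{Main obstacle.} The delicate step is Step 1: checking that after dividing by $N$, every term $Z_{a,i}N^{s_i}/N$ really has the form $x^gV^s$ with $V$ rational. The decomposition $N^{s_i-1}=(N^{s_{i-1}})^s$ handles this. In the degenerate case $i=1$, we have $s_0=0$, so the term is just $Z_{a,1}$, which is still of the required form.

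Unlike Lemma~\ref{lem:0-1}(a), no preliminary lower bound on multiplicities is needed. The exponent on $N$ here is $1$ rather than $r$, so the crude bound $\rho\le n$ already suffices in Step 2.
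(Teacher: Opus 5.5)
Your proof is correct and is essentially the paper's argument: divide $E_a$ by $N$, note that $Z_{a,i}N^{s_i-1}=Z_{a,i}(N^{s_{i-1}})^s$ is annihilated by $\HD{k}$ for $2\le k<s$ (Remark~\ref{rem:03}), clear denominators with Lemma~\ref{lem:hasse} at $r=r_0=1$, separate degrees using $d+(s-1)n<s^\ell$, and finish with Remark~\ref{rem:vanish} and Lemma~\ref{lem:binomial}(ii) applied to $P_b/N$ with $P_b(\lambda)\ne0$. The only blemish is the abandoned opening about dividing by $N^{s-1}$, which should be deleted: it would not work, since for instance $Z_{a,1}N^{2-s}$ need not be annihilated by $\HD{k}$.
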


\begin{proof}
Let $\mathcal F=\{P_1, Q_1, \ldots, P_t, Q_t\}$ be a general covering system for $N$ of depth ${\ell}\ge 1$.
From hypothesis, each index $a$ with $1\le a\le t$ gives rise to an equation
\begin{equation}\label{eq:Ea*}
    E_a: \; P_ax^{s^{\ell}}+Q_a+\sum_{i\ge 1}Z_{a, i}N^{s_i}=0,
    \end{equation}
where the sum is finite and $Z_{a, i}\in \mathcal F^{(i)}$. Moreover, if $n=\deg N$ and $d$ is the maximum of the degrees of the polynomials in $\mathcal F$, we have $d+(s-1)n<s^{\ell}$.

Each element of $\mathcal F^{(i)}$ is an $\F_q$-linear combination of polynomials of the form $x^gh^s$ with $g\in\mathcal D_i$ and $h\in\F_q[x]$. By Remark~\ref{rem:03}, every $Z\in\mathcal F^{(i)}$ satisfies $\HD{k}(Z)=0$ for $2\le k<s$.
Dividing Eq.~\eqref{eq:Ea*} by $N$ and taking the $k$-th Hasse derivative for $2\le k<s$, we obtain
\begin{equation}\label{eq:comb*}
    \HD{k}(P_a/N)x^{s^\ell}+\HD{k}(Q_a/N)=0.
\end{equation}
Indeed, since $s_i-1$ is divisible by $s$, Remark~\ref{rem:01}(i) and the product rule \eqref{eq:hasse-product} give $\HD{k}(Z_{a,i}N^{s_i-1})=N^{s_i-1}\HD{k}(Z_{a,i})=0$.

Set $R=\mathrm{rad}(N)$ and $\rho=\deg R$. 
For $Z=P_a,Q_a$ and $2\le k<s$, Lemma~\ref{lem:hasse} shows that $H_k(Z)=NR^k\HD{k}(Z/N)$ is a polynomial satisfying
\begin{equation}\label{eq:H_kine*}
    \deg H_k(Z)\le d+k(\rho-1)<d+(s-1)\rho\le d+(s-1)n<s^\ell.
\end{equation} 

Multiplying Eq.~\eqref{eq:comb*} by $NR^k$ gives
\begin{equation}\label{eq:comb2*}
    H_k(P_a)x^{s^\ell}+H_k(Q_a)=0,\qquad 2\le k<s.
\end{equation}
By Eq.~\eqref{eq:H_kine*}, the two terms have disjoint degree ranges. Hence $H_k(P_a)=H_k(Q_a)=0$, so $\HD{k}(P_a/N)=\HD{k}(Q_a/N)=0$ for $2\le k<s$.

Let $\lambda$ be a root of $N$ with multiplicity $j\ge1$. By condition (ii) in Definition~\ref{def:type2*}, there is an index $a$ with $1\le a\le t$ such that $P_a(\lambda)\ne0$. In particular, $P_a/N$ has a pole of order $j$ at $\lambda$.
Since $\HD{k}(P_a/N)=0$ for every integer $k$ with $2\le k<s$, Remark~\ref{rem:vanish} implies that $\binom{-j}{k}\equiv0\pmod p$ for every such $k$. Together with Lemma~\ref{lem:binomial}(ii), this gives $-j\equiv0,1\pmod s$, and the result follows.
\end{proof}

We obtain the following theorem.

\begin{theorem}\label{thm:covering*}
    Suppose $N\in \F_q[x]$ admits a general covering system of depth $\ell\ge 1$. Then $N=cH^{s-1}$ for some $c\in \F_q$ and $H\in \F_q[x]$.  
\end{theorem}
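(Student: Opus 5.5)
The plan mirrors the proof of Theorem~\ref{thm:covering}. I will first prove an analogue of Proposition~\ref{prop:recursive}: if $N=S^{s-1}N_1^s$ with $S$ monic squarefree, $\ell>1$, and $N$ admits a general covering system of depth $\ell$, then $N_1$ admits one of depth $\ell-1$. The theorem then follows by induction on $\ell$ as in Theorem~\ref{thm:covering}.

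\emph{Base case and induction.} If $N$ is constant there is nothing to prove, so assume otherwise. Lemma~\ref{lem:0-1*} gives $N=S^{s-1}N_1^s$.
\begin{itemize}
\item For $\ell=1$, condition (iii) gives $(s-1)n<s$, so $n\le 1$ and $\deg N_1\le n/s<1$. Hence $N_1$ is constant and $N=cS^{s-1}$.
\item For $\ell>1$, the recursive step and induction give $N_1=cH^{s-1}$, so $N=c^s(SH^s)^{s-1}$.
\end{itemize}

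\emph{Recursive step, multiplying by $S$.} Multiply $E_a$ by $S$. Since $(s-1)s_i=s^i-1$, we have $SN^{s_i}=S^{s^i}N_1^{ss_i}$. Write each $Z_{a,i}$ as a combination of terms $x^gf^{s^i}$ with $f\in\mathcal F$, $g\in\mathcal D_i$, and $g=\varepsilon+sg'$ as in Remark~\ref{rem:03}. Then
\[
x^gf^{s^i}S^{s^i}N_1^{ss_i}=x^{\varepsilon}\bigl(x^{g'}(fS)^{s^{i-1}}N_1^{s_i}\bigr)^s,
\]
so every monomial of the sum has degree congruent to $0$ or $1$ modulo $s$. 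Condition (iii) gives $\deg(SP_a),\deg(SQ_a)\le d+\delta<s^\ell$, so $SP_ax^{s^\ell}$ and $SQ_a$ have disjoint degree ranges. Hence every monomial of $SP_a$ and $SQ_a$ has degree congruent to $0$ or $1$ modulo $s$. This gives unique decompositions $SW=W_0^s+xW_1^s$ for every $W\in\mathcal F$.

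\emph{Recursive step, separating residues.} Substitute $fS=f_0^s+xf_1^s$ inside the sum. Then
\[
(fS)^{s^{i-1}}=f_0^{s^i}+x^{s^{i-1}}f_1^{s^i},
\]
and by Remark~\ref{rem:03} both $g'$ and $g'+s^{i-1}$ lie in $\mathcal D_i$. Therefore $x^{g'}(fS)^{s^{i-1}}\in\mathcal G^{(i)}$, where $\mathcal G=\{P_{a,0},Q_{a,0},P_{a,1},Q_{a,1}\}$ as in Proposition~\ref{prop:recursive}. Separate the exponent classes $0$ and $1$ modulo $s$ and take $s$-th roots. This yields, for $\varepsilon\in\{0,1\}$, identities
\[
P_{a,\varepsilon}x^{s^{\ell-1}}+Q_{a,\varepsilon}+\sum_i Z'_{a,i,\varepsilon}N_1^{s_i}=0,\qquad Z'_{a,i,\varepsilon}\in\mathcal G^{(i)}.
\]

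\emph{Checking conditions (ii) and (iii) for $\mathcal G$.} The covering condition follows exactly as in Proposition~\ref{prop:recursive}: if $P_{b,0}(\lambda)=P_{b,1}(\lambda)=0$, then $SP_b$ would vanish to order at least $s$ at $\lambda$, which is impossible since $S$ is squarefree and $P_b(\lambda)\ne0$. For the degree condition, $sd_1\le d+\delta$ and $n=(s-1)\delta+sn_1$, so
\[
d_1+(s-1)n_1\le \frac{d+\delta+(s-1)n-(s-1)^2\delta}{s}\le\frac{d+(s-1)n}{s}<s^{\ell-1}.
\]

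The main obstacle is the bookkeeping in the separation step: one must check that each term $Z_{a,i}N^{s_i}$ becomes, after multiplying by $S$ and taking $s$-th roots, again an element of $\mathcal G^{(i)}$ times $N_1^{s_i}$ with the same index $i$. This is exactly where the identity $(s-1)s_i=s^i-1$ and the closure property of $\mathcal D_i$ in Remark~\ref{rem:03} are needed.
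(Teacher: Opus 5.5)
Your proposal is correct and follows essentially the same route as the paper: Lemma~\ref{lem:0-1*} gives $N=S^{s-1}N_1^s$. Multiplying each $E_a$ by $S$ and using $SN^{s_i}=S^{s^i}N_1^{ss_i}$ forces the decompositions $SW=W_0^s+xW_1^s$. Separating exponents in the residue classes $0$ and $1$ modulo $s$ then yields a general covering system of depth $\ell-1$ for $N_1$, with conditions (ii) and (iii) checked exactly as you do. The only difference is organizational: the paper carries out this descent inside the inductive proof of the theorem, rather than isolating it as a separate proposition.
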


\begin{proof}
We argue by induction on $\ell$. The result is trivial if $N$ is constant. Assuming otherwise, Lemma~\ref{lem:0-1*} gives $N=S^{s-1}M^s$ for some $S,M\in\F_q[x]$ with $S$ squarefree. If $\ell=1$, condition (iii) in Definition~\ref{def:type2*} gives $\deg N<s$, so $M$ is constant and the result follows. For $\ell>1$, it suffices to prove that $M$ admits a general covering system of depth $\ell-1$.

Let $\mathcal F=\{P_1,Q_1,\ldots,P_t,Q_t\}$ be a general covering system for $N$ of depth $\ell\ge1$.
By hypothesis, each index $a$ with $1\le a\le t$ gives rise to an equation
\begin{equation}\label{eq:Eb*}
    E_a:\; P_ax^{s^\ell}+Q_a+\sum_{i\ge1}Z_{a,i}N^{s_i}=0,
\end{equation}
where the sum is finite and $Z_{a,i}\in\mathcal F^{(i)}$. Moreover, if $n=\deg N$ and $d$ is the maximum of the degrees of the polynomials in $\mathcal F$, we have $d+(s-1)n<s^\ell$. Multiplying both sides of Eq.~\eqref{eq:Eb*} by $S$, we obtain
$$
    P_aSx^{s^\ell}+Q_aS+\sum_{i\ge1}Z_{a,i}SN^{s_i}=0.
$$
Since $(s-1)s_i+1=s^i$ and $N=S^{s-1}M^s$, we obtain $SN^{s_i}=S^{s^i}M^{s_is}$, hence 
\begin{equation}\label{eq:ps*}
    P_aSx^{s^{\ell}}+Q_aS+\sum_{i\ge 1}Z_{a, i}S^{s^i}M^{s_is}=0.
\end{equation}

Since every $g\in\mathcal D_i$ satisfies $g\equiv0,1\pmod s$, it follows by the definition of $\mathcal F^{(i)}$ that each monomial in $Z_{a,i}S^{s^i}M^{s_is}$ has degree congruent to $0$ or $1$ modulo $s$. By Eq.~\eqref{eq:ps*}, the same holds for each monomial in $P_aSx^{s^\ell}+Q_aS$.
Since $\deg S\le n$ and $s>2$, we obtain
$$
    \max\{\deg(P_aS),\deg(Q_aS)\}
    \le d+\deg S\le d+(s-1)n<s^\ell.
$$
In particular, there is no monomial cancellation in the sum $P_aSx^{s^\ell}+Q_aS$.
Thus each monomial in $P_aS$ and $Q_aS$ has degree congruent to $0$ or $1$ modulo $s$. Hence there exist polynomials $P_{a,0},P_{a,1},Q_{a,0},Q_{a,1}\in\F_q[x]$ such that
$$
P_aS=P_{a,1}^sx+P_{a,0}^s
\quad\text{and}\quad
Q_aS=Q_{a,1}^sx+Q_{a,0}^s.
$$

These expressions hold for every polynomial in $\mathcal F$. By hypothesis, each $Z_{a,i}$ in Eq.~\eqref{eq:ps*} belongs to $\mathcal F^{(i)}$. Thus
$$
Z_{a,i}=\sum_{g\in\mathcal D_i}\sum_{W\in\mathcal F}
c_{a,g,i,W}x^gW^{s^i},
$$
where $c_{a,g,i,W}\in\F_q$, and hence
$$
Z_{a,i}S^{s^i}=\sum_{g\in\mathcal D_i}\sum_{W\in\mathcal F}
c_{a,g,i,W}x^g(WS)^{s^i}.
$$
Since $W\in\mathcal F$, we have $WS=W_1^sx+W_0^s$ for some $W_0,W_1\in\F_q[x]$. Thus
$$
Z_{a,i}S^{s^i}
=\sum_{g\in\mathcal D_i}\sum_{W\in\mathcal F}
c_{a,g,i,W}\left(
x^{g+s^i}W_1^{s^{i+1}}+x^gW_0^{s^{i+1}}
\right).
$$
Writing $g=\alpha_g+sg'$ with $\alpha_g\in\{0,1\}$, we can rewrite this as
$$
Z_{a,i}S^{s^i}
=\left(Z_{a,i}^{(1)}\right)^sx
+\left(Z_{a,i}^{(0)}\right)^s,
$$
where, for each $\varepsilon\in\{0,1\}$,
$$
Z_{a,i}^{(\varepsilon)}
:=\sum_{g\in\mathcal D_i\atop \alpha_g=\varepsilon}
\sum_{W\in\mathcal F}
c_{a,g,i,W}^{1/s}
\left(x^{g'+s^{i-1}}W_1^{s^i}+x^{g'}W_0^{s^i}\right).
$$
Therefore, Eq.~\eqref{eq:ps*} becomes
$$
\left(P_{a,1}x^{s^{\ell-1}}+Q_{a,1}
+\sum_{i\ge1}Z_{a,i}^{(1)}M^{s_i}\right)^sx
+\left(P_{a,0}x^{s^{\ell-1}}+Q_{a,0}
+\sum_{i\ge1}Z_{a,i}^{(0)}M^{s_i}\right)^s=0.
$$
The two terms have exponents congruent to $1$ and $0$ modulo $s$, respectively, so
$$
E_{a,1}:\; P_{a,1}x^{s^{\ell-1}}+Q_{a,1}
+\sum_{i\ge1}Z_{a,i}^{(1)}M^{s_i}=0,
$$
$$
E_{a,0}:\; P_{a,0}x^{s^{\ell-1}}+Q_{a,0}
+\sum_{i\ge1}Z_{a,i}^{(0)}M^{s_i}=0.
$$
We claim that
$$
\mathcal G:=\{P_{a,0},P_{a,1},Q_{a,0},Q_{a,1}:1\le a\le t\}
$$
is a general covering system for $M$ of depth $\ell-1$. We verify the three defining conditions.

\begin{enumerate}[(i)]
    \item We use the equations $E_{a,1}$ and $E_{a,0}$ for each index $a$ with $1\le a\le t$. Since each $W\in\mathcal F$ is one of the polynomials $P_a,Q_a$, both $W_0$ and $W_1$ belong to $\mathcal G$. By Remark~\ref{rem:03}, we have $g',g'+s^{i-1}\in\mathcal D_i$ whenever $g\in\mathcal D_i$ and $g=\alpha_g+sg'$ with $\alpha_g\in\{0,1\}$. Thus $Z_{a,i}^{(0)},Z_{a,i}^{(1)}\in\mathcal G^{(i)}$ for each index $a$ with $1\le a\le t$ and each integer $i\ge1$, proving the first condition.

    \item Let $\lambda$ be a root of $M$. Since $\lambda$ is also a root of $N$, condition (ii) in Definition~\ref{def:type2*} gives an index $b$ with $1\le b\le t$ such that $P_b(\lambda)\ne0$. If $P_{b,1}(\lambda)=P_{b,0}(\lambda)=0$, the identity
$$
P_bS=P_{b,1}^sx+P_{b,0}^s
$$
shows that $P_bS$ vanishes at $\lambda$ with multiplicity at least $s$. This contradicts $P_b(\lambda)\ne0$, since $S$ is squarefree. Hence $P_{b,1}(\lambda)\ne0$ or $P_{b,0}(\lambda)\ne0$, proving the second condition.

    \item Finally, let $d_1$ be the maximum of the degrees of the polynomials in $\mathcal G$, and set $m=\deg M$ and $\delta=\deg S$. Since the two terms in $P_{a,1}^sx+P_{a,0}^s$ have exponents in distinct residue classes modulo $s$, the identity $SP_a=P_{a,1}^sx+P_{a,0}^s$ gives $s\deg P_{a,j}\le\delta+d$ for each $j\in\{0,1\}$. The same argument applies to $Q_{a,0},Q_{a,1}$; hence $d_1\le(\delta+d)/s$. Since $N=S^{s-1}M^s$, we have $m=(n-(s-1)\delta)/s$. Thus
$$
d_1+(s-1)m
\le\frac{\delta+d+(s-1)(n-(s-1)\delta)}s
\le\frac{d+(s-1)n}s
<s^{\ell-1}.
$$
This proves the last condition.
\end{enumerate}
By induction, $M=cH^{s-1}$ for some $c\in\F_q$ and $H\in\F_q[x]$. Hence $N=S^{s-1}M^s=c^s(SH^s)^{s-1}$, completing the proof.
\end{proof}

\section{Proof of the main results}\label{sec:proof}
We split the proof of Theorem~\ref{thm:conjecture} into two cases according to whether $\mathcal U$ is a field.

\subsection{The case $\mathcal U$ is a field}
Let $F\in\mathcal P(\mathcal U^v,q)$. Then $\mathcal U^v=\mathcal C_r\cup\{0\}$, where $\mathcal C_r\subseteq\F_q$ is the cyclic group of order $r=(p^{mk}-1)/v$.

By Remark~\ref{rem:field}, we may assume that $\mathcal U^v$ is not a field, so $v>1$. Let $\gamma_0\in\F_q$ be the element associated to $F$ in Theorem~\ref{thm:mills}. By Lemma~\ref{lem:gamma}, we have $\gamma_0=0$. Therefore, Theorem~\ref{thm:mills} gives integers $m_0,k_0,v_0$ and polynomials $A,B,L,N\in\F_q[x]$ satisfying the following:
\begin{itemize}
    \item $v_0$ divides $p^{k_0}-1$ and $\frac{p^{m_0k_0}-1}{v_0}=r$;
    \item $L=\gcd(F, x^q-x)=A^{p^{m_0k_0}}x+B^p$;
    \item $F=L^{v_0}N^{p^{m_0k_0}}$.
\end{itemize}
Let $e$ be the smallest positive integer such that $\mathcal U^v=\mathcal C_r\cup\{0\}\subseteq\F_{p^e}$, that is, $e=\mathrm{ord}_rp$. Since $r\mid p^{m_0k_0}-1$, we have $m_0k_0=e\ell$ for some integer $\ell\ge1$.
In particular, for $s=p^e$ and $t=(s-1)/r$, we have
$$
r=\frac{s-1}{t}=\frac{p^{m_0k_0}-1}{v_0},
\qquad\text{and}\qquad
p^{m_0k_0}=s^\ell.
$$
Therefore, $t\equiv v_0\pmod s$ and $v_0\ge t$, so we may write
$$
F=L^tN_1^s,
\qquad N_1=L^{(v_0-t)/s}N^{s^{\ell-1}}.
$$
Since $V_F=\mathcal C_r\cup\{0\}$, the polynomial $T$ in Theorem~\ref{MillsBor} is $T(x)=x^{r+1}-x$, and $T'(a)=r$ for each $a\in V_F\setminus\{0\}$. Thus $\theta=-r$. Since $F=L^tN_1^s$ and $tr\equiv-1\pmod p$, Eq.~\eqref{MVSP} gives
\begin{equation}\label{eq:F-differential}
(L^tN_1^s)^{r+1}-L^tN_1^s
=(x^q-x)L^{t-1}L'N_1^s.
\end{equation}
Set $A_1=A^{s^{\ell-1}}$, so $L'=A^{p^{m_0k_0}}=A_1^s$. Since $tr+1=s$, Eq.~\eqref{eq:F-differential} simplifies to
$$
L^sN_1^{sr}-L=(x^q-x)A_1^s.
$$
Substituting $L=xA_1^s+B^p$, we obtain
\begin{equation}\label{eq:r-1}
(LN_1^r)^s-B^p=x^qA_1^s.
\end{equation}
Since $r\mid q-1$ and $e=\mathrm{ord}_rp$, we may write $q=s^h$ for some integer $h\ge1$. Thus $x^q$ is an $s$-th power, and Eq.~\eqref{eq:r-1} gives $B^p=B_1^s$ for some $B_1\in\F_q[x]$. Taking $s$-th roots, we obtain $LN_1^r=A_1x^{s^{h-1}}+B_1$. Since $L=xA_1^s+B_1^s$, this becomes
\begin{equation}\label{eq:r-2}
(B_1^s+xA_1^s)N_1^r=A_1x^{s^{h-1}}+B_1.
\end{equation}
If $h=1$, the MVSP degree bound gives $\deg F\le(q-1)/r<s$. Since $F=L^tN_1^s$, it follows that $N_1$ is constant.

For $h\ge2$, we use the following lemma. Recall that $s>2$.

\begin{lemma}\label{lem:field-covering}
Suppose $h\ge2$. The family $\mathcal F=\{A_1,B_1\}$ is a covering system for $(N_1,r)$ of depth $h-1$.
\end{lemma}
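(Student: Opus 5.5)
The plan is to verify directly the three conditions of Definition~\ref{def:type2}, taking $t=1$, $P_1=A_1$, $Q_1=B_1$, depth $h-1$, and the auxiliary pair $Y_1=B_1$, $Z_1=A_1$. With these choices, condition (i) is exactly Eq.~\eqref{eq:r-2}, since $x^{s^{h-1}}=x^{s^{\ell'}}$ with $\ell'=h-1\ge1$, and $Y_1,Z_1\in\mathcal F$. So the work is in conditions (ii) and (iii). Both rest on the fact that $L=\gcd(F,x^q-x)=xA_1^s+B_1^s$ divides $x^q-x$. Hence $L$ is squarefree, and $L$ is nonconstant because $0\in V_F$.

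\textbf{Covering condition (ii).} Let $\lambda\in\overline{\F}_q$ be a root of $N_1$, and suppose $A_1(\lambda)=0$. The left side of Eq.~\eqref{eq:r-2} vanishes at $\lambda$, so $A_1(\lambda)\lambda^{s^{h-1}}+B_1(\lambda)=0$, which forces $B_1(\lambda)=0$. Then $L=xA_1^s+B_1^s$ vanishes at $\lambda$ with multiplicity at least $s\ge 3$. This contradicts the squarefreeness of $L$. Hence $A_1(\lambda)\ne0$, which is condition (ii). The same argument shows that $A_1\neq 0$: otherwise $L=B_1^s$ would be a nonconstant $s$-th power, which is again impossible.

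\textbf{Degree condition (iii).} I expect this to be the main step, though it should be a short computation. Set $d=\max\{\deg A_1,\deg B_1\}$ and $n=\deg N_1$. The two summands of $L=xA_1^s+B_1^s$ have monomial degrees in distinct residue classes modulo $s$, so there is no cancellation between them. Therefore $\deg L=\max\{1+s\deg A_1,\ s\deg B_1\}\ge sd$.

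Next I use the MVSP condition: $r+1=\lceil q/\deg F\rceil$ implies $r<q/\deg F$, that is, $r\deg F<q=s^h$. From $F=L^tN_1^s$ and $rt=s-1$ we get
\[
r\deg F=(s-1)\deg L+rsn .
\]
Combining these gives
\[
s(d+rn)\le \deg L+rsn\le (s-1)\deg L+rsn=r\deg F<s^h,
\]
where the middle inequality uses $s>2$ and $\deg L\ge 1$. Dividing by $s$ yields $d+rn<s^{h-1}$, which is condition (iii).

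This completes the verification, so $\{A_1,B_1\}$ is a covering system for $(N_1,r)$ of depth $h-1$. Theorem~\ref{thm:covering} then applies and gives $N_1=cH^{t}$.
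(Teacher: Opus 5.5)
Your proof is correct. Conditions (i) and (ii) are handled exactly as in the paper; the difference is in condition (iii).

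\textbf{The paper's route.} It does not return to the MVSP inequality. Instead it compares degrees on the two sides of Eq.~\eqref{eq:r-2}, splitting into two cases:
\begin{enumerate}[(a)]
\item $\deg B_1>\deg A_1$: the left side has degree $sd+rn$, which must equal $\deg A_1+s^{h-1}<d+s^{h-1}$.
\item $\deg B_1\le\deg A_1$: one gets the exact relation $sd+1+rn=d+s^{h-1}$.
\end{enumerate}

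\textbf{Your route.} You combine three ingredients: $r\deg F<q=s^h$, the factorization $F=L^tN_1^s$ with $rt=s-1$, and the no-cancellation bound $\deg L\ge sd$. This yields the estimate in one uniform chain with no case split. It is the same inequality the paper uses to dispose of $h=1$.

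\textbf{What each buys.} The paper's argument shows that the depth bound is forced by the identity \eqref{eq:r-2} alone, in the same spirit as the degree comparison in Lemma~\ref{lem:nonfield-covering}. Yours draws on extra information, namely that $F$ is an MVSP, which is freely available here.

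Two minor remarks. Your middle inequality only needs $s\ge2$ and $\deg L\ge0$. Also, your check that $A_1\ne0$ is not needed for $\deg L\ge sd$, though it does no harm.
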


\begin{proof}
 
Condition (i) in Definition~\ref{def:type2} follows from Eq.~\eqref{eq:r-2}. To verify condition (ii), let $\lambda$ be a root of $N_1$. If $A_1(\lambda)=0$, Eq.~\eqref{eq:r-2} gives $B_1(\lambda)=0$. Then $L=xA_1^s+B_1^s$ vanishes at $\lambda$ with multiplicity at least $s$, contradicting the fact that $L$ is squarefree. Hence $A_1(\lambda)\ne0$. It remains to verify condition (iii) in Definition~\ref{def:type2}. Let $d=\max\{\deg A_1,\deg B_1\}$ and $n=\deg N_1$. We must show that $d+rn<s^{h-1}$.
If $\deg B_1>\deg A_1$, the left-hand side of Eq.~\eqref{eq:r-2} has degree $sd+rn>\deg B_1$. Comparing degrees in Eq.~\eqref{eq:r-2}, we obtain
$$
sd+rn=\deg A_1+s^{h-1}<d+s^{h-1}.
$$
Since $s>2$, it follows that $d+rn<s^{h-1}$.
If $\deg B_1\le\deg A_1$, the left-hand side of Eq.~\eqref{eq:r-2} has degree $sd+1+rn>\deg B_1$. Comparing degrees in Eq.~\eqref{eq:r-2}, we obtain
$$
sd+1+rn=d+s^{h-1},
$$
so $d+rn<s^{h-1}$. This completes the proof.
\end{proof}

For $h\ge2$, Lemma~\ref{lem:field-covering} and Theorem~\ref{thm:covering} give $N_1=cH^t$ for some $c\in\F_q$ and $H\in\F_q[x]$. The same conclusion holds for $h=1$, since $N_1$ is constant. Thus, setting $\alpha=c^s$, we obtain
$$
F=\alpha L^tH^{ts}=\alpha(LH^s)^t.
$$
Recall that $F\in\mathcal P(\mathcal U^v,q)=\mathcal P(\mathcal C_r\cup\{0\},q)$. Since $rt=s-1\mid q-1$, the cyclic group $\F_q^\times$ satisfies $\mathcal C_r\subseteq(\F_q^\times)^t$. Choose $u\in\F_q$ with $F(u)\ne0$. Then $F(u)\in\mathcal C_r$ and $F(u)=\alpha((LH^s)(u))^t$, so $\alpha=a^t$ for some $a\in\F_q^\times$. Hence $F=H_0^t$, where $H_0=aLH^s$. Since $\mathcal C_r\cup\{0\}=(\F_s)^t$, we have $H_0^t\in\mathcal P((\F_s)^t,q)$. Lemma~\ref{lem:br-v} then gives $H_0\in\mathcal P(\F_s,q)$, completing this case.

\subsection{The case $\mathcal U$ is not a field}

The result is trivial if $v=1$, so we assume that $v>1$. Set $r=(p^{mk}-1)/v$. By Remark~\ref{rem:field}, we may assume that $\mathcal U^v$ is not a field.

Fix $F\in\mathcal P(\mathcal U^v,q)$ and let $\gamma_0\in\F_q$ be the element associated to $F$ in Theorem~\ref{thm:mills}. By Lemma~\ref{lem:gamma}, we have $\gamma_0=0$. Therefore, Theorem~\ref{thm:mills} gives integers $m_0,k_0,v_0$ and polynomials $A,B,L,N\in\F_q[x]$ satisfying the following:
\begin{itemize}
    \item $v_0$ divides $p^{k_0}-1$ and $\frac{p^{m_0k_0}-1}{v_0}=r$;
    \item $L=\gcd(F, x^q-x)=A^{p^{m_0k_0}}x+B^p$;
    \item $F=L^{v_0}N^{p^{m_0k_0}}$.
\end{itemize}
Under these conditions, the proof of Theorem~2.6 in~\cite{BR} shows that $m_0k_0=mk$ and $v_0=v$. Since $\mathcal U$ is not a field and $1\in\mathcal U$, we have $m>1$. Hence $\#\mathcal U^{p^k-1}=(p^{mk}-1)/(p^k-1)+1>2$. By Lemma~\ref{lem:reduction}, it therefore suffices to consider the case $v=p^k-1$.
Set $s=p^k$, so $v=s-1$, and write
$$
L_{\mathcal U}(x):=\prod_{u\in\mathcal U}(x-u)
=\sum_{i=0}^m a_ix^{s^i},
$$
where $a_0,a_m\ne0$.
Let $T\in\F_q[x]$ be the polynomial associated to $F$ in Theorem~\ref{MillsBor}. Then
$$
\frac{T(x^{s-1})}{x^{s-2}}=L_{\mathcal U}(x),
\qquad\text{and hence}\qquad
T(x)=\sum_{i=0}^m a_ix^{s_i+1},
$$
where $s_i=(s^i-1)/(s-1)$.
Theorem~\ref{MillsBor} gives
$$
T(F)=\theta(x^q-x)F'
$$
for some $\theta\in\F_q^\times$. Substituting $F=L^{s-1}N^{s^m}$ and $L'=A^{s^m}$ into Eq.~\eqref{MVSP}, we obtain
$$
\sum_{i=0}^m a_iL^{(s-1)(s_i+1)}N^{s^m(s_i+1)}
=-\theta(x^q-x)L^{s-2}A^{s^m}N^{s^m}.
$$
Set $Q=s^m$. Since $(s-1)s_i+1=s^i$, Eq.~\eqref{MVSP} yields
\begin{equation}\label{eq:mill-U}
a_0L+\sum_{i=1}^m a_iL^{s^i}N^{s_iQ}
=-\theta(x^q-x)A^Q.
\end{equation}
Differentiating Eq.~\eqref{eq:mill-U}, we obtain $a_0A^Q=\theta A^Q$. Since $L$ is squarefree, $L'=A^Q\ne0$, so $\theta=a_0$. Dividing Eq.~\eqref{eq:mill-U} by $a_0$, we obtain
\begin{equation}\label{eq:mill-U-2}
L+\sum_{i=1}^m b_iL^{s^i}N^{s_iQ}
=A^Qx-A^Qx^q,
\qquad b_i=a_i/a_0.
\end{equation}

Write $q/Q=s^\ell$. Since $\mathcal U\subseteq\F_q$ is not a field and $\#\mathcal U=Q$, we have $\mathcal U\ne\F_q$, so $\ell\ge1$. Eq.~\eqref{eq:mill-U-2} yields the following lemma.

\begin{lemma}\label{lem:nonfield-covering}
There is a general covering system for $N$ of depth $\ell$.
\end{lemma}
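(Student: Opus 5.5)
The plan is to read a general covering system for $N$ directly off Eq.~\eqref{eq:mill-U-2}, by splitting it according to exponents modulo $Q=s^m$, in the same way that Lemma~\ref{lem:field-covering} was read off Eq.~\eqref{eq:r-2}.

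\textbf{Step 1: rewrite the equation.} Substituting $L=A^Qx+B^p$ and cancelling the term $A^Qx$ on both sides turns Eq.~\eqref{eq:mill-U-2} into
\[
B^p+\sum_{i=1}^m b_i\bigl(A^{Qs^i}x^{s^i}+B^{ps^i}\bigr)N^{s_iQ}+A^Q\bigl(x^{s^\ell}\bigr)^Q=0 .
\]
Every term except $B^p$ is a combination of monomials $x^g$ times $Q$-th powers, with $g\in\{0\}\cup\{s^i\}\cup s^i\mathcal D$, where $\mathcal D$ is the exponent set carried by $B^p$.

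\textbf{Step 2: digit structure of $B^p$.} Peel off $s$-th powers one level at a time, $m$ times, as in the proof of Theorem~\ref{thm:covering*}. At each level, use a degree bound to separate $B^p$ from the $x^q$-term, so that no monomials cancel between them. Then compare residues modulo $s$ to show that $B^p$ has the shape
\[
B^p=\sum_{g\in\mathcal D_m}x^gC_g^{\,Q}.
\]
The degree bound should come, as in Lemma~\ref{lem:field-covering}, from comparing leading degrees on both sides of the equation.

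\textbf{Step 3: split by residue class and take $Q$-th roots.} Substitute this expansion of $B^p$ into $L^{s^i}$. Collect the monomials according to their exponent class modulo $Q$; the relevant classes are indexed by $g\in\mathcal D_m$. Then take $Q$-th roots. This produces one identity for each class $g$, of the form
\[
P_gx^{s^\ell}+Q_g+\sum_{i\ge1}Z_{g,i}N^{s_i}=0 .
\]
Here $P_0=A$, $P_g=0$ for $g\ne0$, and each $Q_g$ is, up to scalars, a $C_g$. Each $Z_{g,i}$ is a sum of terms $x^{g'}W^{s^i}$ with $W\in\{A\}\cup\{C_g\}$; the map $g\mapsto g'$ plays the role of the digit-shifting in Remark~\ref{rem:03}. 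The family is taken to be $\mathcal F=\{A\}\cup\{C_g\}$, padded with zeros where needed. Verifying that $Z_{g,i}\in\mathcal F^{(i)}$ is a bookkeeping check on $s$-adic digits: multiplying an element of $\mathcal D_m$ by $s^i$ and reducing modulo $Q$ must leave only admissible exponents. This gives condition (i) of Definition~\ref{def:type2*}.

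\textbf{Step 4: covering condition.} Let $\lambda$ be a root of $N$. If $A(\lambda)=0$, the equation in Step~1 gives $B(\lambda)=0$, so $L=A^Qx+B^p$ vanishes at $\lambda$ to order at least $p\ge2$. This contradicts the fact that $L\mid x^q-x$ is squarefree. Hence $P_0(\lambda)=A(\lambda)\ne0$, which is condition (ii).

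\textbf{Step 5: degree condition (main obstacle).} It remains to prove
\[
\max\Bigl(\deg A,\ \max_g\deg C_g\Bigr)+(s-1)\deg N<s^\ell .
\]
First, compare leading degrees in Eq.~\eqref{eq:mill-U-2}, split according to whether $\deg B^p$ exceeds $\deg(A^Qx)$, exactly as in Lemma~\ref{lem:field-covering}. The dominant term on the left is $b_mL^{Q}N^{s_mQ}$, of degree $Q\deg L+s_mQ\deg N$, and it must match $Q\deg A+q$. Dividing by $Q$ should give
\[
\deg L+s_m\deg N\le\deg A+s^\ell .
\]
Since $s_m\ge s>s-1$ when $m>1$, and $\deg C_g\le\deg L/Q$, this should yield the bound, with the slack $s_m-(s-1)$ absorbing the additive constants. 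If it does not suffice, I will fall back on the MVSP degree bound $\deg F<q/r$. Step~5 and the digit bookkeeping in Step~3 are where I expect the real difficulty.
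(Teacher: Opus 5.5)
Your proposal is correct and follows the paper's route: split $L$ (equivalently $B^p$) by exponent classes modulo $Q$, take $Q$-th roots to get a general covering system with $P_0=A$ and all other $P$'s zero, get (ii) from the squarefreeness of $L$, and get (iii) from the leading-degree identity $\deg L+s_m\deg N=\deg A+s^\ell$. Your argument for (ii), $A(\lambda)=0\Rightarrow B(\lambda)=0$ directly from Step~1, is in fact shorter than the paper's detour through $C$ and the $D_j$. In (iii), the constants are absorbed by $\deg L\ge Q\,d$ and $\deg L\ge Q\deg A+1$, which give $\deg L>d+\deg A$, and not by the slack $s_m-(s-1)$, which only multiplies $\deg N$.

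The one real difference is Step~2. The paper gets the sharper decomposition $L\in\F_q[x^Q]\oplus\bigoplus_{j<m}x^{s^j}\F_q[x^Q]$ in one stroke: it observes that $\Psi(Z)=\sum_i b_iN^{Qs_i}Z^{s^i}$ is additive, preserves this space, and satisfies $\Psi^m(\F_q[x])\subseteq\F_q[x^Q]$. So no level-by-level peeling and no degree bound are needed. Indeed, $A^Qx^q=(Ax^{s^\ell})^Q$ lies in the zero class and never interferes with the residue comparison.
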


\begin{proof}
Put $R=\F_q[x^Q]$ and let
$$
V=R\oplus\bigoplus_{j=0}^{m-1}x^{s^j}R.
$$
Thus $V$ consists of the polynomials over $\F_q$ whose monomials have degrees congruent to $0$ or $s^j$ modulo $Q$ for some integer $j$ with $0\le j<m$.
Define $\Psi:\F_q[x]\to\F_q[x]$ by
$$
\Psi(Z)=\sum_{i=1}^m b_iN^{Qs_i}Z^{s^i}.
$$
The map $\Psi$ is additive and preserves $V$. Since each application multiplies monomial degrees modulo $Q$ by a positive power of $s$, its $m$-fold iterate satisfies $\Psi^m(\F_q[x])\subseteq R$. Eq.~\eqref{eq:mill-U-2} gives
$$
L+\Psi(L)=A^Qx-A^Qx^q\in V.
$$
By additivity of $\Psi$ and invariance of $V$, we have
$$
L+(-1)^{m-1}\Psi^m(L)
=\sum_{i=0}^{m-1}(-1)^i\Psi^i(L+\Psi(L))
\in V.
$$
Since $\Psi^m(L)\in R\subseteq V$, it follows that $L\in V$.
Hence there exist unique polynomials $C,D_0,\ldots,D_{m-1}\in\F_q[x]$ such that
\begin{equation}\label{eq:L-decom}
L=C^Q+\sum_{j=0}^{m-1}D_j^Qx^{s^j}.
\end{equation}
Set $\beta_i=b_i^{1/Q}$. Comparing residue classes modulo $Q$ in Eq.~\eqref{eq:mill-U-2} and taking $Q$-th roots, we obtain $D_0=A$,
\begin{equation}\label{eq:D-relations}
D_j+\sum_{i=1}^j\beta_iD_{j-i}^{s^i}N^{s_i}=0,
\qquad 1\le j<m,
\end{equation}
and
\begin{equation}\label{eq:admissible}
0=x^{q/Q}A+C+\sum_{i=1}^m\beta_iC^{s^i}N^{s_i}
+\sum_{j=0}^{m-1}\sum_{i=m-j}^m
\beta_i x^{s^{i+j-m}}D_j^{s^i}N^{s_i}.
\end{equation}

Recall that $q/Q=s^\ell$. To construct a general covering system, we take
\[
(P_0,Q_0)=(A,C),
\qquad
(P_j,Q_j)=(0,D_j)\quad (1\le j<m).
\]
Indeed, Eq.~\eqref{eq:admissible} can be written as
$$
x^{q/Q}A+C+\sum_{i\ge1}\Delta_iN^{s_i}=0,
$$
where each $\Delta_i\in\F_q[x]$ is a sum of terms of the form $\alpha x^{g_i}Z_i$, with $\alpha\in\F_q$ and
\begin{itemize}
    \item $g_i=0$ or $g_i=s^{i+j-m}$ with $0\le i+j-m<i$;
    \item $Z_i\in\{P_0^{s^i},Q_0^{s^i},\ldots,P_{m-1}^{s^i},Q_{m-1}^{s^i}\}$.
\end{itemize}
Thus $g_i\in\mathcal D_i$ in both cases, so $\Delta_i\in\mathcal F^{(i)}$, where $\mathcal F=\{P_0,Q_0,\ldots,P_{m-1},Q_{m-1}\}$. The relations in Eq.~\eqref{eq:D-relations} have the same required form, with $P_j=0$ and $Q_j=D_j$. Hence condition (i) in Definition~\ref{def:type2*} holds.

For condition (ii), suppose that $\lambda$ is a common root of $A$ and $N$. Eq.~\eqref{eq:admissible} gives $C(\lambda)=0$, and Eq.~\eqref{eq:D-relations} gives $D_j(\lambda)=0$ for each integer $j$ with $1\le j<m$. Since $D_0=A$, Eq.~\eqref{eq:L-decom} yields $L(\lambda)=0$. Moreover, $L'(\lambda)=A(\lambda)^Q=0$, contradicting the fact that $L=\gcd(F,x^q-x)$ is squarefree. Thus $\gcd(A,N)=1$, so $P_0=A$ is nonzero at every root of $N$.

It remains to verify condition (iii) in Definition~\ref{def:type2*}. Let $a=\deg A$, $n=\deg N$, and $d=\max\{\deg C,\deg D_0,\ldots,\deg D_{m-1}\}$. We must show that $d+(s-1)n<q/Q=s^\ell$.
Since $b_m=a_m/a_0=1/a_0\ne0$ and $L$ is nonconstant, the term with $i=m$ on the left-hand side of Eq.~\eqref{eq:mill-U-2} has degree greater than that of every other term. Comparing degrees in Eq.~\eqref{eq:mill-U-2}, we obtain
\[
\deg L+s_mn=q/Q+a.
\]
Since $m>1$, we have $s_m=(s^m-1)/(s-1)>s-1$. Hence
$$
\deg L+(s-1)n\le q/Q+a.
$$
The summands in Eq.~\eqref{eq:L-decom} have degrees in distinct residue classes modulo $Q$. Hence $\deg L\ge Qd$ and, since $D_0=A$, $\deg L\ge Qa+1$. Together, these bounds imply $\deg L>d+a$, since $Q\ge2$. Therefore,
$$
d+(s-1)n<\deg L-a+(s-1)n\le q/Q=s^\ell.
$$
This proves condition (iii).
\end{proof}
Lemma~\ref{lem:nonfield-covering} and Theorem~\ref{thm:covering*} give $N=cH^{s-1}$ for some $c\in\F_q$ and $H\in\F_q[x]$. Setting $\alpha=c^Q$, we obtain
$$
F=\alpha L^{s-1}H^{Q(s-1)}
=\alpha(LH^Q)^{s-1}.
$$
Since $V_F=\mathcal U^{s-1}$, choose $u\in\F_q$ with $F(u)\ne0$. Both $F(u)$ and $((LH^Q)(u))^{s-1}$ are nonzero $(s-1)$-st powers in $\F_q$, so $\alpha=a^{s-1}$ for some $a\in\F_q^\times$. Thus $F=G^{s-1}$, where $G=aLH^Q$. By Lemma~\ref{lem:br-v}, we obtain $G\in\mathcal P(\mathcal U,q)$, completing this case.

\subsection{Proof of Theorem~\ref{thm:explicit}}
First, suppose that $F(x)=(A(P(x)))^v$, with $A,P,v$
as in Theorem~\ref{thm:explicit}. Set $d_P=\deg P$ and
$d_A=\deg A$, so that
$\deg F=vd_Ad_P$.

Consider the $\F_{p^k}$-linear map $\varphi:\F_{q'}\to\F_q$ given by $\varphi(z)=A(z)$. Since $A(x)$ divides $x^{q'}-x$, we have $\#\ker\varphi=d_A$. The rank-nullity theorem then gives $\#A(\F_{q'})=q'/d_A$.
Since $A(\F_{q'})$ is an $\F_{p^k}$-vector space and $v\mid p^k-1$, each nonzero element of $A(\F_{q'})^v$ has exactly $v$ preimages in $A(\F_{q'})$ under the map $z\mapsto z^v$. Consequently, $\#V_F=1+(q'/d_A-1)/v$. Since $1\le v<q'/(2d_A)$, we obtain $\#V_F>2$.

Set $t=(q'/d_A-1)/v$, which is an integer because $q'/d_A$ is a power of $p^k$ and $v\mid p^k-1$. Since $P$ is an MVSP with $\#V_P=q'$, we have $q'-1<q/d_P\le q'$. Therefore,
\[
t\le t+\frac{d_A-1}{vd_A}
=\frac{q'-1}{vd_A}
<\frac{q}{\deg F}
\le\frac{q'}{vd_A}
=t+\frac1v
\le t+1.
\]
Thus $\lceil q/\deg F\rceil=t+1=\#V_F$, proving that $F$ is an MVSP.

Conversely, let $F\in\F_q[x]$ be an MVSP with $\#V_F>2$. By Lemma~\ref{lem:br-1}, up to affine changes of variables and values, we may assume that $V_F=\mathcal U^\omega$ for some $\F_{p^k}$-vector subspace $\mathcal U\subseteq\F_q$ containing $1$ and some positive divisor $\omega$ of $p^k-1$, where $\F_{p^k}\subseteq\F_q$.

If $\mathcal U$ is a subfield of $\F_q$, Theorem~\ref{thm:conjecture} gives $F=H^t$, where $H$ is an MVSP with value set $\F_{p^e}\subseteq\F_q$ and $t\mid p^e-1$. Since $\#V_F=(p^e-1)/t+1>2$, we have $t<p^e/2$. We take $P=H$, $\F_{q'}=\F_{p^e}$, $A(x)=x$, and $v=t<q'/(2\deg A)$.

If $\mathcal U$ is not a subfield of $\F_q$, Theorem~\ref{thm:conjecture} gives $F=H^\omega$, where $H\in\F_q[x]$ is an MVSP with $V_H=\mathcal U$. By Theorem~\ref{thm:subspace} and Remark~\ref{rem:ad-div}, there exist $A,P\in\F_q[x]$ such that
$$
H(x)=A(P(x)),
$$
where $P$ is an MVSP with value set $\F_{q'}\subseteq\F_q$, and $A$ is an $\F_{p^k}$-additive polynomial dividing $x^{q'}-x$, with $\F_{p^k}\subseteq\F_{q'}$.
Thus
$$
F(x)=A(P(x))^\omega.
$$
We take $v=\omega$. Since $\#V_F=1+(q'/\deg A-1)/\omega>2$, we have $2\omega\le q'/\deg A-1$, hence $v<q'/(2\deg A)$. This completes the proof.






\section{An Application to Frobenius nonclassical curves}
\label{sec:fnc}

Let $q=p^n$ be a prime power. A geometrically irreducible plane curve $\mathcal C$ defined over $\F_q$ is called $\F_q$-Frobenius nonclassical if the $q$-power Frobenius image of every nonsingular point $P\in\mathcal C(\overline{\F}_q)$ lies on the tangent line at $P$. These curves play a
central role in the St\"ohr--Voloch theory \cite[Chapter~8]{HKT},\cite{SV}
and include important families with many rational
points \cite{BoHo,HV}.

For curves with separated variables, Borges \cite{Bor0} established a connection between Frobenius nonclassicality and minimal value set polynomials. More precisely, let $F,G\in\F_q[T]$ be nonconstant, assume that $F(X)-G(Y)\notin\F_q[X^p,Y^p]$, and suppose that all geometrically irreducible components of the curve $F(X)-G(Y)=0$ are defined over $\F_q$.
If these components are $\F_q$-Frobenius nonclassical, then $F$ and $G$ are MVSPs with the same value set. The converse holds whenever their common value set has more than two elements. Consequently, Theorem~\ref{thm:explicit}, together with the small-value-set cases treated in \cite{Bor0}, provides a description of all such pairs.

We illustrate this consequence for curves of the form $Y^m=f(X)$, whose absolute irreducibility can be checked directly from the root multiplicities of $f$. In this case, the description reduces to the sets $\mathcal P(\F_s,q)$, with $\F_s\subseteq\F_q$, explicitly classified in Theorem~\ref{thm:b-c}.

\begin{corollary}\label{cor:fnc-kummer}
Let $\mathcal C:Y^m=f(X)$ be an absolutely irreducible
plane curve over $\F_q$, where $q=p^n$ and $1<m<q-1$,
and set $\F_s=\F_p(f(\F_q))$.
Then $\mathcal C$ is $\F_q$-Frobenius nonclassical
if and only if
\[
m=\frac{q-1}{s-1}
\qquad\text{and}\qquad
f\in\mathcal P(\F_s,q).
\]
\end{corollary}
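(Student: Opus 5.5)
We use the criterion of Borges \cite{Bor0} recalled above, applied with $F(X)=f(X)$ and $G(Y)=Y^m$. First we check its hypotheses. Since $\mathcal C$ is absolutely irreducible, it has a single component. We may assume it is defined over $\F_q$, since $f\in\F_q[X]$. We also need $Y^m-f(X)\notin\F_q[X^p,Y^p]$, which we get by noting that $p\mid m$ together with $f\in\F_q[X^p]$ would make the curve a $p$-th power over $\overline{\F}_q$, contradicting absolute irreducibility.

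For the forward direction, assume $\mathcal C$ is $\F_q$-Frobenius nonclassical. Borges' result shows that $G(Y)=Y^m$ and $f$ are MVSPs with the same value set. Since $m<q-1$, we have $\gcd(m,q-1)\le m$, and so the value set of $Y^m$ is $\{0\}\cup\mathcal C_{r}$ with $r=(q-1)/\gcd(m,q-1)$. Minimality requires
\[
r+1=\left\lceil \tfrac{q}{m}\right\rceil .
\]
If $d=\gcd(m,q-1)<m$, then $m\ge 2d$, so $q/m\le q/(2d)<r$. This contradicts the displayed equality, hence $m\mid q-1$ and $r=(q-1)/m$. The equation $\lceil q/m\rceil=r+1$ then holds automatically.

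Next we identify $\{0\}\cup\mathcal C_r$ as $\mathcal U^v$ with $\mathcal U$ a field. Let $\F_{s'}=\F_p(\mathcal C_r)$, so $r\mid s'-1$ and $\{0\}\cup\mathcal C_r=\F_{s'}^{(s'-1)/r}$. By Theorem~\ref{thm:conjecture}(ii), $f=h^{(s'-1)/r}$ with $h\in\mathcal P(\F_{s'},q)$. To obtain $f\in\mathcal P(\F_s,q)$ with $m=(q-1)/(s-1)$, we need $\mathcal C_r=\F_{s'}^\times$, that is, $(s'-1)/r=1$. This is where absolute irreducibility is used. If $w=(s'-1)/r>1$, then $f=h^w$, and since $w\mid s'-1$ and $m(s'-1)/w\cdot$ divides appropriately, we have $\gcd(w,m)>1$. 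Indeed, $m=(q-1)/r=w(q-1)/(s'-1)$, so $w\mid m$. Then $Y^m-h^w$ factors as $\prod_{\zeta^w=1}(Y^{m/w}-\zeta h)$ over $\overline{\F}_q$, which is reducible. This contradiction gives $w=1$, so $\F_{s'}=\F_p(f(\F_q))=\F_s$, $r=s-1$, and $m=(q-1)/(s-1)$.

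For the converse, suppose $m=(q-1)/(s-1)$ and $f\in\mathcal P(\F_s,q)$. The value set of $Y^m$ is then $\{0\}\cup\mathcal C_{s-1}=\F_s$, of size $s=\lceil q/m\rceil$, so $Y^m$ is an MVSP with the same value set as $f$. If $s>2$, the converse part of Borges' criterion gives Frobenius nonclassicality. The case $s=2$ has a two-element value set and must be handled separately, using the small-value-set analysis of \cite{Bor0} (or a direct check via the Stöhr–Voloch criterion $(X^q-X)f'$-type identity from Theorem~\ref{MillsBor}).

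The main obstacle is this two-value case $s=2$, along with verifying Borges' hypotheses (the non-$p$-th-power condition and the condition that components are defined over $\F_q$). The core descent is immediate from Theorem~\ref{thm:conjecture}(ii).
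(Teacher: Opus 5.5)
Your argument is correct, but it is organized differently from the paper's. The paper's proof is a two-line reduction. Theorem~\ref{thm:explicit} removes the conditional hypothesis from \cite[Theorem~2.8]{BR}, which already states that $\mathcal C$ is $\F_q$-Frobenius nonclassical if and only if $m=(q-1)/(\#k-1)$ and $f\in\mathcal P(k,q)$ for some subfield $k$. It then only remains to identify $k=\F_p(f(\F_q))$.

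You instead rebuild that characterization directly, in effect reproving \cite[Theorem~2.8]{BR} with Theorem~\ref{thm:conjecture}(ii) in place of the conjecture:
\begin{enumerate}[(a)]
\item Borges' criterion from \cite{Bor0} gives that $f$ and $Y^m$ are MVSPs with a common value set.
\item The ceiling computation forces $m\mid q-1$ and $V_f=\{0\}\cup\mathcal C_r$ with $r=(q-1)/m$.
\item Theorem~\ref{thm:conjecture}(ii) gives $f=h^w$ with $w=(s'-1)/r$.
\item Since $w\mid m$ and $p\nmid w$, the factorization $Y^m-h^w=\prod_{\zeta^w=1}(Y^{m/w}-\zeta h)$ shows that absolute irreducibility forces $w=1$.
\end{enumerate}
Your version makes visible exactly where absolute irreducibility and the bound $m<q-1$ enter. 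The paper's version is shorter and leaves the verification of Borges' hypotheses to the cited result.

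A few corrections. First, the case $s=2$ that you call the main obstacle cannot occur. If $m=(q-1)/(s-1)$ and $m<q-1$, then $s-1>1$, so $s>2$ and the common value set $\F_s$ has more than two elements. No small-value-set analysis from \cite{Bor0} is needed, and the converse follows directly from Borges' criterion.

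Second, in the forward direction you should state explicitly that $r=(q-1)/m\ge 2$, again because $m<q-1$. This is the hypothesis $\#\mathcal U^v=r+1>2$ that Theorem~\ref{thm:conjecture} requires.

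Third, the sentence ``since $w\mid s'-1$ and $m(s'-1)/w\cdot$ divides appropriately, we have $\gcd(w,m)>1$'' is garbled. Replace it with the observation you give right after it: $m=w\cdot(q-1)/(s'-1)$ with $(s'-1)\mid(q-1)$, so $w\mid m$.
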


\begin{proof}
By Theorem~\ref{thm:explicit}, the characterization in \cite[Theorem~2.8]{BR} holds unconditionally: $\mathcal C$ is $\F_q$-Frobenius nonclassical if and only if there exists a subfield $k\subseteq\F_q$ such that $m=(q-1)/(\#k-1)$ and $f\in\mathcal P(k,q)$.

Suppose first that $\mathcal C$ is $\F_q$-Frobenius nonclassical. Since $f(\F_q)=k$, we have $k=\F_p(f(\F_q))=\F_s$. Hence $m=(q-1)/(s-1)$ and $f\in\mathcal P(\F_s,q)$. Conversely, if $m=(q-1)/(s-1)$ and $f\in\mathcal P(\F_s,q)$, then \cite[Theorem~2.8]{BR} applies with $k=\F_s$, proving that $\mathcal C$ is $\F_q$-Frobenius nonclassical.

\end{proof}


\begin{thebibliography}{99}
\bibitem{BBQ21} D.~Bartoli, H.~Borges, and L.~Quoos,
\newblock Rational functions with small value set,
\newblock J. Algebra \textbf{565} (2021), 675--690.

\bibitem{BSD} B.~J.~Birch and H.~P.~F.~Swinnerton-Dyer,
\newblock Note on a problem of Chowla,
\newblock Acta Arith. \textbf{5} (1959), 417--423.

\bibitem{Bor0} H.~Borges,
\newblock Frobenius nonclassical components of curves with separated variables,
\newblock J. Number Theory \textbf{159} (2016), 402--426.

\bibitem{Bor2} H.~Borges and R.~Concei\c{c}\~ao,
\newblock On the characterization of minimal value set polynomials,
\newblock J. Number Theory \textbf{133} (2013), 2021--2035.

\bibitem{Bor1} H.~Borges and R.~Concei\c{c}\~ao,
\newblock A new family of Castle and Frobenius nonclassical curves,
\newblock J. Pure Appl. Algebra \textbf{222} (2018), 994--1002.

\bibitem{BoHo} H.~Borges and M.~Homma,
\newblock Points on singular Frobenius nonclassical curves,
\newblock Bull. Braz. Math. Soc. (N.S.) \textbf{48} (2017), 93--101.

\bibitem{BR21} H.~Borges and L.~Reis,
\newblock Minimal value set polynomials over fields of size $p^3$,
\newblock Proc. Amer. Math. Soc. \textbf{149} (2021), no.~9, 3639--3649.

\bibitem{BR} H.~Borges and L.~Reis,
\newblock Minimal value set polynomials,
\newblock Adv. Math. \textbf{482} (2025), Part B, Article 110627.

\bibitem{Bor3} H.~Borges, A.~Sep\'ulveda, and G.~Tizzotti,
\newblock Weierstrass semigroup and automorphism group of the $X_{n,r}$ curves,
\newblock Finite Fields Appl. \textbf{36} (2015), 121--132.

\bibitem{Carlitz1955} L.~Carlitz,
\newblock On the number of distinct values of a polynomial with coefficients in a finite field,
\newblock Proc. Japan Acad. \textbf{31} (1955), 119--120.

\bibitem{CLMS} L.~Carlitz, D.~J.~Lewis, W.~H.~Mills, and E.~G.~Straus,
\newblock Polynomials over finite fields with minimal value sets,
\newblock Mathematika \textbf{8} (1961), 121--130.

\bibitem{Cohen} S.~D.~Cohen,
\newblock The distribution of polynomials over finite fields,
\newblock Acta Arith. \textbf{17} (1970), 255--271.

\bibitem{DXZ} X.~Deng, B.~Xu, and Q.~Zhu,
\newblock The small value set polynomials over finite fields and monodromy groups,
\newblock J. Number Theory \textbf{276} (2025), 139--161.

\bibitem{Elkies} N.~D.~Elkies,
\newblock Curves with many points,
\newblock preprint.

\bibitem{Garcia} A.~Garcia,
\newblock The curves $y^n=f(x)$ over finite fields,
\newblock Arch. Math. (Basel) \textbf{54} (1990), no.~1, 36--44.

\bibitem{Cald1} J.~Gomez-Calderon,
\newblock A note on polynomials with minimal value set over finite fields,
\newblock Mathematika \textbf{35} (1988), 144--148.

\bibitem{Cald2} J.~Gomez-Calderon and D.~J.~Madden,
\newblock Polynomials with small value sets over finite fields,
\newblock J. Number Theory \textbf{28} (1988), 167--188.

\bibitem{GPTU} M.~Giulietti, F.~Pambianco, F.~Torres, and E.~Ughi,
\newblock On complete arcs arising from plane curves,
\newblock Des. Codes Cryptogr. \textbf{25} (2002), 237--246.

\bibitem{GRZ} R.~M.~Guralnick, J.~E.~Rosenberg, and M.~E.~Zieve,
\newblock A new family of exceptional polynomials in characteristic two,
\newblock Ann. of Math. (2) \textbf{172} (2010), 1361--1390.

\bibitem{GW} R.~M.~Guralnick and D.~Wan,
\newblock Bounds for fixed point free elements in a transitive group and applications to curves over finite fields,
\newblock Israel J. Math. \textbf{101} (1997), 255--287.

\bibitem{GZ} R.~M.~Guralnick and M.~E.~Zieve,
\newblock Polynomials with $\mathrm{PSL}(2)$ monodromy,
\newblock Ann. of Math. (2) \textbf{172} (2010), 1315--1359.


\bibitem{HV} A.~Hefez and J.~F.~Voloch,
\newblock Frobenius non-classical curves,
\newblock Arch. Math. (Basel) \textbf{54} (1990), 263--273.


\bibitem{HKT} J.~W.~P.~Hirschfeld, G.~Korchm\'aros, and F.~Torres,
\newblock Algebraic Curves over a Finite Field,
\newblock Princeton University Press, Princeton and Oxford, 2008.

\bibitem{Nie} R.~Lidl and H.~Niederreiter,
\newblock Finite Fields,
\newblock 2nd ed., Encyclopedia of Mathematics and its Applications,
Cambridge University Press, Cambridge, 1997.

\bibitem{Mills} W.~H.~Mills,
\newblock Polynomials with minimal value sets,
\newblock Pacific J. Math. \textbf{14} (1964), 225--241.

\bibitem{Mullen} G.~L.~Mullen and D.~Panario,
\newblock Handbook of Finite Fields,
\newblock CRC Press, Boca Raton, 2013.

\bibitem{MWW} G.~L.~Mullen, D.~Wan, and Q.~Wang,
\newblock Value sets of polynomial maps over finite fields,
\newblock Q. J. Math. \textbf{64} (2013), 1191--1196.

\bibitem{MRW} A.~Muratovi\'c-Ribi\'c and Q.~Wang,
\newblock On a conjecture of polynomials with prescribed range,
\newblock Finite Fields Appl. \textbf{18} (2012), 728--737.

\bibitem{MunueraSepulvedaTorresCodes}
C.~Munuera, A.~Sep\'ulveda, and F.~Torres,
\newblock Algebraic geometry codes from Castle curves,
\newblock in \emph{Coding Theory and Applications},
Lecture Notes in Comput. Sci. \textbf{5228},
Springer, 2008, 117--127.

\bibitem{MunueraSepulvedaTorresCastle}
C.~Munuera, A.~Sep\'ulveda, and F.~Torres,
\newblock Castle curves and codes,
\newblock Adv. Math. Commun. \textbf{3} (2009), no.~4, 399--408.


\bibitem{SV} K.-O.~St\"ohr and J.~F.~Voloch,
\newblock Weierstrass points and curves over finite fields,
\newblock Proc. London Math. Soc. (3) \textbf{52} (1986), 1--19.

\bibitem{Toledano} R.~Toledano,
\newblock A link between minimal value set polynomials and tamely ramified towers of function fields over finite fields,
\newblock J. Algebra Appl. \textbf{21} (2022), 2250189.

\bibitem{Wan} D.~Wan,
\newblock A p-adic lifting lemma and its applications to permutation polynomials,
\newblock Finite Fields Appl. \textbf{3} (1997), 46--69.

\bibitem{WSC} D.~Wan, P.~Shiue, and C.~S.~Chen,
\newblock Value sets of polynomials over finite fields,
\newblock Proc. Amer. Math. Soc. \textbf{119} (1993), 711--717.
\end{thebibliography}
 \end{document}